\DeclareMathOperator{\Cone}{Cone} 
\DeclareMathOperator{\add}{add}  
\DeclareMathOperator{\Mod}{Mod} 
\DeclareMathOperator{\Hom}{Hom} 
\DeclareMathOperator{\rad}{rad} 
\DeclareMathOperator{\inj}{inj} 
\DeclareMathOperator{\tor}{tor}
\DeclareMathOperator{\im}{im} 
\DeclareMathOperator{\id}{id}
\DeclareMathOperator{\rep}{rep} 
\DeclareMathOperator{\olp}{olp} 
\DeclareMathOperator{\eolp}{eolp} 
\DeclareMathOperator{\GL}{GL} 
\DeclareMathOperator{\Iso}{Iso} 
\DeclareMathOperator{\mdeg}{mdeg} 
\DeclareMathOperator{\codim}{codim} 
\DeclareMathOperator{\Spec}{Spec} 
\newcommand{\Det}{{\det}}
\newcommand*{\punkte}{\dots\unkern}
\newcommand{\A}{\mathcal{A}} 
\newcommand{\Fa}{\mathcal{F}} 
\newcommand{\Pa}{\mathcal{P}} 
\newcommand{\Ha}{\mathcal{H}} 
\newcommand{\U}{\mathcal{U}} 
\newcommand{\Q}{\mathcal{Q}} 
\newcommand{\Orb}{\mathcal{O}} 
\newcommand{\N}{\mathcal{N}} 
\newcommand{\V}{\mathcal{V}} 
\newcommand{\quot}{/\!\!/}
\newcommand{\dimv}{\underline{\dim}}
\newcommand{\dfp}{\underline{d}_{P}} 
\newcommand{\df}{\underline{d}} 
\newcommand{\dfs}{\underline{d}_B}
\newtheorem{theorem}{Theorem}[section]
\newtheorem{lemma}[theorem]{Lemma}
\newtheorem{proposition}[theorem]{Proposition}
\newtheorem{corollary}[theorem]{Corollary}
\newtheorem{example}[theorem]{Example}
\begin{document}
\parindent0pt
\title{\bf Conjugation on varieties of nilpotent matrices}

\author{Magdalena Boos\\ Fachbereich C - Mathematik\\ Bergische Universit\"at Wuppertal\\ D - 42097 Wuppertal\\ boos@math.uni-wuppertal.de}
\date{}
\maketitle

\begin{abstract}
We consider the conjugation-action of an arbitrary upper-block parabolic subgroup of $\GL_n(\mathbf{C})$ on the variety of $x$-nilpotent complex matrices.  We obtain a criterion as to whether the action admits a finite number of orbits and specify a system of representatives for the orbits in the finite case of $2$-nilpotent matrices. Furthermore, we give a set-theoretic description of their closures and specify the minimal degenerations in detail for the action of the Borel subgroup. Concerning the action on the nilpotent cone, we obtain a generic normal form of the orbits which yields a $U$-normal form as well, here  $U$ is the standard unipotent subgroup. We describe generating (semi-) invariants for the Borel semi-invariant ring as well as for the $U$-invariant ring. The latter is described in more detail in terms of algebraic quotients by a special toric variety closely related.
\end{abstract}
\section{Introduction}\label{intro}
The study of algebraic group actions on affine varieties, especially the "vertical" study of orbits and their closures, and the "horizontal" study of parametric families of orbits and quotients, are a common topic in algebraic Lie theory.\\[1ex]
A well-known example is the study of the adjoint action of a reductive algebraic group on its Lie algebra and numerous variants thereof, in particular the conjugacy classes of complex (nilpotent) square matrices.\\[1ex]
 Algebraic group actions of reductive groups have particularly been discussed elaborately in connection with orbit spaces and more generally algebraic quotients, even though their application to concrete examples is far from being trivial. In case of a non-reductive group, even most of these results fail to hold true immediately.\\[1ex]
For example, Hilbert's theorem \cite{Hi} yields that for reductive groups, the invariant ring is finitely generated; and a criterion for algebraic quotients is valid \cite{Kr}. In 1958, though, M. Nagata \cite{Na1} constructed a counterexample of a not finitely generated invariant ring corresponding to a non-reductive algebraic group action, which answered Hilbert's fourteenth problem in the negative.\\[1ex]
 One exception are algebraic actions of unipotent subgroups that are induced by reductive groups, since the corresponding invariant ring is always finitely generated \cite{Kr}.\\[1ex]
We turn our main attention towards algebraic non-reductive group actions that are induced by the conjugation action of the general linear group $\GL_n$ over $\mathbf{C}$. For example, the standard parabolic subgroups $P$ (and, therefore, the standard Borel subgroup $B$) and the unipotent subgroup $U$ of $\GL_n$ are not reductive. 
It suggests itself to consider their action on the variety $\N_n^{(x)}$ of $x$-nilpotent matrices of square size $n$ via conjugation which we discuss in this work.\\[1ex]
A recent development in this field is A. Melnikov's study of the $B$-action on the variety of upper-triangular $2$-nilpotent matrices via conjugation \cite{Me1,Me2} motivated by Springer Theory. The detailed description of the orbits and their closures is given in terms of so-called link patterns; these are combinatorial objects visualizing the set of involutions in the symmetric group $S_n$.  In \cite{BoRe}, M. Reineke and the author generalize these results to the Borel-orbits of all $2$-nilpotent matrices and describe the minimal, disjoint degenerations corresponding to their orbit closure relations. Furthermore, L. Fresse describes singularities in the upper-triangular orbit closures by translating the group action to a certain group action on Springer fibres (see \cite{Fr}).\\[1ex]
Another recent outcome is L. Hille's and G. R\"ohrle's study of the action of $P$ on its unipotent radical $P_u$, and on the corresponding Lie algebra $\mathfrak{p}_u$ (see \cite{HiRoe}). They obtain a criterion which varifies that the number of orbits is finite if and only if the nilpotency class of $P_u$ is less or equal than $4$. This result is generalized to all classical groups $G$.\\[1ex]
Given a semi-simple Lie algebra $\mathfrak{g}$ and its Lie group $G$, D. Panyushev considers the adjoint action in \cite{Pan1} and shows that, given a nilpotent element $e\in\mathfrak{g}\backslash\{0\}$, the orbit $G.e$ is spherical if and only if $({\rm ad}_e)^4=0$.  The notion of sphericity translates to $G.e$ admitting only a finite number of Borel-orbits, due to M. Brion \cite{Br1}.\\[1ex]
In this work, we make use of a translation of the classification problem of the $P$-orbits in $\N_n^{(x)}$ to  the description of certain isomorphism classes of representations of a finite-dimensional algebra in Section \ref{transsect}. By making use of this translation, we describe the $P$-orbits in $\N_n^{(2)}$ as well as their closures in detail. Furthermore, we specify all minimal degenerations in Section \ref{x2}. This particular action admits only a finite number of orbits and, by considering $P$-actions on $\N_n^{(x)}$, we find a criterion as to whether the action admits a finite number of orbits in Section \ref{fincrit}.\\[1ex]
 By considering the nilpotent cone $\N$, we generalize the generic $B$-normal form given in \cite{Hal,BoRe} to arbitrary upper-block parabolic subgroups in Section \ref{gnfsect}. We describe $B$-semi-invariants that generate the ring of all $B$-semi-invariants and - as a direct consequence - find $U$-invariants that generate the $U$-invariant ring in Section \ref{generation}. The latter will be made use of to discuss the $U$-invariant ring in more detail in Section \ref{Uquot} by proving a quotient criterion and discussing a toric variety closely related to the algebraic quotient of $\N$ by $U$.\\[1ex]
{\bf Acknowledgments:} The author would like to thank M. Reineke for various valuable discussions concerning the methods and results of this work. Furthermore,  A. Melnikov, K. Bongartz and R. Tange are being thanked for inspirational thoughts and helpful remarks.

\section{Theoretical background}
We denote by $K\coloneqq \mathbf{C}$ the field of complex numbers and by $\GL_n\coloneqq\GL_n(K)$ the general linear group for a fixed integer $n\in\textbf{N}$ regarded as an affine variety. In order to fall back on certain results later on in the article, we include basic knowledge about \mbox{(semi-)} invariants and quotients \cite{Kr,Mu}; and about the representation theory of finite-dimensional algebras \cite{ASS}. 
\subsection{(Semi-) Invariants and Quotients}
 Let $G$ be a linear algebraic group and let $X$ be an affine $G$-variety. We denote by  $X(G)$ the \textit{character group} of $G$; a global section $f\in K[X]$ is called a \textit{$G$-semi-invariant of weight $\chi\in X(G)$} if  $f(g.x)=\chi(g)\cdot f(x)$ for all $x\in X$ and $g\in G$.  \\[1ex]
Let us denote the $\chi$-semi-invariant ring by
\[K[X]^{G}_{\chi}\coloneqq \bigoplus_{n\geq 0}K[X]^{G, n\chi},\]
which is a subring of $K[X]$ and naturally $\textbf{N}$-graded by the sets $K[X]^{G, n\chi}$, that is, by the semi-invariants of weight $n\chi$ (and of degree $n$). The semi-invariant ring corresponding to all characters is denoted by
\[K[X]^{G}_{*}\coloneqq \bigoplus_{\chi\in X(G)}K[X]^{G}_{\chi}.\]
A global section $f\in K[X]$ is called a  \textit{$G$-invariant} if $f(g.x)=f(x)$ for all $x\in X$ and  $g\in G$; the corresponding $G$-invariant ring is denoted by $K[X]^G$.
If the group $G$ is \textit{reductive}, that is, if every linear representation of $G$ can be decomposed into a direct sum of irreducible representations, D. Hilbert showed  that the invariant ring is finitely generated (see \cite{Hi}), even though it can be a problem of large difficulty to find generating invariants.\\[1ex] 
Let $X'$  be yet another affine $G$-variety and let $Y$ be an affine variety.\\[1ex]
A $G$-invariant morphism $\pi\colon X\rightarrow Y=:X\quot G$ is called an \textit{algebraic $G$-quotient of $X$} if 
it fulfills the universal property that for every $G$-invariant morphism $f\colon X\rightarrow Z$, there exists a unique morphism $\hat{f}\colon Y\rightarrow Z$, such that $f=\hat{f}\circ \pi$. If $K[X]^G$ is finitely generated, the variety $\Spec K[X]^G= X\quot G$ induces an algebraic quotient. Each fibre of an algebraic quotient contains exactly one closed orbit, which are, thus, being parametrized.\\[1ex]
In order to calculate an algebraic $G$-quotient of an affine variety, the criterion what follows (see \cite[II.3.4]{Kr}) can be of great help.
\begin{theorem}\label{criterion}
Let $G$ be a reductive group and let $\pi\colon X\rightarrow Y$ be a $G$-invariant morphism of varieties. If 
\begin{enumerate}
 \item $Y$ is normal,
\item $\codim_Y(\overline{Y\backslash \pi(X)})\geq 2$ (or $\pi$ is surjective if $\dim Y=1$) and
\item on a non-empty open subset $Y_0\subseteq Y$ the fibre $\pi^{-1}(y)$ contains exactly one closed orbit for each $y\in Y_0$,
\end{enumerate}
then $\pi$ is an algebraic $G$-quotient of $X$.
\end{theorem}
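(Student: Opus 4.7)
The plan is to reduce the statement to the canonical affine quotient. Since $G$ is reductive, Hilbert's theorem gives that $K[X]^G$ is finitely generated, so $\pi_0\colon X\to V:=\Spec K[X]^G$ exists and is itself an algebraic $G$-quotient. Because $\pi$ is $G$-invariant, the universal property of $\pi_0$ produces a unique morphism $\alpha\colon V\to Y$ with $\pi=\alpha\circ\pi_0$. The universal property for $\pi$ then follows formally from that of $\pi_0$ as soon as $\alpha$ is shown to be an isomorphism of varieties, so the entire proof reduces to establishing this fact.

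First I would verify that $\alpha$ is birational. Condition (3) forces $Y_0\subseteq\pi(X)=\alpha(V)$, so $\alpha$ is dominant. The closed $G$-orbits of $X$ are in canonical bijection with the points of $V$ via $\pi_0$, so the hypothesis that each $\pi^{-1}(y)$ for $y\in Y_0$ contains exactly one closed orbit translates to $|\alpha^{-1}(y)|=1$ for all $y\in Y_0$. Hence $\alpha$ is dominant and generically bijective between irreducibles of equal dimension, and therefore birational.

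The substantive step is to show that $\alpha^*\colon K[Y]\to K[V]$ is surjective, injectivity being automatic from dominance. Given $f\in K[V]$, identify it through $K(V)\cong K(Y)$ with a rational function $\hat{f}\in K(Y)$ and consider its pole locus, which, if non-empty, is a union of prime divisors of $Y$. For any prime divisor $D\subseteq Y$ that meets $\pi(X)$ in a dense open subset, the birational morphism $\alpha$ pulls $D$ back to a prime divisor $D'\subseteq V$ dominating $D$, and the inclusion $\mathcal{O}_{Y,D}\hookrightarrow\mathcal{O}_{V,D'}$ of DVRs inside $K(Y)=K(V)$ forces $v_D(\hat{f})\geq 0$ from $v_{D'}(f)\geq 0$ (which holds since $f$ is globally regular on $V$). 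Every polar divisor of $\hat{f}$ must therefore lie inside $\overline{Y\setminus\pi(X)}$. But by condition (2) the latter has codimension $\geq 2$ and hence contains no divisors, so the pole locus is empty; normality of $Y$ and Hartogs' extension for normal varieties now promote $\hat{f}$ to a regular function $g\in K[Y]$ with $\alpha^*(g)=f$. The subcase $\dim Y=1$, in which the codimension hypothesis is vacuous, is covered by the alternative that $\pi$ is surjective, giving $\alpha(V)=Y$ directly and making the extension step trivial.

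The principal obstacle is the pole-locus analysis in the third paragraph: one must carefully pull divisors back through the possibly non-finite birational morphism $\alpha$ and control the relationship between the valuations $v_D$ and $v_{D'}$ on the two function fields. It is precisely here that normality of $Y$ plays its essential role, both to ensure the DVR structure of $\mathcal{O}_{Y,D}$ along codimension-one primes and to enable the Hartogs-type extension from the complement of a codimension $\geq 2$ set.
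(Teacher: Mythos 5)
The paper does not actually prove this statement: it is quoted verbatim from \cite[II.3.4]{Kr} (Igusa's criterion), so there is no in-paper argument to compare against. Your reconstruction is the standard proof of that result and is essentially correct: factoring $\pi$ through the affine GIT quotient $\pi_0\colon X\to V=\Spec K[X]^G$, using condition (3) together with the bijection between points of $V$ and closed orbits (and surjectivity of $\pi_0$) to get that $\alpha$ is birational, and then using conditions (1) and (2) to extend any $f\in K[V]$ across all of $Y$ via the codimension-one/Hartogs argument. Since everything in the paper's setting is affine, the isomorphism $\alpha^*\colon K[Y]\to K[V]$ does give that $\alpha$ is an isomorphism, and the universal property transfers formally.

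Two small points are worth tightening. First, you assert that $\mathcal{O}_{V,D'}$ is a DVR; this would require $V=\Spec K[X]^G$ to be normal along $D'$, which is not among the hypotheses ($X$ itself is not assumed normal). The argument survives without it: a component $D'$ of $\alpha^{-1}(D)$ dominating $D$ exists because $D\cap\alpha(V)$ is dense in $D$, and then $\mathcal{O}_{Y,D}\subseteq\mathcal{O}_{V,D'}\subsetneq K(V)$ forces $\mathcal{O}_{Y,D}=\mathcal{O}_{V,D'}$ simply because a DVR admits no proper intermediate ring below its fraction field; hence $f\in K[V]\subseteq\mathcal{O}_{V,D'}=\mathcal{O}_{Y,D}$, which is all you need. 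Second, it is worth noting explicitly (as a sanity check on the statement) that your conclusion forces $\pi=\alpha\circ\pi_0$ to be surjective a posteriori even when only the codimension-two hypothesis in (2) is assumed; this is a known feature of Igusa's criterion, not an inconsistency.
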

In case $G$ is not reductive, there are counterexamples of only infinitely generated invariant rings (see \cite{Na1}). One exception are actions of unipotent subgroups induced by reductive group actions, which are discussed in \cite[III.3.2]{Kr}.
\begin{lemma}\label{Uinvfin}
 Let $U$ be a unipotent subgroup of $G$; the action of $G$ restricts to an action of $U$ on $X$. Then the invariant ring $K[X]^U$ is finitely generated as a $K$-algebra.
\end{lemma}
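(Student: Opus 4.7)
The strategy is to apply the classical transfer principle: express $K[X]^U$ as a ring of $G$-invariants on an auxiliary variety, and then invoke Hilbert's finite generation theorem, which is available since $G$ is reductive in the ambient set-up.

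Concretely, I would consider $G\times X$ equipped with two commuting actions: the $G$-action $g\cdot(h,x) = (hg^{-1}, gx)$, for which the orbit map $(h,x)\mapsto hx$ realizes the geometric quotient $(G\times X)\quot G$ as $X$ itself, together with the $U$-action $u\cdot(h,x) = (uh, x)$ by left translation on the first factor. Under the identification $K[G\times X]^G \cong K[X]$ the latter action restricts to the given $U$-action on $X$, so taking invariants in the two possible orders produces
\[
K[X]^U \;\cong\; \bigl(K[G\times X]^U\bigr)^G \;\cong\; \bigl(K[U\backslash G]\otimes K[X]\bigr)^G,
\]
where $K[U\backslash G]$ denotes the ring of left-$U$-invariant functions on $G$, endowed with the residual $G$-action by right translation.

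The decisive nonformal ingredient is then the finite generation of $K[U\backslash G]$ for a unipotent $U$ inside a reductive $G$. This is classical for maximal unipotent subgroups (Hadziev), with the extension to arbitrary unipotent subgroups of reductive groups due to Grosshans and Pommerening. Granting it, $K[U\backslash G]\otimes K[X]$ is a finitely generated $K$-algebra carrying a rational action of the reductive group $G$, and Hilbert's theorem then supplies a finite set of $G$-invariants that generate $(K[U\backslash G]\otimes K[X])^G \cong K[X]^U$ as a $K$-algebra.

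The one substantial step is precisely this black-box finite generation of $K[U\backslash G]$, which is specific to unipotent groups and is typically reduced, via filtrations of $U$ by normal subgroups with $\mathbf{G}_a$-quotients, to Weitzenb\"ock-type results. The transfer identification and the subsequent application of Hilbert's theorem are formal once the black box is in place, and the whole argument is carried out in detail in \cite{Kr}.
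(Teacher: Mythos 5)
Your argument is correct and is essentially the proof the paper points to: the lemma is quoted from \cite[III.3.2]{Kr} without proof, and the intended argument there is precisely your transfer principle $K[X]^U \cong (K[G]^U\otimes K[X])^G$ combined with finite generation of $K[G]^U$ (Hadziev/Grosshans--Pommerening) and Hilbert's theorem for the reductive group $G$. Indeed, the paper itself rehearses exactly this chain of isomorphisms at the start of Section \ref{Uquot}, so your route coincides with the one used throughout the article.
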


\subsection{Toric varieties}
Since our considerations will involve the notion of a toric variety, we discuss it briefly. For more information on the subject, the reader is referred to \cite{Fu}.\\[1ex]
A \textit{toric variety} is an irreducible variety $X$ which containes $(K^*)^n$ as an open subset, such that the action of $(K^*)^n$ on itself extends to an action of $(K^*)^n$ on $X$.\\[1ex]
Let $N$ be a \textit{lattice}, that is, a free abelian group $N$ of finite rank. By $M:=\Hom_{\textbf{Z}}(N,\textbf{Z})$ we denote the \textit{dual lattice}, together with the induced dual pairing $\langle\_,\_\rangle$.  Consider the vector space $N_{\textbf{R}}:=N\otimes_{\textbf{Z}}\textbf{R}\cong \textbf{R}^n$.\\[1ex]
A subset $\sigma\subseteq N_{\textbf{R}}$ is called a \textit{strongly convex rational polyhedral cone} if $\sigma\cap (-\sigma) =\{0\}$ and if there is a finite set $S\subseteq N$ that generates $\sigma$, that is,
\[\sigma = \Cone(S):= \left\lbrace \sum\limits_{s\in S} \lambda_s\cdot s \mid \lambda_s\geq 0 \right\rbrace. \]
Given a strongly convex rational polyhedral cone $\sigma$, we define its dual by
\[\sigma^{\vee}:=\{m\in \Hom_{\textbf{R}}(\textbf{R}^n,\textbf{R})\mid \langle m,v\rangle\geq 0~\textrm{for~all}~v\in\sigma\}\]
and its corresponding additive semigroup by $S_{\sigma}:=\sigma^{\vee}\cap M$, which is finitely generated due to Gordon's lemma (see \cite{Fu}). Note that  if $\sigma$ is a maximal dimensional strongly convex rational polyhedral cone, then $\sigma^{\vee}$ is one as well. We associate to it the semigroup algebra $KS_{\sigma}$ and obtain an affine toric variety $\Spec KS_{\sigma}$. 
\begin{lemma}\label{toricco}
 An affine toric variety $X$ is isomorphic to $\Spec KS_{\sigma}$ for some strongly convex rational polyhedral cone $\sigma$ if and only if $X$ is normal.
\end{lemma}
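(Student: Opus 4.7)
The statement is the standard algebraic--combinatorial characterization of normal affine toric varieties, and my plan follows the argument in \cite{Fu}. The key dictionary is that normality of $\Spec R$ corresponds to $R$ being integrally closed in its field of fractions, while on the semigroup side it corresponds to the semigroup being \emph{saturated} in $M$, i.e.\ $km\in S$ for some $k\geq 1$ forces $m\in S$.

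For the implication ``$\Spec KS_\sigma$ is normal'', the plan is to write $\sigma$ as an intersection of its supporting half-spaces dual to the facets of $\sigma^{\vee}$: $\sigma=\bigcap_i \tau_i$. Dually this yields
\[S_\sigma=\sigma^{\vee}\cap M=\bigcap_i\bigl(\tau_i^{\vee}\cap M\bigr),\]
and each $K[\tau_i^{\vee}\cap M]$ is isomorphic to a ring of the form $K[x_1^{\pm 1},\ldots,x_{n-1}^{\pm 1},x_n]$, which is a localization of a polynomial ring and hence normal. An intersection of normal subrings of $K[M]$ sharing the common fraction field $K(M)$ is again normal, which yields normality of $KS_\sigma$.

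For the reverse implication, I would start from an arbitrary normal affine toric variety $X$, use the open embedding of the torus $T=(K^*)^n\hookrightarrow X$ to get an inclusion $K[X]\hookrightarrow K[T]=K[M]$, and exploit the induced $T$-action to obtain the weight-space decomposition
\[K[X]=\bigoplus_{m\in S} K\chi^m\]
for a submonoid $S\subseteq M$; each weight space is one-dimensional because $K[X]$ is a domain. Finite generation of $K[X]$ transfers to $S$, and the density of $T$ in $X$ forces $S$ to generate $M$ as a group. Normality of $K[X]$ then implies that $S$ is saturated: if $km\in S$, then $\chi^m$ satisfies the monic equation $y^k-\chi^{km}=0$ over $K[X]$ inside $K(M)$, hence lies in $K[X]$. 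Finally, set $\sigma:=\{v\in N_{\mathbf{R}}\mid\langle m,v\rangle\geq 0\text{ for all }m\in S\}$ and check $S=\sigma^{\vee}\cap M$ by convex duality.

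The main obstacle is this last step: passing from a finitely generated saturated submonoid $S\subseteq M$ spanning $M$ to the explicit description $S=\sigma^{\vee}\cap M$ with $\sigma$ strongly convex. This requires convex duality (a closed rational polyhedral cone equals its bidual), Gordan's lemma to control lattice points, and a verification that strong convexity of $\sigma$ is equivalent to $S$ containing no nontrivial units, which is in turn forced by $S$ spanning $M$ together with the grading of $K[X]$. Once this convex-geometric identification is in place, the two implications fall out as described.
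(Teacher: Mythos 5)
The paper gives no proof of this lemma: it is quoted as standard background with a pointer to \cite{Fu}, so there is no in-paper argument to compare against. Your outline is precisely the classical argument from that reference (normality of $KS_{\sigma}$ by writing $S_{\sigma}$ as an intersection of semigroups attached to half-spaces, whose algebras are localizations of polynomial rings; the converse via the weight-space decomposition, saturation from integral closedness, and convex duality), and as a plan it is sound. Two local slips should be repaired before it could stand as a proof. First, the duality in the forward direction is stated backwards: if $\sigma=\bigcap_i\tau_i$ with $\tau_i$ half-spaces of $N_{\mathbf{R}}$, then each $\tau_i^{\vee}$ is a \emph{ray} of $M_{\mathbf{R}}$ and $\bigl(\bigcap_i\tau_i\bigr)^{\vee}$ is the cone \emph{generated by} the $\tau_i^{\vee}$, not their intersection; what you want is to write $\sigma$ as the sum of its edges $\rho_i$, so that $\sigma^{\vee}=\bigcap_i\rho_i^{\vee}$ with each $\rho_i^{\vee}$ a rational half-space and $K[\rho_i^{\vee}\cap M]\cong K[x_1,x_2^{\pm1},\ldots,x_n^{\pm1}]$ (using that a primitive generator of $\rho_i$ extends to a $\mathbf{Z}$-basis of $N$). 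Second, strong convexity of $\sigma$ is \emph{not} equivalent to $S$ containing no nontrivial units; that condition would characterize strong convexity of $\sigma^{\vee}$, and it can fail (for $X$ the torus itself one has $S=M$, yet $\sigma=\{0\}$ is strongly convex). What you actually need is that $\Cone(S)=\sigma^{\vee}$ is full-dimensional, and this already follows from your observation that $S$ generates $M$ as a group. A smaller point: the weight spaces of $K[X]$ are one-dimensional because $K[X]$ embeds $T$-equivariantly into $K[M]$, whose weight spaces are one-dimensional, not because $K[X]$ is a domain. With these corrections the argument is the standard and complete one.
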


\subsection{Representation theory of finite-dimensional algebras}\label{repsofalgebras}
A \textit{finite quiver} $\Q$ is a directed graph $\Q=(\Q_0,\Q_1,s,t)$, such that $\Q_0$ is a finite set of \textit{vertices} and $\Q_1$ is a finite set of \textit{arrows}, whose elements are written as $\alpha\colon s(\alpha)\rightarrow t(\alpha)$.
The \textit{path algebra} $K\Q$ is defined as the $K$-vector space with a basis consisting of all paths in $\Q$, that is, sequences of arrows $\omega=\alpha_s\punkte\alpha_1$, such that $t(\alpha_{k})=s(\alpha_{k+1})$ for all $k\in\{1,\punkte,s-1\}$; formally included is a path $\varepsilon_i$ of length zero for each $i\in \Q_0$ starting and ending in $i$. The multiplication is defined by
\begin{center}
 $\omega\cdot\omega'=\left\{\begin{array}{ll}\omega\omega',&~\textrm{if}~t(\beta_t)=s(\alpha_1);\\
0,&~\textrm{otherwise.}\end{array}\right.$\end{center}
where $\omega\omega'$ is the  concatenation of paths $\omega$ and $\omega'$.\\[1ex]
We define the \textit{radical} $\rad(K\Q)$ of $K\Q$ to be the (two-sided) ideal generated by all paths of positive length; then an arbitrary ideal $I$ of $K\Q$ is called \textit{admissible} if there exists an integer $s$ with $\rad(K\Q)^s\subset I\subset\rad(K\Q)^2$.\\[1ex]
A finite-dimensional \textit{$K$-representation} of $\Q$ is a tuple \[((M_i)_{i\in \Q_0},(M_\alpha\colon M_i\rightarrow M_j)_{(\alpha\colon i\rightarrow j)\in \Q_1}),\] where the $M_i$ are $K$-vector spaces, and the $M_{\alpha}$ are $K$-linear maps.\\[1ex]
 A \textit{morphism of representations} $M=((M_i)_{i\in \Q_0},(M_\alpha)_{\alpha\in \Q_1})$ and
 \mbox{$M'=((M'_i)_{i\in \Q_0},(M'_\alpha)_{\alpha\in \Q_1})$} consists of a tuple of $K$-linear maps $(f_i\colon M_i\rightarrow M'_i)_{i\in \Q_0}$, such that $f_jM_\alpha=M'_\alpha f_i$ for every arrow $\alpha\colon i\rightarrow j$ in $\Q_1$.\\[1ex]
For a representation $M$ and a path $\omega$ in $\Q$ as above, we denote $M_\omega=M_{\alpha_s}\cdot\punkte\cdot M_{\alpha_1}$. A representation $M$ is called \textit{bound by $I$} if $\sum_\omega\lambda_\omega M_\omega=0$ whenever $\sum_\omega\lambda_\omega\omega\in I$.\\[1ex]
These definitions yield certain categories as follows: We denote by $\rep_K(\Q)$ the abelian $K$-linear category of all representations of $\Q$ and by   $\rep_K(\Q,I)$ the category of representations of $\Q$ bound by $I$; the latter is equivalent to the category of finite-dimensional $K\Q/I$-representations.\\[1ex]
Given a representation $M$ of $\Q$, its \textit{dimension vector} $\dimv M\in\mathbf{N}\Q_0$ is defined by $(\dimv M)_{i}=\dim_k M_i$ for $i\in \Q_0$. Let us fix a dimension vector $\df\in\mathbf{N}\Q_0$, then we denote by $\rep_K(\Q,I)(\df)$ the full subcategory of $\rep_K(\Q,I)$ which consists of representations of dimension vector $\df$.\\[1ex]
For certain classes of finite-dimensional algebras, a convenient tool for the classification of the indecomposable representations is the \textit{Auslander-Reiten quiver} $\Gamma(\Q,I)$ of $\rep_K(\Q,I)$. Its vertices $[M]$ are given by the isomorphism classes of indecomposable representations of $\rep_K(\Q,I)$; the arrows between two such vertices $[M]$ and $[M']$ are parametrized by a basis of the space of so-called irreducible maps $f\colon M\rightarrow M'$.\\[1ex] 
One standard technique to calculate the Auslander-Reiten quiver is the \textit{knitting process} (see, for example, \cite[IV.4]{ASS}).
 In some cases, the Auslander-Reiten quiver $\Gamma(\Q,I)$ can be calculated by using \textit{covering techniques} (see \cite{Ga3} or \cite{BoGa}).\\[1ex]
By defining the affine space $R_{\df}(\Q):= \bigoplus_{\alpha\colon i\rightarrow j}\Hom_K(K^{d_i},K^{d_j})$, one realizes that its points $m$ naturally correspond to representations $M\in\rep_K(\Q)(\df)$ with $M_i=K^{d_i}$ for $i\in \Q_0$. 
 Via this correspondence, the set of such representations bound by $I$ corresponds to a closed subvariety $R_{\df}(\Q,I)\subset R_{\df}(\Q)$.\\[1ex]
The algebraic group $\GL_{\df}=\prod_{i\in \Q_0}\GL_{d_i}$ acts on $R_{\df}(\Q)$ and on $R_{\df}(\Q,I)$ via base change, furthermore the $\GL_{\df}$-orbits $\Orb_M$ of this action are in bijection to the isomorphism classes of representations $M$ in $\rep_K(\Q,I)(\df)$.
There is an induced $\GL_{\df}$-action on $K[R_{\df}(\Q)]$ which yields the natural notion of a semi-invariant. \\[1ex]
Let us denote by $\add\Q$ the \textit{additive category} of $\Q$ with objects $O(i)$ corresponding to the vertices $i\in\Q_0$ and morphisms induced by the paths in $\Q$. Since every representation $M\in \rep_K(\Q)$ can naturally be seen as  a functor from $\add \Q$ to $\Mod K$, we denote this functor by $M$ as well.
 Let $\phi\colon\bigoplus_{i=1}^n O(i)^{x_i}\rightarrow \bigoplus_{i=1}^n O(i)^{y_i}$ be an arbitrary morphism in $\add \Q$ and consider $\df\in \mathbf{N}\Q_0$, such that $\sum_{i\in \Q_0}x_i\cdot \df_i=\sum_{i\in \Q_0}y_i\cdot \df_i$. An induced so-called determinantal semi-invariant is given by
\[ f_{\phi}\colon~R_{\df}(\Q)\rightarrow K ;~~ m~~\mapsto \det(M(\phi)), \]
where $m\in R_{\df}(\Q)$ and $M\in  \rep_K(\Q)(\df)$ are related via the above mentioned correspondence.
 The following theorem (see \cite{SvB}) is due to A. Schofield and M. van den Bergh.
\begin{theorem}\label{Schosemi}
 The semi-invariants in $K[R_{\df}(\Q)]^{\GL_{\df}}_*$ are spanned by the determinantal semi-invariants $f_{\phi}$.
\end{theorem}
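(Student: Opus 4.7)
The plan is to reduce the statement to the First Fundamental Theorem (FFT) of invariant theory for $\GL_n$ via Cauchy's formula. A character of $\GL_{\df}=\prod_{i\in\Q_0}\GL_{d_i}$ amounts to a tuple of integers $\chi=(c_i)_{i\in\Q_0}$, and a direct check in coordinates shows that $f_\phi$ transforms by $\prod_i \det(g_i)^{y_i-x_i}$ under base change by $g=(g_i)\in\GL_{\df}$; hence $f_\phi$ is a semi-invariant of weight $c_i = y_i - x_i$ whenever the balance condition $\sum_i x_i d_i = \sum_i y_i d_i$ holds. This gives the easy inclusion, namely that every determinantal $f_\phi$ is indeed a semi-invariant.

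For the converse, I would first decompose $K[R_{\df}(\Q)]=\bigotimes_{\alpha\colon i\to j}K[\Hom_K(K^{d_i},K^{d_j})]$ via Cauchy's formula; each factor splits as $\bigoplus_\lambda S_\lambda(K^{d_i})^*\otimes S_\lambda(K^{d_j})$, where $S_\lambda$ denotes the Schur functor. Collecting the $\GL_{d_i}$-isotypic components vertex by vertex identifies the semi-invariants of weight $\chi$ with the $\mathrm{SL}_{\df}$-invariant multilinear components homogeneous of degree $c_i$ under the $i$-th factor of the centre. The main step is then surjectivity: any $f\in K[R_{\df}(\Q)]^{\GL_{\df}}_\chi$ must be shown to lie in the span of the $f_\phi$. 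I would prove this by a doubling trick: for each balanced pair $(x_i, y_i)$ with $y_i - x_i = c_i$, consider the enlarged representation space $R_{\df}(\Q) \oplus \bigoplus_i \Hom_K(K^{x_i}, K^{d_i}) \oplus \bigoplus_i \Hom_K(K^{d_i}, K^{y_i})$ with its natural $\GL_{\df}$-action; the FFT for $\GL_n$ implies that the $\GL_{\df}$-invariants of this enlarged space are generated by determinants of square block matrices obtained by concatenating $\Q$-paths with the auxiliary maps. Extracting the multi-homogeneous component of degree $(1,\ldots,1)$ in the auxiliary factors recovers precisely $f_\phi$ for morphisms $\phi\in\Hom_{\add\Q}(\bigoplus O(i)^{x_i},\bigoplus O(i)^{y_i})$; letting $(x_i,y_i)$ vary over all balanced choices then exhausts $K[R_{\df}(\Q)]^{\GL_{\df}}_\chi$.

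The main obstacle is the surjectivity argument in full generality: one must verify that the multi-homogeneous extraction commutes with the passage to $\GL_{\df}$-invariants, and that paths in $\Q$ together with the auxiliary maps genuinely account for every $\GL_{d_i}$-invariant polynomial expression on the enlarged space. This is the substantive content of the Schofield-van den Bergh theorem and depends crucially on the FFT description of $\GL_n$-invariants of $V^{\oplus a}\oplus (V^*)^{\oplus b}$ as generated by the natural evaluation pairings; without this, one only obtains a weaker statement covering the "projective" or "injective" chamber of characters rather than all of $X(\GL_{\df})$.
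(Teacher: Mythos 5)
The paper does not prove this statement at all: it is quoted from Schofield--van den Bergh \cite{SvB} and used as a black box, so there is no internal argument to compare yours against. Your sketch heads down the classical-invariant-theory route (Cauchy decomposition plus the First Fundamental Theorem, essentially the Domokos--Zubkov style derivation), which is a legitimate way to establish the result, and your weight computation $c_i=y_i-x_i$ for $f_\phi$ is correct.

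As a proof, however, the proposal has genuine gaps, which you partly acknowledge. First, the FFT for $\GL_n$ acting on $V^{\oplus a}\oplus(V^*)^{\oplus b}$ produces the evaluation pairings (and, for quivers with oriented cycles, traces of cyclic words) as generators --- it does not produce determinants. Determinants of $d_i$ vectors enter only through the bracket generators of the $\mathrm{SL}_{d_i}$-invariants, i.e.\ through the nontrivial $\det^{\pm1}$-isotypic components; so the claim that the invariants of the enlarged space ``are generated by determinants of square block matrices'' is not what the FFT gives, and the passage from products of pairings, brackets and traces to a single block determinant $\det M(\phi)$ is exactly the combinatorial heart of the theorem, which you defer. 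Second, the quiver relevant to this paper, $\Q_p$, has a loop at the vertex $p$, so weight-zero semi-invariants include conjugation invariants arising from traces of cycles; these have degree zero in your auxiliary factors and are invisible to the multidegree-$(1,\punkte,1)$ extraction, and expressing them as linear combinations of the $f_\phi$ needs a separate argument (e.g.\ expanding $\det(N-\lambda)$ in $\lambda$). Since you explicitly flag the surjectivity step as ``the substantive content of the Schofield--van den Bergh theorem,'' the proposal is an outline of a strategy rather than a proof; given that the paper itself simply cites the result, either cite \cite{SvB} as well or carry out the antisymmetrization and cycle steps in full.
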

 \section{Translation to a representation-theoretic setup}\label{transsect}
We fix a parabolic subgroup $P$ of $\GL_n$ of block sizes $(b_1,\punkte,b_p)$.\\[1ex] 
We define  $\Q_p$ to be the quiver
\begin{center}\begin{tikzpicture}
\matrix (m) [matrix of math nodes, row sep=0.01em,
column sep=1.5em, text height=0.5ex, text depth=0.1ex]
{\Q_p\colon & \bullet & \bullet &  \bullet & \cdots  & \bullet & \bullet  & \bullet \\ & \mathrm{1} & \mathrm{2} &  \mathrm{3} & &   \mathrm{p-2} &  \mathrm{p-1}  & \mathrm{p} \\ };
\path[->]
(m-1-2) edge node[above=0.05cm] {$\alpha_1$} (m-1-3)
(m-1-3) edge  node[above=0.05cm] {$\alpha_2$}(m-1-4)
(m-1-6) edge  node[above=0.05cm] {$\alpha_{p-2}$}(m-1-7)
(m-1-7) edge node[above=0.05cm] {$\alpha_{p-1}$} (m-1-8)
(m-1-8) edge [loop right] node{$\alpha$} (m-1-8);\end{tikzpicture}\end{center} 
and $\A(p,x)\coloneqq K \Q_p/I_x$  to be the finite-dimensional algebra, where $I_x\coloneqq (\alpha^x)$ is an admissible ideal. We fix the dimension vector 
\[\dfp\coloneqq(d_1,\punkte,d_p)\coloneqq(b_1,b_1+b_2, \punkte, b_1+...+b_p)\]
 and formally set $b_0=0$. As explained in Section \ref{repsofalgebras}, the algebraic group $\GL_{\dfp}$ acts on $R_{\dfp}(\Q_p,I_x)$; the orbits of this action are in bijection with the isomorphism classes of representations in $\rep_{K}(\Q_p,I_x)(\dfp)$.\\[1ex]
Let us define $\rep_{K}^{\inj}(\Q_p,I_x)(\dfp)$ to be the full subcategory of $\rep_{K}(\Q_p,I_x)(\dfp)$ consisting of representations $((M_i)_{1\leq i\leq p},(M_{\rho})_{\rho\in \Q_1})$, such that $M_{\rho}$ is injective if $\rho=\alpha_i$ for every $i\in\{1,\punkte, p-1\}$. Corresponding to this subcategory, there is an open subset $R_{\dfp}^{\inj}(\Q_p,I_x)\subset R_{\dfp}(\Q_p,I_x)$, which is stable under the $\GL_{\dfp}$-action.\\[1ex]
We denote $\Orb_M:=\GL_{\dfp}.m$ if $m\in R_{\dfp}^{\inj}(\Q_p,I_x)$ corresponds to $M\in\rep^{\inj}(\Q_p,I_x)(\dfp)$ as in Section \ref{repsofalgebras}.
In order to describe the orbit closure $\overline{\Orb_M}$, we denote $M\leq_{\deg}M'$ if $\Orb_{M'}\subset\overline{\Orb_M}$ in $R_{\df}(\Q,I)$ for a representation $M'$ and say that $M'$ is a \textit{degeneration} of $M$. 
 Of course, in order to describe all degenerations, it is sufficient to calculate all \textit{minimal degenerations} $M<_{\mdeg}M'$, that is, degenerations $M<_{\deg}M'$, such that if $M\leq_{\deg}L\leq_{\deg}M'$, then $M\cong L$ or $M'\cong L$.\\[1ex]
The following lemma is a slightly generalized version of \cite[Lemma 3.2]{BoRe}. The proof is similar, though.
\begin{lemma} \label{bijection}
There is an isomorphism $R_{\dfp}^{\inj}(\Q_p,I_x)\cong \GL_{\dfp}\times^{P}\N_n^{(x)}$. Thus, there exists a bijection $\Phi$ between the set of $P$-orbits in $\N_n^{(x)}$ and the set of $\GL_{\dfp}$-orbits in $R_{\dfp}^{\inj}(\Q_p,I_x)$, which sends an orbit $P.N\subseteq \N^{(x)}$ to the isomorphism class of the representation
\begin{center}\begin{tikzpicture}
\matrix (m) [matrix of math nodes, row sep=0.05em,
column sep=2em, text height=1.5ex, text depth=0.2ex]
{ K^{d_1} & K^{d_2} & K^{d_3} & \cdots  & K^{d_{p-2}} & K^{d_{p-1}}  & K^{n}\\ };
\path[->]
(m-1-1) edge node[above=0.05cm] {$\epsilon_1$} (m-1-2)
(m-1-2) edge  node[above=0.05cm] {$\epsilon_2$}(m-1-3)
(m-1-3) edge  (m-1-4)
(m-1-4) edge  (m-1-5)
(m-1-5) edge  node[above=0.05cm] {$\epsilon_{p-2}$}(m-1-6)
(m-1-6) edge node[above=0.05cm] {$\epsilon_{p-1}$} (m-1-7)
(m-1-7) edge [loop right] node{$N$} (m-1-7);\end{tikzpicture}\end{center}
 (denoted $M^N$) with natural embeddings $\epsilon_i\colon K^i\hookrightarrow K^{i+1}$. This bijection preserves orbit closure relations,
 dimensions of stabilizers (of single points) and codimensions.
\end{lemma}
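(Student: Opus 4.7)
The plan is to construct an explicit $\GL_{\dfp}$-equivariant isomorphism
\[ \Psi\colon \GL_{\dfp}\times^{P}\N_n^{(x)}\xrightarrow{\sim} R_{\dfp}^{\inj}(\Q_p,I_x), \]
from which the bijection $\Phi$ and the preservation of closures, stabilizer dimensions, and codimensions are formal consequences. I would first embed $P\hookrightarrow\GL_{\dfp}$ by $\iota(p)=(p|_{K^{d_1}},\punkte,p|_{K^{d_{p-1}}},p)$; this is well-defined because every $p\in P$ preserves the standard flag $K^{d_1}\subset\punkte\subset K^{d_p}=K^n$ cut out by the $\epsilon_i$. Using $\iota$, equip $\GL_{\dfp}\times\N_n^{(x)}$ with the twisted $P$-action $p.(g,N)=(g\,\iota(p)^{-1},pNp^{-1})$, so the geometric quotient is precisely $\GL_{\dfp}\times^P\N_n^{(x)}$.

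Next, define $\widetilde{\Psi}\colon\GL_{\dfp}\times\N_n^{(x)}\to R_{\dfp}^{\inj}(\Q_p,I_x)$ by $(g,N)\mapsto g.M^N$, with $M^N$ the representation whose arrow maps are the $\epsilon_i$ and whose loop is $N$. A direct calculation shows that $\widetilde{\Psi}$ is $P$-invariant: the key identity is $\epsilon_i\circ (p|_{K^{d_i}})=(p|_{K^{d_{i+1}}})\circ\epsilon_i$, which holds exactly because $p$ preserves the flag; the loop component behaves correctly by an immediate cancellation. Hence $\widetilde{\Psi}$ descends to the desired morphism $\Psi$. For the inverse, start from a point $(M_{\alpha_1},\punkte,M_{\alpha_{p-1}},M_{\alpha})$ of $R_{\dfp}^{\inj}(\Q_p,I_x)$. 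The injectivity assumption guarantees that $V_i:=(M_{\alpha_{p-1}}\circ\punkte\circ M_{\alpha_i})(K^{d_i})$ is a $d_i$-dimensional subspace of $K^n$, and $V_1\subset\punkte\subset V_p=K^n$ is a flag of the same type as the standard one. Any $h\in\GL_n$ sending the standard flag to $V_{\bullet}$ is unique up to right multiplication by $P$; having fixed such an $h$, unique automorphisms $g_i\in\GL_{d_i}$ for $i<p$ are determined by the requirement that $h\circ\epsilon_{p-1}\circ\punkte\circ\epsilon_i\circ g_i=M_{\alpha_{p-1}}\circ\punkte\circ M_{\alpha_i}$. Sending the given point to $[((g_1,\punkte,g_{p-1},h),\,h^{-1}M_{\alpha}h)]\in\GL_{\dfp}\times^P\N_n^{(x)}$ produces the inverse of $\Psi$.

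Once the isomorphism is in hand, the bijection $\Phi$ of the lemma is obtained by passing to orbit spaces: both sides of the isomorphism carry a $\GL_{\dfp}$-action, and $\GL_{\dfp}$-orbits on $\GL_{\dfp}\times^P\N_n^{(x)}$ are canonically identified with $P$-orbits on $\N_n^{(x)}$, $P.N\mapsto \Orb_{M^N}$. The preservation of orbit closures, of stabilizer dimensions, and of codimensions then follows from the general fact that the projection $\GL_{\dfp}\times\N_n^{(x)}\to \GL_{\dfp}\times^P\N_n^{(x)}$ is a principal $P$-bundle, hence faithfully flat and open, so that closed $P$-stable subsets correspond to closed $\GL_{\dfp}$-stable subsets, and $\dim \Orb_{M^N}=\dim\GL_{\dfp}-\dim P+\dim P.N$ forces the stabilizer and codimension identities. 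The main obstacle in this plan is the careful bookkeeping around the embedding $\iota$ and the twisted $P$-action needed to verify invariance of $\widetilde{\Psi}$ and well-definedness of the inverse up to the $P$-ambiguity in the choice of $h$; everything afterwards is standard associated-bundle formalism.
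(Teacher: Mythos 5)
Your overall strategy is exactly the one the paper relies on: it does not prove the lemma itself but refers to \cite[Lemma 3.2]{BoRe}, whose proof is precisely the associated-bundle argument you set up --- embed $P$ into $\GL_{\dfp}$ via restriction to the standard flag, send $(g,N)$ to $g.M^N$, check $P$-invariance through the intertwining identity $\epsilon_i\circ(p|_{K^{d_i}})=(p|_{K^{d_{i+1}}})\circ\epsilon_i$, and deduce the statements about closures, stabilizers and codimensions from the principal-bundle formalism. So the route is the right one and the formal consequences you draw at the end are fine.

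There is, however, a concrete slip in your inverse construction, and it sits exactly at the point you yourself flag as the delicate one. Under the base-change action the forward map sends $[(g_1,\punkte,g_{p-1},h),N]$ to the representation with $M_{\alpha_j}=g_{j+1}\circ\epsilon_j\circ g_j^{-1}$, so the composite satisfies $M_{\alpha_{p-1}}\circ\punkte\circ M_{\alpha_i}=h\circ\epsilon_{p-1}\circ\punkte\circ\epsilon_i\circ g_i^{-1}$, i.e.\ the correct recovery equation is $h\circ\epsilon_{p-1}\circ\punkte\circ\epsilon_i=M_{\alpha_{p-1}}\circ\punkte\circ M_{\alpha_i}\circ g_i$, with $g_i$ on the \emph{right of the $M$-side}. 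As you wrote it ($h\circ\epsilon_{p-1}\circ\punkte\circ\epsilon_i\circ g_i=M_{\alpha_{p-1}}\circ\punkte\circ M_{\alpha_i}$) two things go wrong: first, replacing $h$ by $hp$ changes your $g_i$ by \emph{left} multiplication, $g_i\mapsto(p|_{K^{d_i}})^{-1}g_i$, while the last component changes by \emph{right} multiplication, so the resulting class in $\GL_{\dfp}\times^P\N_n^{(x)}$ genuinely depends on the choice of $h$ and the map is not well defined; second, even for a fixed $h$ your formula returns $g_i^{-1}$ instead of $g_i$, so the composite with $\Psi$ is not the identity. With the corrected equation both choices of $h$ differ by the twisted $P$-action (all components now transform by right multiplication by $\iota(p)$, and $h^{-1}M_\alpha h$ by the matching conjugation), and the two maps are mutually inverse. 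This is a fixable bookkeeping error, not a flaw in the method, but as written the inverse would fail.
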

Due to considerations of different parabolic subgroups and nilpotency degrees, the classification of the corresponding isomorphism classes of representations differs wildly.
\section[P-conjugation on the variety of 2-nilpotent matrices]{$P$-conjugation on $\N_n^{(2)}$}\label{x2}
Let us consider the action of $P$ on the variety $\N_n^{(2)}$ of $2$-nilpotent $n\times n$- matrices. \\[1ex]
As the theorem of W. Krull, R. Remak and O. Schmidt states, every representation in $\rep_{K}(\Q_p,I_2)$ can be decomposed into a direct sum of indecomposables, which is unique up to permutations and isomorphisms. Following \cite[Theorem 3.3]{BoRe}, the following lemma classifies the indecomposables in $\rep_{K}(\Q_p,I_2)$.
\begin{lemma}\label{indec2nilp}
 Up to isomorphisms, the indecomposable representations in $\rep_{K}^{\inj}(\Q_p,I_x)$ are the following:\\[1ex]
$\U_{i,j}$ for $1\leq j\leq i\leq p$:
\[\begin{tikzpicture}
\matrix (m) [matrix of math nodes, row sep=0.02em,
column sep=0.08em, text height=1.0ex, text depth=0.25ex]
{ 0 & \xrightarrow{0} & \cdots & \xrightarrow{0} & 0 & \xrightarrow{0} & K & \xrightarrow{id} & \cdots & \xrightarrow{id} & K & \xrightarrow{e_1} & K^{2} & \xrightarrow{id}& \cdots & \xrightarrow{id} & K^{2}  \\};
\path[->]
(m-1-17) edge [loop right] node{$\alpha$} (m-1-17);
\end{tikzpicture}\]
$\U_{i,j}$ for $1\leq i< j\leq p$:
\[\begin{tikzpicture}
\matrix (m) [matrix of math nodes, row sep=0.02em,
column sep=0.08em, text height=1.0ex, text depth=0.25ex]
{  0 & \xrightarrow{0} & \cdots & \xrightarrow{0} & 0 & \xrightarrow{0} & K & \xrightarrow{id} & \cdots & \xrightarrow{id} & K & \xrightarrow{e_2} & K^{2} & \xrightarrow{id}& \cdots & \xrightarrow{id} & K^{2}  \\ };
\path[->]
(m-1-17) edge [loop right] node{$\alpha$} (m-1-17);
\end{tikzpicture}\]
$\V_{i}$ for $1\leq i\leq p$:
\[\begin{tikzpicture}
\matrix (m) [matrix of math nodes, row sep=0.02em,
column sep=0.1em, text height=1.0ex, text depth=0.25ex]
{  0 & \xrightarrow{0} & \cdots & \xrightarrow{0} & 0 & \xrightarrow{0} & K & \xrightarrow{id} & \cdots & \xrightarrow{id} & K  \\};
\path[->]
(m-1-11) edge [loop right] node{$0$} (m-1-11);
\end{tikzpicture}\]
Here, $e_1$ and $e_2$ are the standard coordinate vectors of $K^2$ and $\alpha\cdot e_1=e_2$, $\alpha\cdot e_2=0$.
\end{lemma}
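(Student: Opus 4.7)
The plan is classical and splits into two tasks. First, verify that each module on the list is indecomposable and that the modules are pairwise non-isomorphic. For each $\V_i$ and $\U_{i,j}$ one writes an endomorphism as a tuple of linear maps compatible with the injective inclusions $\epsilon_k$ and with the loop $\alpha$ (subject to $\alpha^2=0$). The injectivity of the $\epsilon_k$ together with the explicit action of $\alpha$ on $K^2$ (namely $\alpha e_1 = e_2$, $\alpha e_2 = 0$) forces each such endomorphism to be a scalar multiple of the identity, so $\End \cong K$. Pairwise non-isomorphism is then visible from the dimension vector together with the rank of $M_\alpha$ at vertex $p$ and, for coinciding dimension vectors, from whether the inclusion $M_{i-1}\hookrightarrow M_i$ lands in $\im M_\alpha$ or not.

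The harder task is to show that every $M \in \rep^{\inj}_K(\Q_p,I_2)$ decomposes as a direct sum of modules on the list; the full statement then follows by Krull--Remak--Schmidt. I plan to exploit the $K[\alpha]/(\alpha^2)$-structure of $M_p$. Setting $N := M_\alpha$, since $N^2=0$ one obtains a Jordan-type decomposition
\[ M_p \;=\; \bigoplus_{s=1}^{a}\bigl(K v_s \oplus K N v_s\bigr) \;\oplus\; \bigoplus_{t=1}^{b} K w_t, \]
where $\{Nv_1,\ldots,Nv_a, w_1,\ldots,w_b\}$ is a basis of $\ker N$. The goal is then to refine the choice of the $v_s$ and $w_t$ so that each subspace $M_k\subseteq M_p$ (identified with its image under the composed inclusions) is spanned by a subset of the Jordan basis.

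The main obstacle, and where the case distinction in the lemma originates, is to achieve this simultaneous compatibility with the flag $M_1\subseteq M_2\subseteq\cdots\subseteq M_p$. I would proceed by descending induction on $k$, analyzing $M_k$ with respect to the two-step filtration $\im N \subseteq \ker N \subseteq M_p$ intersected with $M_k$, and using an exchange argument to modify the $v_s$ (modulo $\ker N$) and the $w_t$ (modulo $\im N$) so that $M_k$ is again spanned by Jordan basis vectors. Once such a basis has been produced, each pair $(v_s, Nv_s)$ spans a subrepresentation isomorphic to some $\U_{i,j}$, with $i$ recording the first vertex at which $v_s$ enters the filtration and $j$ the first vertex at which $Nv_s$ enters; the two cases $j\leq i$ and $i<j$ correspond exactly to whether the vector newly entering at the critical vertex is a cyclic preimage (so the inclusion reads as $e_1$) or lies already in $\im N$ (so the inclusion reads as $e_2$). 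Likewise each $w_t$ spans a subrepresentation $\V_i$ where $i$ is the vertex at which $w_t$ first appears. Summing these subrepresentations yields the required decomposition and thus completes the classification.
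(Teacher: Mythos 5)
The paper itself does not prove this lemma; it points to \cite[Theorem 3.3]{BoRe}, so your argument can only be judged on its own merits. Your overall strategy --- local endomorphism rings for indecomposability, a Jordan basis of $(M_p,M_\alpha)$ adapted to the flag $M_1\subseteq\cdots\subseteq M_p$ for exhaustion, then Krull--Remak--Schmidt --- is the natural one and surely close to the cited proof. But one step is wrong as stated: it is \emph{not} true that $\End(\U_{i,j})\cong K$ in all cases. For $i<j$ the intermediate one-dimensional subspace is $\langle e_2\rangle=\im M_\alpha$, which is preserved by every $a\cdot\mathrm{id}+b\cdot M_\alpha$, so $\End(\U_{i,j})\cong K[\alpha]/(\alpha^2)$; the same happens for $\U_{i,i}$. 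Indecomposability still follows, but from locality of the endomorphism ring, not from $\End\cong K$; the claim as written would fail. (A smaller point: your indexing is transposed relative to the lemma --- comparing with Proposition \ref{dimhom}, in $\U_{i,j}$ the index $j$ is the entry vertex of the generator $v_s$ and $i$ that of $M_\alpha v_s$, not the other way around.)

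The more serious issue is that the exhaustion step, which is the entire content of the lemma, is only gestured at. The difficulty you need to confront is the coupling between the two halves of a Jordan pair: any exchange $v_s\mapsto v_s+c\,v_{s'}$ forces $Nv_s\mapsto Nv_s+c\,Nv_{s'}$, so an adjustment made to adapt the basis to $M_k$ can destroy the adaptedness of some $M_{k'}$ with $k'>k$ that your descending induction has already secured --- namely whenever $Nv_{s'}$ enters the flag strictly later than $Nv_s$. The exchange argument therefore has to be organized so that $v_{s'}$ is only ever added to $v_s$ when \emph{both} $v_{s'}$ and $Nv_{s'}$ enter the flag no later than $v_s$ and $Nv_s$ respectively (and similarly for mixing the $w_t$ with the $Nv_s$), and one must check that the operations permitted by this partial order still suffice to put $M_k$ into echelon form with respect to the current basis. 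Saying ``modify the $v_s$ modulo $\ker N$ and the $w_t$ modulo $\im N$'' does not address this, and it is exactly here that the case distinction between $\U_{i,j}$ and $\U_{j,i}$ (and the finiteness of the classification) is actually earned. Until that bookkeeping is carried out, the proof is a plan rather than an argument.
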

 An \textit{enhanced oriented link pattern} of type $(b_1,\punkte,b_p)$ is an oriented graph on the vertices $\{1,\punkte ,p\}$ together with a (possibly empty) set of of dots at each vertex, such that the sum of the numbers of sources, targets and dots at every vertex $i$ equals $b_i$. Clearly, an enhanced oriented link pattern of a fixed type is far from being unique.\\[1ex]
For example, an enhanced oriented link pattern of type $(3,2,6,2,5)$ is given by
\begin{center}\begin{tikzpicture}[descr/.style={fill=white,inner sep=2.5pt}]
  \matrix (m) [matrix of math nodes, row sep=1.01em, column sep=1.5em, text height=1.5ex, text depth=0.25ex]
 {\bullet & \bullet & \ddddot{\bullet} & \bullet & \dddot{\bullet} \\
1 & 2 & 3 & 4 &5\\};
\path[->,font=\scriptsize]
(m-1-1) edge [bend left=60]  (m-1-3)
(m-1-3) edge [bend left=45]   (m-1-1)
(m-1-4) edge [bend left=45]   (m-1-2)
(m-1-1) edge [bend left=45]   (m-1-2)
(m-1-5) edge [loop]   (m-1-5)
(m-1-5) edge [bend left=45]   (m-1-4);
  \end{tikzpicture}.\end{center} 

\begin{theorem}\label{paraboliccase}
There are natural bijections between 
\begin{enumerate}
\item $P$-orbits in $\N_n^{(2)}$,
\item  isomorphism classes in $\rep^{\inj}_{K}(\Q_p,I_2)$ of dimension vector $\dfp$,
\item matrices $N=(p_{i,j})_{i,j}\in \mathbf{N}^{p\times p}$, such that
$\sum_j (p_{i,j}+ p_{j,i}) \leq b_i$ for all $i\in\{1,\punkte,p\}$,
\item   and enhanced oriented link patterns of type $(b_1,\punkte,b_p)$.
\end{enumerate}
Moreover, if the isomorphism class of $M$ corresponds to a matrix $N$ under this bijection, the orbit $\Orb_M\subset R_{\dfp}^{\inj}(\Q_p,I_2)$ and the orbit $P.N\subset\N_n^{(2)}$ correspond to each other via the bijection $\Phi$ of Lemma \ref{bijection}.
\end{theorem}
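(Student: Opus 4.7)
The strategy is to compose the bijections (1)$\leftrightarrow$(2)$\leftrightarrow$(3)$\leftrightarrow$(4). The step (1)$\leftrightarrow$(2), including the ``moreover'' compatibility with $\Phi$, is supplied directly by Lemma \ref{bijection}: it transports a $P$-orbit $P.N$ to the isomorphism class of $M^N$ and preserves orbit closure data. So everything reduces to constructing natural bijections (2)$\leftrightarrow$(3)$\leftrightarrow$(4).

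For (2)$\leftrightarrow$(3), I would invoke the Krull--Remak--Schmidt theorem together with Lemma \ref{indec2nilp}: every $M\in\rep^{\inj}_K(\Q_p,I_2)(\dfp)$ decomposes uniquely (up to order) into indecomposables of the form $\U_{i,j}$ or $\V_k$. Let $p_{j,i}$ denote the multiplicity of $\U_{i,j}$ in this decomposition, assembling a matrix $N=(p_{a,b})\in\mathbf{N}^{p\times p}$, and let $c_k$ denote the multiplicity of $\V_k$. A short case check on the two families of Lemma \ref{indec2nilp} shows that, whether $j\le i$ or $i<j$, the dimension of $\U_{i,j}$ jumps from $0$ to $1$ at vertex $\min(i,j)$ and from $1$ to $2$ at vertex $\max(i,j)$ (with a single jump of $2$ when $i=j$), while $\V_k$ jumps by $1$ only at vertex $k$. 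Summing contributions at each vertex $k$ and comparing with the prescribed increment $b_k=d_k-d_{k-1}$ yields
\[
b_k=\sum_{j}\bigl(p_{k,j}+p_{j,k}\bigr)+c_k,
\]
which is equivalent to the stated constraint $\sum_j(p_{k,j}+p_{j,k})\le b_k$ together with the uniquely determined dot count $c_k=b_k-\sum_j(p_{k,j}+p_{j,k})\ge 0$. Conversely, given any matrix $N$ satisfying the constraint, defining $c_k$ by this formula and setting
\[
M:=\bigoplus_{i,j}\U_{i,j}^{\,p_{j,i}}\,\oplus\,\bigoplus_{k}\V_k^{\,c_k}
\]
produces an object of $\rep_K(\Q_p,I_2)(\dfp)$; moreover $M\in\rep^{\inj}$ because the transition maps $\id$, $e_1$, $e_2$ of each indecomposable are injective and a direct sum of injections is injective.

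For (3)$\leftrightarrow$(4), one reads the definition of an enhanced oriented link pattern of type $(b_1,\ldots,b_p)$ as the datum of a multi-set of oriented edges between $\{1,\ldots,p\}$ (encoded by a matrix $(p_{i,j})\in\mathbf{N}^{p\times p}$) together with a nonnegative number of dots at each vertex, such that sources$\,+\,$targets$\,+\,$dots at $i$ equals $b_i$. This is precisely the constraint $\sum_j(p_{i,j}+p_{j,i})\le b_i$ together with the forced dot count $c_i=b_i-\sum_j(p_{i,j}+p_{j,i})$, giving a tautological bijection. Chaining the three bijections with $\Phi$ yields the ``moreover'' statement automatically.

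The main (modest) obstacle is the dimension-vector bookkeeping for the two subfamilies $j\le i$ and $i<j$ of $\U_{i,j}$: one must verify that both subfamilies contribute symmetrically to the vertices $i$ and $j$, so that the distinction between the embeddings $e_1$ and $e_2$ disappears in the combinatorial count $p_{i,j}+p_{j,i}$. Everything else is either an application of Krull--Remak--Schmidt or a direct unwinding of definitions.
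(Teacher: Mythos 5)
Your proposal is correct and follows essentially the same route as the paper, which defers to the proof of \cite[Theorem 3.4]{BoRe}: Lemma \ref{bijection} handles (1)$\leftrightarrow$(2) together with the ``moreover'' clause, and Krull--Remak--Schmidt plus the classification of indecomposables in Lemma \ref{indec2nilp} gives (2)$\leftrightarrow$(3)$\leftrightarrow$(4) via exactly the dictionary the paper states (multiplicity of $\V_i$ $=$ dots at $i$, multiplicity of $\U_{i,j}$ $=$ arrows $j\rightarrow i$). Your dimension-vector bookkeeping for the two subfamilies of $\U_{i,j}$ is the right verification and matches the intended argument.
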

The proof is similar to the proof of \cite[Theorem 3.4]{BoRe}.
Note that the multiplicity of the indecomposable $\V_i$ is obtained as the number of dots at the vertex $i$ which we call ``fixed vertices''. The multiplicity of the indecomposable $\U_{i,j}$ is given as the number of arrows $j\rightarrow i$. 
We define $\eolp(X)$ to be the enhanced oriented link pattern corresponding to both the isomorphism class of \mbox{$X\in\rep^{\inj}_{K}(\Q_p,I_2)(\dfp)$} and the $P$-orbit of $X\in\N_n^{(2)}$.\\[1ex] 
An \textit{oriented link pattern} of size $n$ is an  enhanced oriented link pattern of type $(1,\punkte ,1)$. Thus, every vertex is incident with at most one arrow.
The concrete classification of the Borel-orbits is then given by the  oriented link patterns of size $n$ and is easily obtained from Theorem \ref{paraboliccase} (see, for the detailed proof, \cite[Theorem 3.4]{BoRe}). 
As before, we define $\olp(X)$ to be the oriented link pattern corresponding to both the isomorphism class of \mbox{$X\in\rep^{inj}_{K}(\Q_n,I_2)(\dfs)$} and the $B$-orbit of $X\in\N_n^{(2)}$.\\[1ex]
Given representations $M,M'\in \rep_{K}(\Q_p,I_2)$, we set $[M, M'] \coloneqq \dim_{K} \Hom(M, M' )$.
These dimensions are calculated for indecomposable representations in $\rep_{K}(\Q_p,I_2)$ in \cite[Lemma 4.2]{BoRe}.
\begin{proposition}\label{dimhom}
 Let $i, j, k, l \in \{1,\punkte, p\}$. Then
\begin{enumerate}
\item $[\V_k , \V_i ] = [\V_k , \U_{i,j} ] =\delta_{i\leq k}$,
\item $[\U_{k,l} , \V_i ] = \delta_{i\leq l}$,
\item $[\U_{k,l} , \U_{i,j} ] = \delta_{i\leq l} + \delta_{j\leq l} \cdot \delta_{i\leq k}$,
\end{enumerate}
where
$\delta_{x\leq y} \coloneqq \left\{ \begin{array}{ll} 1, & \hbox{$ \textrm{if}~ x\leq y$;} \\ 0, & \hbox{$ \textrm{otherwise}$.} \end{array} \right. $ 
\end{proposition}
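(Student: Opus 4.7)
My plan is to compute each Hom-space directly from the explicit descriptions of the indecomposables in Lemma \ref{indec2nilp}. The key structural observation is that every indecomposable is supported on a set of consecutive vertices ending at $p$, with dimension $1$ at each vertex except possibly a block of vertices at the end where the dimension is $2$, and with all transition maps being either identities, embeddings $e_1,e_2$, or $0$. Hence a morphism $f\colon M\to M'$ is a tuple $(f_i)$ of $1\times 1$, $1\times 2$, $2\times 1$ or $2\times 2$ matrices, and each commutation square with an arrow of $\Q_p$ or with the loop $\alpha$ is a single, small linear equation. So in every case the dimension count reduces to solving a handful of such equations.

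First I would handle the three easy cases. For $[\V_k,\V_i]$, both loops are zero and both representations are one-dimensional on their support, so a morphism is a single scalar on the overlap $\{\max(i,k),\punkte,p\}$, giving dimension $\delta_{i\leq k}$. For $[\V_k,\U_{i,j}]$, the loop on $\V_k$ is zero, so the component $f_p\colon K\to K^2$ must land in $\ker(\alpha)=\langle e_2\rangle$; chasing this value back along the chain of identities (and through the $e_1$ or $e_2$ at the jump vertex of $\U_{i,j}$) shows the morphism is pinned down up to one scalar exactly when the support of $\V_k$ sits inside the support of $\U_{i,j}$, which is the condition $i\leq k$. For $[\U_{k,l},\V_i]$, dually, $f_p\alpha=0$ forces $f_p$ to vanish on $\im(\alpha)=\langle e_2\rangle$, so $f_p$ is determined by its value on $e_1$; then commutativity with the $e_1$/$e_2$ embedding in $\U_{k,l}$ shows the nonvanishing scalar is supported on the $K$-leg coming out of vertex $l$, producing $\delta_{i\leq l}$.

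The main case is $[\U_{k,l},\U_{i,j}]$, where both loops are the nilpotent Jordan block on $K^2$. Writing $f_p=\bigl(\begin{smallmatrix}a&b\\c&d\end{smallmatrix}\bigr)$ and imposing $f_p\alpha=\alpha f_p$ forces $c=0$ and $a=d$, so the space of admissible $f_p$ is two-dimensional, spanned by the identity and by the rank-one map $e_1\mapsto e_2,\ e_2\mapsto 0$. The rank-one generator is compatible with the rest of the chain precisely when the ``long leg'' of $\U_{k,l}$ starting at $l$ fits inside the support of $\U_{i,j}$, giving the first summand $\delta_{i\leq l}$; the identity-like generator extends to a morphism precisely when \emph{both} legs of $\U_{k,l}$ (starting at $l$ and at $k$) embed into the corresponding legs of $\U_{i,j}$ (starting at $j$ and at $i$), which gives the product $\delta_{j\leq l}\cdot \delta_{i\leq k}$.

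The main obstacle I anticipate is bookkeeping: the two families $\U_{i,j}$ with $j\leq i$ and $j>i$ use different embeddings ($e_1$ versus $e_2$) at the jump vertex, and one must verify that in the long case analysis for $[\U_{k,l},\U_{i,j}]$ these sign/coordinate choices are consistent so that the same two-dimensional generating pair $\{\id,\, e_1\mapsto e_2\}$ descends to the two combinatorial conditions regardless of whether $l\leq k$ or $l>k$ and whether $j\leq i$ or $j>i$. Once this is checked systematically (four subcases, each a short matrix computation), the formula in part (3) follows.
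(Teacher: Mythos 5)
Your direct component-wise computation is correct and is essentially the only argument available here: the paper gives no proof of Proposition \ref{dimhom}, deferring to \cite[Lemma 4.2]{BoRe}, where the same explicit calculation of morphisms between the indecomposables of Lemma \ref{indec2nilp} is carried out; in particular your identification of the admissible top components $f_p$ as the commutant $\langle \id,\alpha\rangle$ of the Jordan block, with the $\alpha$-generator contributing $\delta_{i\leq l}$ and the identity-generator contributing $\delta_{j\leq l}\cdot\delta_{i\leq k}$, is the right decomposition (all the constraints arising from the chain are coordinate-aligned in this basis, so the admissible subspace really is spanned by the generators that individually extend). One small caution: for $[\V_k,\U_{i,j}]$ the correct condition is genuinely $i\leq k$ and not ``the support of $\V_k$ sits inside the support of $\U_{i,j}$'' (which would read $\min(i,j)\leq k$) --- when $j\leq k<i$ the supports are nested yet the morphism still vanishes, because the value of $f_p$ must lie in $\ker(\alpha)=\langle e_2\rangle$ while the jump at vertex $i$ is via $e_1$; this is exactly the chase through the jump vertex that you describe, so keep that mechanism and drop the support gloss.
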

In order to prove an easy description of the parabolic orbit closures in $\N_n^{(2)}$ in terms of (enhanced) oriented link patterns, we discuss how the dimensions of Proposition \ref{dimhom} are linked with these.
\begin{proposition}\label{combihom}
 Let $M\in \rep^{\inj}_{K}(\Q_n,I_2)(\dfp)$ and let $i, j, k, l \in \{1,\punkte, p\}$. Then by considering $X:=\eolp(M)$:\\[1ex]
 1. $a_k(M)\coloneqq[\V_k,M]= \sharp\{ fixed~vertices~\leq k~{\rm in}~X\}~+~\sharp\{targets~of~arrows~\leq k~{\rm in}~X\}$, \\[1ex]
2. $b_{k,l}(M)\coloneqq[\U_{k,l},M]= a_l(M) + \sharp\{ \textrm{arrows~with~source}~\leq l~  \textrm{and~target}~\leq k~{\rm in}~X\}$,\\[1ex]
3. $\overline{a_i}(M)\coloneqq[M,\V_i]= \sharp\{ \textrm{fixed~vertices}~\geq i~{\rm in}~X\}~+~\sharp\{ \textrm{sources~of~arrows}~\geq i~{\rm in}~X\}$,\\[1ex]
4. $\overline{b_{i,j}}(M)\coloneqq[M,\U_{i,j}]= \overline{a_i}(M) + \sharp\{ \textrm{arrows~with~source}~\geq j~  \textrm{and~target}~\geq i~{\rm in}~X\}$.
\end{proposition}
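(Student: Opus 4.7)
The proof is a bookkeeping exercise built on top of Lemma \ref{indec2nilp} and Proposition \ref{dimhom}. First I would invoke Krull--Remak--Schmidt to write
\[
M \;\cong\; \bigoplus_{i=1}^{p} \V_i^{\,c_i} \;\oplus\; \bigoplus_{1\le i,j\le p} \U_{i,j}^{\,m_{i,j}},
\]
and then translate the multiplicities into combinatorics of $X=\eolp(M)$: by the paragraph following Theorem \ref{paraboliccase}, $c_i$ is the number of dots at vertex $i$ (fixed vertices) of $X$, and $m_{i,j}$ is the number of arrows $j\to i$ in $X$ (so $j$ is the source and $i$ the target).

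Next, since $\Hom$ is bilinear, each of the four quantities is a $\mathbf{Z}$-linear combination of the dimensions computed in Proposition \ref{dimhom}. For item 1, I would apply
\[
[\V_k,M] \;=\; \sum_i c_i\,[\V_k,\V_i] \;+\; \sum_{i,j} m_{i,j}\,[\V_k,\U_{i,j}] \;=\; \sum_i c_i\,\delta_{i\le k} \;+\; \sum_{i,j} m_{i,j}\,\delta_{i\le k},
\]
and then read off: $\sum_i c_i\delta_{i\le k}$ counts fixed vertices $\le k$, while $\sum_{i,j}m_{i,j}\delta_{i\le k}$ counts arrows whose target $i$ satisfies $i\le k$, which is exactly the asserted formula. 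For item 2 the same expansion with $[\U_{k,l},\V_i]=\delta_{i\le l}$ and $[\U_{k,l},\U_{i,j}]=\delta_{i\le l}+\delta_{j\le l}\delta_{i\le k}$ gives
\[
[\U_{k,l},M] \;=\; \underbrace{\sum_i c_i\,\delta_{i\le l} + \sum_{i,j} m_{i,j}\,\delta_{i\le l}}_{=\,a_l(M)} \;+\; \sum_{i,j} m_{i,j}\,\delta_{j\le l}\,\delta_{i\le k},
\]
and the second summand is, by the source/target convention above, the number of arrows of $X$ with source $\le l$ and target $\le k$.

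For items 3 and 4, I would expand $[M,\V_i]$ and $[M,\U_{i,j}]$ in the first variable instead. This time the indecomposable contributions $[\V_k,\V_i]=\delta_{i\le k}$, $[\V_k,\U_{i,j}]=\delta_{i\le k}$, $[\U_{k,l},\V_i]=\delta_{i\le l}$ and $[\U_{k,l},\U_{i,j}]=\delta_{i\le l}+\delta_{j\le l}\delta_{i\le k}$ yield
\[
[M,\V_i] = \sum_k c_k\,\delta_{i\le k} + \sum_{k,l} m_{k,l}\,\delta_{i\le l},
\]
which counts fixed vertices $\ge i$ plus arrows with source $\ge i$, establishing item 3; the additional summand $\sum_{k,l} m_{k,l}\,\delta_{j\le l}\,\delta_{i\le k}$ appearing for $[M,\U_{i,j}]$ counts arrows with source $\ge j$ and target $\ge i$, giving item 4.

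\textbf{Main obstacle.} There is no serious obstacle: the argument is completely formal once the multiplicities of the indecomposables are correctly identified with the link-pattern data. The only thing to be careful about is the source/target convention for the indecomposables $\U_{i,j}$ (source is $j$, target is $i$) and making sure the inequalities $\le$ versus $\ge$ are tracked correctly when passing from Hom in the first versus the second variable.
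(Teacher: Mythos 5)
Your proof is correct and is exactly the argument the paper intends: the paper gives no explicit proof of Proposition \ref{combihom}, but states it immediately after Proposition \ref{dimhom} and the remark identifying the multiplicities of $\V_i$ and $\U_{i,j}$ with dots and arrows of the link pattern, so the intended derivation is precisely your Krull--Remak--Schmidt decomposition plus additivity of $\Hom$. All four index computations in your expansion check out against Proposition \ref{dimhom}.
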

For two representations $M=\bigoplus_{i,j=1}^p\U_{i,j}^{m_{i,j}}\oplus \bigoplus_{i=1}^p\V_i^{n_i}$ and $M'=\bigoplus_{i,j=1}^p\U_{i,j}^{m'_{i,j}}\oplus \bigoplus_{i=1}^p\V_i^{n'_i}$ in $\rep^{\inj}_{K}(\Q_n,I_2)(\dfp)$, we obtain
\[[M,M']=\sum_{i,j=1}^p m_{i,j}b_{i,j}(M')+ \sum_{k=1}^p n_k a_k(M')=\sum_{i,j=1}^p m'_{i,j}\overline{b_{i,j}}(M)+ \sum_{k=1}^p n'_k \overline{a_k}(M).\]
Let $N\in\N_n^{(2)}$ be a $2$-nilpotent matrix that corresponds to the representation $M$ via the bijection of Lemma \ref{bijection}.
\begin{proposition}\label{stabpar}
\[\dim \overline{P.N}=\dim P.N=\sum\limits_{i=1}^p \sum\limits_{x=1}^i (b_i\cdot b_x)-\sum\limits_{i,j=1}^p m_{i,j}b_{i,j}(N)-\sum\limits_{i=1}^p n_i a_i(N). \]
\end{proposition}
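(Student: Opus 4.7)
Since $P.N$ is an orbit of an algebraic group action, it is locally closed and irreducible, so $\dim\overline{P.N}=\dim P.N$, and this dimension is computed via the orbit-stabilizer formula
\[\dim P.N = \dim P - \dim \mathrm{Stab}_P(N).\]
The first summand is straightforward: since $P$ is the upper-block parabolic with blocks $(b_1,\ldots,b_p)$, it consists of block matrices whose $(x,i)$-block vanishes for $x>i$, so
\[\dim P = \sum_{x\leq i} b_x b_i = \sum_{i=1}^p\sum_{x=1}^i b_i b_x,\]
which already matches the first term of the claimed formula.

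For the stabilizer, the plan is to transport the problem to the representation-theoretic side and invoke the formula for $[M,M]$ established immediately before the proposition. By Lemma \ref{bijection}, the isomorphism $R_{\dfp}^{\inj}(\Q_p,I_2)\cong \GL_{\dfp}\times^{P}\N_n^{(2)}$ identifies $\GL_{\dfp}$-orbits with $P$-orbits and preserves the dimensions of stabilizers of single points; explicitly, if $m\in R_{\dfp}^{\inj}(\Q_p,I_2)$ corresponds to $N$ via $\Phi$, then $\dim\mathrm{Stab}_P(N)=\dim\mathrm{Stab}_{\GL_{\dfp}}(m)$. Since the stabilizer of $m$ under base change is precisely the automorphism group $\Aut(M^N)$, which is a Zariski-open subset of the endomorphism algebra $\End(M^N)$, it has the same dimension, namely $[M^N,M^N]$.

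It then remains to compute $[M^N,M^N]$. Writing $M^N=\bigoplus_{i,j}\U_{i,j}^{m_{i,j}}\oplus\bigoplus_k \V_k^{n_k}$ according to the Krull–Remak–Schmidt decomposition described by Lemma \ref{indec2nilp}, the formula stated just before the proposition gives
\[[M^N,M^N]=\sum_{i,j=1}^p m_{i,j}\, b_{i,j}(M^N)+\sum_{k=1}^p n_k\, a_k(M^N).\]
Under the bijection $\Phi$, the enhanced oriented link pattern of $M^N$ agrees with that attached to the $P$-orbit of $N$, so the combinatorial quantities $b_{i,j}(M^N)$ and $a_k(M^N)$ of Proposition \ref{combihom} coincide with $b_{i,j}(N)$ and $a_k(N)$ respectively. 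Substituting these values and assembling the pieces yields the claimed equality.

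The only genuinely non-routine point is the stabilizer identification coming from the associated fibre bundle structure of Lemma \ref{bijection}; everything else is bookkeeping with the Hom-formulas of Propositions \ref{dimhom} and \ref{combihom}. I expect no further obstacle.
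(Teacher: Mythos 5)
Your proposal is correct and follows essentially the same route as the paper: orbit--stabilizer for $P$, transport of the stabilizer dimension through the bijection of Lemma \ref{bijection}, and identification of the isotropy group's dimension with $[M^N,M^N]$ computed by the displayed Hom-formula preceding the proposition. The paper's proof is just a one-line version of this; your added remarks (the explicit count $\dim P=\sum_{i\leq x}$ wait, $\sum_{x\leq i}b_xb_i$, and the observation that the automorphism group is open in the endomorphism algebra) only make explicit what the paper leaves implicit.
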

\begin{proof}The equalities
\[ \dim P.N~= \dim P-\dim\Iso_{P}(N) = \dim P- \dim\Iso_{\GL_{\dfp}}(M) = \dim P - [M,M] \]
yield the claim, here $\Iso_{\GL_{\dfp}}(M)$ is the isotropy group of $m\in R_{\dfp}^{\inj}(\Q_p,I_2)$ in $\GL_{\dfp}$.
\end{proof}
\subsection{Borel-orbit closures}
Let $M$ and $M'$ be two representations in $\rep_{K}(\Q_n,I_2)$ of the same dimension vector $\df$.  Since the correspondence of Lemma \ref{bijection} preserves orbit closure relations, we know that $M\leq_{\rm deg}M'$ if and only if the corresponding $2$-nilpotent matrices, denoted by $N=(m_{i,j})_{i,j}$ and $N'=(m'_{i,j})_{i,j}$, respectively, fulfill $B.N'\subset\overline{B.N}$ in $\N_n^{(2)}$. The following theorem can be found in \cite[Theorem 4.3]{BoRe}.
\begin{theorem}\label{pq} We have $M\leq_{\rm deg} M'$ (or equivalently, $B.N'\subset\overline{B.N}$ in the notation above) if and only if $a_k(M)\leq a_k(M')$ and $b_{k,l}(M)\leq b_{k,l}(M')$ for all $k,l\in\{1,\punkte,n\}$.
\end{theorem}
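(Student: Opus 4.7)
The plan is to follow the standard template for "Hom-order equals degeneration-order" theorems for representations of finite-dimensional algebras, specialized to the algebra $\A(n,2)$ whose indecomposables are explicitly classified in Lemma \ref{indec2nilp}.

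The forward direction is the easy one: for every representation $X\in\rep_K(\Q_n,I_2)$, the function $m\mapsto \dim_K\Hom(X,m)$ is upper semi-continuous on $R_{\df}(\Q_n,I_2)$, so $\Orb_{M'}\subset\overline{\Orb_M}$ forces $[X,M']\geq [X,M]$. Applying this to $X=\V_k$ and $X=\U_{k,l}$ yields the required inequalities for $a_k$ and $b_{k,l}$.

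For the converse, I would first observe that the numbers $a_k(M)$ and $b_{k,l}(M)$ for $k,l\in\{1,\punkte,n\}$ determine the isomorphism class of $M$: by Krull--Remak--Schmidt and Lemma \ref{indec2nilp}, $M$ is specified by the multiplicities $n_i$ of $\V_i$ and $m_{i,j}$ of $\U_{i,j}$, and Proposition \ref{dimhom} shows these multiplicities can be solved for by a triangular system in the $a_k$'s and $b_{k,l}$'s (descending induction on the indices, using that $[\U_{k,l},\U_{i,j}]$ and $[\V_k,\U_{i,j}]$ only contribute when the indices lie in a prescribed range). Thus if $M\neq M'$ then at least one inequality is strict, and we may set up an induction on the positive integer
\[
  \Delta(M,M')\coloneqq \sum_{k,l}\bigl(b_{k,l}(M')-b_{k,l}(M)\bigr)+\sum_k\bigl(a_k(M')-a_k(M)\bigr).
\]

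The core of the argument is then to produce, whenever $M\neq M'$ and the inequalities hold, an intermediate representation $L$ with $M<_{\deg} L$ (in fact by an elementary/minimal degeneration) such that the same inequalities still hold between $L$ and $M'$ but with $\Delta(L,M')<\Delta(M,M')$. Concretely, I would look for a short exact sequence $0\to U\to S\to V\to 0$ in $\rep_K(\Q_n,I_2)$ whose middle term $S$ is a direct summand of $M$ and whose endpoints $U\oplus V$ replace $S$ in $L$, giving a genuine orbit degeneration via a one-parameter family. The elementary moves come in a small number of combinatorial types, visible on oriented link patterns: splitting an arrow $\U_{i,j}$ into two dots $\V_i\oplus\V_j$, swapping the endpoints of two crossing arrows, or rearranging a dot and an arrow that share a vertex. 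The main obstacle is the combinatorial bookkeeping showing that at least one such elementary move is available whenever $\olp(M)\neq\olp(M')$ and the Hom-inequalities hold; this is done by a case analysis on the leftmost (or smallest-index) position where $\olp(M)$ and $\olp(M')$ differ, using the inequalities to exclude all but the desired configuration. Together with the preservation of orbit closures under $\Phi$ in Lemma \ref{bijection}, this completes the proof, matching \cite[Theorem 4.3]{BoRe}.
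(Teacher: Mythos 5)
First, note that the paper does not prove this theorem at all: it is quoted verbatim from \cite[Theorem 4.3]{BoRe}, and the proof there is short because it invokes Zwara's general result that for a representation-finite algebra the degeneration order coincides with the Hom-order (i.e.\ $M\leq_{\deg}M'$ iff $[X,M]\leq[X,M']$ for all indecomposables $X$), after which one only has to plug in the classification of Lemma \ref{indec2nilp} and the Hom-dimensions of Proposition \ref{dimhom}. Your forward direction (upper semi-continuity of $m\mapsto[X,m]$ on orbit closures) is correct and is indeed the easy half in either approach.

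Your converse, however, has a genuine gap, in fact two. First, the entire content of the hard direction is the ``combinatorial bookkeeping'' you defer: exhibiting, whenever $M\not\cong M'$ and all inequalities hold, an elementary move $M<_{\deg}L$ that does not overshoot $M'$ in the Hom-order. You state this as the plan but do not carry out the case analysis, and this is precisely where all the work lies (it is essentially the content of \cite[Theorem 4.6]{BoRe}). Second, and more concretely, your toolkit of elementary moves is framed exclusively as short exact sequences $0\to U\to S\to V\to 0$ with $S$ a summand of $M$ replaced by $U\oplus V$ in $L$. This cannot suffice: as Proposition \ref{mindisj} and the proof of Theorem \ref{indecgeneral} make explicit, the minimal disjoint degeneration $\U_{s,t}<_{\mdeg}\U_{t,s}$ (reversal of a single arrow in the oriented link pattern) is a degeneration between two \emph{indecomposables} of the same dimension vector and is the one minimal degeneration \emph{not} obtained from an extension. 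Any induction on $\Delta(M,M')$ built only from extension-type moves will get stuck at pairs differing by such a reversal; to include it you would have to verify the orbit-closure inclusion by an explicit one-parameter family, which your exact-sequence mechanism does not provide. So either you supply that extra move and then execute the full case analysis, or you take the route of \cite{BoRe} and reduce everything to Zwara's theorem for representation-finite algebras.
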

 The key to calculating all minimal degenerations is obtained by the following proposition (see \cite[Corollary 4.5]{BoRe}).
\begin{proposition}\label{mindisj}
Let $D<_{\mdeg}D'$ be a minimal, disjoint degeneration in $\rep^{\inj}_{K}(\Q_n,I_2)$. Then either $D'$ is indecomposable or $D'\cong U\oplus V$, where $U$ and $V$ are indecomposables and there exists an exact sequence $0\rightarrow U\rightarrow D\rightarrow V\rightarrow 0$ or $0\rightarrow V\rightarrow D\rightarrow U\rightarrow 0$.
\end{proposition}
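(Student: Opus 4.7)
The plan is to combine the Riedtmann--Zwara short exact sequence criterion for degenerations of modules with the explicit classification of indecomposables in $\rep_K(\Q_n,I_2)$ provided by Lemma \ref{indec2nilp}, together with the numerical information from Proposition \ref{dimhom}.

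First, I would invoke the criterion that $D\leq_{\deg}D'$ in $\rep_K(\Q_n,I_2)$ is equivalent to the existence of a representation $Z$ fitting into a short exact sequence
\[0\longrightarrow Z\longrightarrow D\oplus Z\longrightarrow D'\longrightarrow 0.\]
Among all such sequences I would choose one with $\dim Z$ minimal, and decompose $Z$ into indecomposables via Krull--Remak--Schmidt. Since all indecomposables of $\rep^{\inj}_K(\Q_n,I_2)$ appear in the finite list of Lemma \ref{indec2nilp} (type $\U_{i,j}$ or $\V_i$), every summand that can arise is known.

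Next, I would assume for contradiction that $D'$ has at least three nonzero indecomposable summands, say $D'\cong A\oplus B\oplus C$. The goal is to split the chosen Riedtmann--Zwara sequence compatibly along this decomposition, producing a representation $E$ with $D<_{\deg}E<_{\deg}D'$ and $E\not\cong D$, $E\not\cong D'$; this contradicts minimality. The disjointness hypothesis that $D$ and $D'$ share no indecomposable summand ensures both degenerations of the chain remain strict. In the remaining case $D'\cong U\oplus V$ with $U,V$ indecomposable, I would cancel repeated indecomposable summands between $D\oplus Z$ and $D'$ using the minimality of $\dim Z$. Disjointness forces the cancellations to occur against one of $U$ or $V$, reducing the Riedtmann--Zwara sequence to either $0\to U\to D\to V\to 0$ or $0\to V\to D\to U\to 0$, which is precisely the asserted form.

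The main obstacle is the splitting step in the middle: showing that a Riedtmann--Zwara sequence whose cokernel decomposes non-trivially itself admits a compatible direct sum decomposition, modulo possibly regrouping summands of $Z$. This requires a careful analysis of $\Hom$ and $\Ext^1$ spaces between the indecomposables $\U_{i,j}$ and $\V_k$ — the Hom-dimensions are supplied by Proposition \ref{dimhom}, while the $\Ext^1$-dimensions can be read off via the Euler form of $\A(n,2)$. The point is that these $\Ext^1$-spaces are small enough in the relevant cases to force any extension with a three-summand cokernel to split off one of its summands. Once this splitting lemma is in place, the rest of the argument is a bookkeeping exercise on multiplicities of indecomposable summands.
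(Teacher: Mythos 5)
The paper does not prove this proposition; it imports it verbatim from \cite[Corollary 4.5]{BoRe}, which in turn rests on Bongartz's general theory of minimal disjoint degenerations (the result invoked as \cite[Theorem 4]{Bo1} in the proof of Theorem \ref{indecgeneral}). So your attempt has to be judged on its own merits, and as it stands it has two genuine gaps, both located exactly at the steps you flag as ``the main obstacle'' and as ``bookkeeping.''

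First, the tool you propose for the three-summand exclusion is not available: the algebra $\A(n,2)=K\Q_n/(\alpha^2)$ has infinite global dimension (the loop vertex carries a copy of $K[\alpha]/(\alpha^2)$), so $\Ext^1$ between indecomposables cannot be read off from an Euler form together with the Hom-dimensions of Proposition \ref{dimhom}; and even if the $\Ext^1$-spaces were known, the Riedtmann--Zwara sequence $0\to Z\to D\oplus Z\to D'\to 0$ is not an extension among the summands of $D'$ to which such a splitting criterion would apply. Second, and more seriously, the reduction in the two-summand case is asserted by ``cancelling'' summands against $U$ or $V$ using minimality of $\dim Z$; no such cancellation principle exists. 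If it did, every disjoint degeneration onto a decomposable module would lie in the extension order $\leq_{\ext}$, i.e.\ $\leq_{\deg}$ and $\leq_{\ext}$ would agree here --- but the paper itself points out (in the proof of Theorem \ref{indecgeneral}) that $\U_{s,t}<_{\mdeg}\U_{t,s}$ is a minimal disjoint degeneration \emph{not} obtained from any extension, so the two orders genuinely differ for this algebra. The correct argument must use minimality of the degeneration $D<_{\mdeg}D'$ itself (not minimality of $Z$) and must explain why decomposability of $D'$ changes the picture; your sketch never engages with either point. What you have is a plausible road map in the spirit of Bongartz's proof, but the two steps that constitute the actual mathematical content are missing or rest on inapplicable tools.
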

A method to construct all orbits contained in a given orbit closure is described in \cite[Theorem 4.6]{BoRe}, since Proposition \ref{mindisj} ``localizes`` the problem to sequences of changes at at most four vertices of the corresponding oriented link pattern. All these minimal, disjoint degenerations are explicitly listed (in terms of oriented link patterns as well) in \cite[Theorem 4.6]{BoRe}.\\[1ex]
Consider an arbitrary minimal, disjoint degeneration $D<_{\mdeg}D'$ in $\rep^{\inj}_{K}(\Q_n,I_2)$. To classify the minimal degenerations in $\rep^{\inj}_{K}(\Q_n,I_2)(\dfs)$, let us (if possible) consider a representation $W$, such that $D\oplus W<_{\deg}D'\oplus W$ is a degeneration in $\rep^{\inj}_{K}(\Q_n,I_2)(\dfs)$. We give an explicit criterion as to whether this degeneration is minimal. 
\begin{theorem}\label{indecgeneral}
The degeneration $D\oplus W<_{\deg}D'\oplus W$ is minimal if and only if every indecomposable direct summand $X$ of $W$ fulfills $[X,D]-[X,D']=0$ and $[D,X]-[D',X]=0$.
\end{theorem}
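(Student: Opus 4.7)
By Theorem~\ref{pq} in combination with Propositions~\ref{dimhom} and~\ref{combihom}, the degeneration order on $\rep^{\inj}_K(\Q_n,I_2)$ is the hom order: $M\leq_{\deg}M'$ holds if and only if $[Y,M]\leq[Y,M']$ for every indecomposable $Y$, and dually $[M,Y]\leq[M',Y]$. Since these inequalities are additive across direct sums, $D\leq_{\deg}D'$ automatically produces the degeneration $D\oplus W\leq_{\deg}D'\oplus W$, so the content of the theorem concerns only whether this degeneration remains minimal.

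\textbf{Necessity.} Arguing contrapositively, assume some indecomposable summand $X$ of $W$ satisfies $[X,D]<[X,D']$ (the case $[D,X]<[D',X]$ is treated dually). By Proposition~\ref{mindisj} fix a non-split short exact sequence $\eta\colon 0\to U\to D\to V\to 0$ with $D'\cong U\oplus V$ (swapping $U$ and $V$ if necessary). Applying $\Hom(X,-)$ to $\eta$, the connecting homomorphism $\delta\colon\Hom(X,V)\to\Ext^1(X,U)$ has rank exactly $[X,D']-[X,D]>0$, so one can pick $\phi\in\Hom(X,V)$ with $\phi^*\eta\neq 0$. The pullback of $\eta$ along $\phi$ furnishes a commutative square whose rows are the exact sequences
\[0\to U\to E\to X\to 0\qquad\text{and}\qquad 0\to E\to D\oplus X\to V\to 0,\]
the former being non-split. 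Writing $W=X\oplus W'$ and setting $L\coloneqq E\oplus V\oplus W'$, the first row gives $E<_{\deg}U\oplus X$ strictly and hence $L<_{\deg}D'\oplus W$, while the second row gives $D\oplus W\leq_{\deg}L$, which is promoted to strict by an appropriate choice of $\phi$ (or, in the degenerate situation $X\cong V$, by switching to the dual exact sequence supplied by Proposition~\ref{mindisj}). Thus $L$ is a strict intermediate and $D\oplus W<_{\deg}D'\oplus W$ fails to be minimal.

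\textbf{Sufficiency.} Conversely, assume that the equalities $[X,D]=[X,D']$ and $[D,X]=[D',X]$ hold for each indecomposable summand $X$ of $W$, and suppose for a contradiction that a strict intermediate $L$ exists with $D\oplus W<_{\deg}L<_{\deg}D'\oplus W$. For every indecomposable summand $X$ of $W$, additivity together with the hypothesized equalities pinches the sandwiches $[X,D\oplus W]\leq[X,L]\leq[X,D'\oplus W]$ and $[D\oplus W,X]\leq[L,X]\leq[D'\oplus W,X]$ to yield $[X,L]=[X,D\oplus W]$ and $[L,X]=[D\oplus W,X]$. Combining these rigidities with Krull--Schmidt and the multiplicity formulas implicit in Proposition~\ref{combihom} (which recover the multiplicities of the indecomposables $\V_i$ and $\U_{i,j}$ as direct summands from the hom vectors), $W$ must be a direct summand of $L$, say $L\cong L_0\oplus W$ with $\dim L_0=\dim D$. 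Then $D<_{\deg}L_0<_{\deg}D'$ strictly in $\rep^{\inj}_K(\Q_n,I_2)$, contradicting the minimality of $D<_{\mdeg}D'$.

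\textbf{Main obstacle.} The delicate step is the cancellation argument in the sufficiency direction, where hom equalities at summands of $W$ must be converted into equality of multiplicities in $L$. This step rests essentially on the explicit list of indecomposables from Lemma~\ref{indec2nilp} and the combinatorial computation of hom dimensions in Proposition~\ref{combihom}; it is here that the specifics of the $2$-nilpotent case are indispensable. A secondary subtlety in the necessity direction is the degenerate case $X\cong V$, in which the naive pullback $L\cong D\oplus W$ collapses and one must instead use the dual short exact sequence from Proposition~\ref{mindisj}.
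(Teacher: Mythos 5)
Your proof has two genuine gaps, both located exactly where the paper has to work hardest.

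First, the necessity direction begins by invoking Proposition~\ref{mindisj} to ``fix a non-split short exact sequence $0\to U\to D\to V\to 0$ with $D'\cong U\oplus V$.'' But Proposition~\ref{mindisj} explicitly allows the alternative that $D'$ is indecomposable, and this alternative actually occurs: the minimal disjoint degeneration $\U_{t,s}<_{\mdeg}\U_{s,t}$ (reversing an arrow of the link pattern) is, as the paper notes, the only one \emph{not} obtained from an extension. For this case your pullback construction has nothing to pull back along, and the intermediates witnessing non-minimality are of a different nature entirely --- e.g.\ for a summand $\V_k$ with $s<k<t$ one needs $\U_{t,s}\oplus\V_{k} <_{\deg} \U_{k,s}\oplus\V_t <_{\deg}\U_{s,t}\oplus\V_{k}$, which redistributes the vertex $k$ rather than arising from any extension of $D$. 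Your argument simply does not cover this case. (In the extension case your pullback argument is essentially the standard one, and the paper disposes of that case by citing Bongartz.)

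Second, and more seriously, the sufficiency direction asserts that the pinched hom-equalities $[X,L]=[X,D\oplus W]$ and $[L,X]=[D\oplus W,X]$ for the summands $X$ of $W$, ``combined with Krull--Schmidt and the multiplicity formulas implicit in Proposition~\ref{combihom},'' force $W$ to split off $L$. This is precisely the hard content of the theorem and it is not a formal consequence of those equalities: knowing $[X,-]$ and $[-,X]$ only for the indecomposables $X$ occurring in $W$ does not determine the multiplicity of $X$ in $L$, and the intermediate objects that must be excluded (such as $\U_{k,s}\oplus\V_t$ above) genuinely fail to contain $W$ as a summand. The paper's proof of this step, in the non-extension case, occupies Claims~1 and~2 plus a case-by-case analysis of every possible minimal disjoint piece of $M<_{\mdeg}L$ in terms of oriented link patterns; none of that is recoverable from the one-sentence appeal you make. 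You correctly flag this as ``the delicate step,'' but flagging it is not proving it. A complete argument must either reproduce that combinatorial analysis or supply a different cancellation principle valid for this algebra.
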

\begin{proof}
If the degeneration is obtained from an extension as in Proposition \ref{mindisj}, the claim follows from \cite[Theorem 4]{Bo1} (see the exact argumentation in \cite[Theorem 3.3.11]{B1}).\\[1ex]
Assume that the minimal, disjoint degeneration $D<_{\mdeg}D'$ is given by $\U_{s,t}<_{\mdeg}\U_{t,s}$, that is, by the only minimal, disjoint degeneration not obtained from an extension.\\[1ex]  
The theorem then reads as follows: $D\oplus W<_{\deg}D'\oplus W$ is minimal if and only if every indecomposable direct summand $\V_k$ of $W$ fulfills $\delta_{s<k<t}=0$ and if every indecomposable direct summand $\U_{k,l}$ of $W$ fulfills $\delta_{k<t}\delta_{s<l<t}+\delta_{s<k<t}\delta_{t<l}=0$.\\[1ex]
If $s<k<t$, then the degeneration $\U_{t,s}\oplus \V_{k}<_{\mdeg}\U_{s,t}\oplus\V_{k}$ is not minimal since
\[\U_{t,s}\oplus\V_{k} <_{\deg} \U_{k,s}\oplus\V_t <_{\deg}\U_{s,t}\oplus\V_{k}\]
are proper degenerations.\\[1ex]
If $s\neq k< t$ and $s< l<t$  (or $s<k < t$ and $l>t$, respectively), then the degeneration $\U_{t,s}\oplus \U_{k,l}<_{\deg}\U_{s,t}\oplus\U_{k,l}$ is not minimal, since 
\[\U_{t,s}\oplus\U_{k,l} <_{\deg} \U_{k,s}\oplus\U_{t,l} <_{\deg}\U_{s,t}\oplus\U_{k,l}\]
\[(\U_{t,s}\oplus\U_{k,l} <_{\deg} \U_{k,s}\oplus\U_{t,l} <_{\deg}\U_{s,t}\oplus\U_{k,l},~ \textrm{respectively})\] 
are proper degenerations.\\[1ex]
Consider $W\in\rep^{\inj}_{K}(\Q_n,I_2)$, such that $M\coloneqq\U_{t,s}\oplus W<_{\deg}M'\coloneqq\U_{s,t}\oplus W$ in $\rep^{\inj}_{K}(\Q_n,I_2)(\dfs)$ and such that every direct summand of $W$ fulfills the assumptions.\\[1ex] If the degeneration $M<_{\deg}M'$ is not minimal, then there exists a representation $L$ fulfilling $M<_{\deg}L<_{\deg}M$. Without loss of generality, we can assume $M<_{\mdeg}L$.\\[1ex]
Then $[\V_k,M]\leq [\V_k,L]\leq [\V_k,M']$ for all $k$ and we can translate the statement as follows: The source vertices to the left of $s-1$ and to the right of $t$ coincide in $\olp(M)$, $\olp(L)$ and $\olp(M')$. Also, the number of arrows coincides in all three link patterns, since  $[\V_n,M]= [\V_n,L]= [\V_n,M']$.\\[1ex]
\textbf{Claim 1}:
 Let $\U_{k,l}$ be a direct summand of $M$, $L$ or $M'$. If $l<s$ or ($k<s $ and $l> t$) or ($k>t$ and $l>t$), then $\U_{k,l}$ is a direct summand of $M$, $L$ and $M'$.\\[1ex]
The proof of Claim 1 follows directly from Corollary \ref{combihom}.\\[1ex]
\textbf{Claim 2}:
Let $\U_{k,l}$ be a direct summand of $M$, $L$ or $M'$. If $t< k$ and $s< l<t$, then $\U_{k,l}$ is a direct summand of $M$, $L$ and $M'$.
\begin{proof}[Proof of Claim 2]
Let $t< k$ and $s< l<t$ for two integers $k$ and $l$.\\[1ex]
First, we assume that $U_{k,l}$ is a direct summand of $M$, but not a direct summand of $L$. 
 Since $M<_{\mdeg}L$, the indecomposable $\U_{k,l}$ must be changed by some minimal, disjoint part of the degeneration. The only possibilities for a change like that are the following:\\[1ex]
\textbf{1st case:} The indecomposable $\U_{k',l}$ is a direct summand of $L$, such that $k\neq k'$.\\[0.4ex]
1.1. The minimal, disjoint part is $\U_{k,l}\oplus\V_{k'}<_{\mdeg}\U_{k',l}\oplus \V_k$, such that $k'<k$:\\[0.2ex]
 The indecomposable $\V_{k'}$ can only be a direct summand of $M$ if $k'<s$ or $k'>t$.\\
 If $k'<s$, we obtain $[\U_{k',t},M]<[\U_{k',t},L]$ and if $k'>t$, we obtain $[\U_{k',l},M]<[\U_{k',l},L]$, a contradiction.\\[0.4ex]
1.2. The minimal, disjoint part is $\U_{k,l}\oplus\U_{k',l'}<_{\mdeg}\U_{k',l}\oplus \U_{k,l'}$, such that $k<k'$ and $l'<l$, or such that $k'<k$ and $l<l'$:\\[0.2ex]
 The indecomposable $\U_{k',l'}$ can only be a direct summand of $M$ if $k'>t$ or $l'<s$, or if $k'<s$ and $l'>t$. As has been shown in claim 1, every indecomposable $\U_{i,j}$ with $j<s$, or with $j>t$ and $i<s$ is either a direct summand of $M$, $L$ and $M'$ or a direct summand of none of them. Thus, $k'>t$ and if $k<k'$ and $l'<l$, we obtain $[\U_{k,l'},M]<[\U_{k,l'},L]$. If $k'<k$ and $l<l'$, we obtain $[\U_{k',l},M]<[\U_{k',l},L]$, a contradiction.\\[0.4ex]
1.3. The minimal, disjoint part is $\U_{k,l}\oplus\U_{l',k'}<_{\mdeg}\U_{l',k}\oplus \U_{k',l}$:\\[0.2ex]
 The indecomposable $\U_{l',k'}$ can only be a direct summand of $M$ if $l'>t$ or $k'<s$, or if $l'<s$ and $k'>t$. As has been shown in claim 1, every indecomposable $\U_{i,j}$ with $j<s$, or with $j>t$ and $i<s$ is either a direct summand of $M$, $L$ and $M'$ or a direct summand of none of them.\\[1ex] Thus, $l'>t$ and the only cases possible are $l<l'<k'<k$ and $l<k'<k<l'$. We immediately obtain $[\U_{k',l},M]<[\U_{k',l},L]$, a contradiction.\\[0.4ex]
\textbf{2nd case:} The indecomposable $\U_{k,l'}$ is a direct summand of $L$, such that $l\neq l'$.\\[0.4ex]
2.1. The minimal, disjoint part is $\U_{k,l}\oplus\V_{l'}<_{\mdeg}\U_{k,l'}\oplus \V_l$, such that $l<l'$:\\[0.2ex]
 The indecomposable $\V_{l'}$ can only be a direct summand of $M$ if $l'<s$ or $l'>t$.\\
 Thus, $l'>t$ and we obtain  $[\U_{t,l},M]<[\U_{t,l},L]$, a contradiction.\\[0.4ex]
2.2. The minimal, disjoint part is $\U_{k,l}\oplus\U_{l',k'}<_{\mdeg}\U_{k,l'}\oplus \U_{l,k'}$:\\[0.2ex]
 The indecomposable $\U_{l',k'}$ can only be a direct summand of $M$ if $l'>t$ or $k'<s$, or if $l'<s$ and $k'>t$. As has been shown in claim 1, every indecomposable $\U_{i,j}$ with $j<s$, or with $j>t$ and $i<s$ is either a direct summand of $M$, $L$ and $M'$ or a direct summand of none of them, thus, $l'>t$. But then we obtain $[\U_{t,l},M]<[\U_{t,l},L]$ , a contradiction.\\[0.4ex]
\textbf{3rd case:} The indecomposable $\U_{l,k}$ is a direct summand of $L$.\\[0.4ex]
Then $[\U_{1,t},M]<[\U_{1,t},L]$ if $s>1$ and $[\U_{t,n},M]<[\U_{t,n},L]$ if $t<n$. Of course, if $s=1$ and $t=n>2$, no representation $W$ as given in the assumption can exist at all, a contradiction.\\[0.4ex]
The assumption that $U_{k,l}$ is a direct summand of $L$, but not a direct summand of $M$ can be contradicted by a similar argumentation.\qedhere
\end{proof}
Claim 1 and claim 2 show that all arrows $l\rightarrow k$ with $b_{k,l}(M)=b_{k,l}(M')$ and $k,l\notin\{s,t\}$ coincide in $\olp(M)$, $\olp(L)$ and $\olp(M')$. 
The minimal, disjoint piece of the degeneration $D\oplus W<_{\mdeg}L$, therefore, has to be one of the following three.
\begin{itemize}
 \item $\U_{t,s} <_{\mdeg}\U_{s,t}$: Then $L\cong M'$, a contradiction to the assumption $L<_{\deg} M'$.
\item $\U_{t,s}\oplus \V_{k'} <_{\mdeg}\U_{t,k'}\oplus \V_s$ with $k'>t$: In this case $\U_{t,k'}\oplus \V_s\nless_{\deg}\U_{s,t}\oplus \V_{k'}$ and therefore  $L\nless_{\deg} M'$, a contradiction.
\item $\U_{t,s}\oplus \V_{k'} <_{\mdeg}\U_{k',s}\oplus \V_t$ with $k'<s$: In this case $\U_{k',s}\oplus \V_t\nless_{\deg}\U_{s,t}\oplus \V_{k'}$  and therefore  $L\nless_{\deg} M'$, a contradiction.
\end{itemize}
Since we obtain a contradiction in each case, the degeneration $M<_{\deg}M'$ is minimal.
\end{proof}
Note that in the setup of Theorem \ref{indecgeneral}, the condition $[X,D]-[X,D']=0$ is sufficient in most cases. The only exceptions are the minimal, disjoint degenerations 
 $D=\U_{s,t}<_{\mdeg}\V_s\oplus \V_t=D'$, such that $s<t$, and $D=\U_{r,t}\oplus\V_s<_{\mdeg}\U_{s,t}\oplus\V_r=D'$, such that $s<r$.\\[1ex]
 The concrete minimal degenerations are obtained easily from Proposition \ref{dimhom}. Furthermore, each minimal degeneration is of codimension $1$ (which is, as well, clear from the theory of spherical varieties, see \cite{Br1}).
\subsection{Parabolic orbit closures}
In case of the action of $P$, we describe all minimal, disjoint degenerations analogously to \cite[Theorem 4.6]{BoRe}.
\begin{theorem}\label{mindisjpar}
Let $D<_{\mdeg}D'$ be a minimal, disjoint degeneration in $\rep^{\inj}_{K}(\Q_p,I_2)$. Then it either appears in \cite[Theorem 4.6]{BoRe} or in one of the following chains.

\[\xygraph{ !{<0cm,-0.13cm>;<0.63cm,-0.13cm>:<0cm,0.63cm>::} 
!{(0,0)}*+{\bullet}="1" 
"1":@(ul,ur) "1" }<_{\mdeg}
\xygraph{ !{<0cm,-0.13cm>;<0.63cm,-0.13cm>:<0cm,0.63cm>::} 
!{(0,0.03) }*+{\ddot{\bullet}}="1" 
 } \]

\[\xygraph{ !{<0cm,-0.13cm>;<0.63cm,-0.13cm>:<0cm,0.63cm>::} 
!{(-0.6,0.03) }*+{\dot{\bullet}}="1" 
!{(0,0) }*+{\bullet}="2" 
"1":@/^0.5cm/"2" 
 }<_{\mdeg}\xygraph{ !{<0cm,-0.13cm>;<0.63cm,-0.13cm>:<0cm,0.63cm>::} 
!{(-0.6,0)}*+{\bullet}="1" 
!{(0,0.03)}*+{\dot{\bullet}}="2" 
"1":@(ul,ur) "1" }<_{\mdeg}\xygraph{ !{<0cm,-0.13cm>;<0.63cm,-0.13cm>:<0cm,0.63cm>::} 
!{(-0.6,0.03) }*+{\dot{\bullet}}="1" 
!{(0,0) }*+{\bullet}="2" 
"2":@/_0.5cm/"1" 
 }\]

\[\xygraph{ !{<0cm,-0.13cm>;<0.63cm,-0.13cm>:<0cm,0.63cm>::} 
!{(-0.6,0) }*+{\bullet}="1" 
!{(0,0.03) }*+{\dot{\bullet}}="2" 
"1":@/^0.5cm/"2" 
 }<_{\mdeg}\xygraph{ !{<0cm,-0.13cm>;<0.63cm,-0.13cm>:<0cm,0.63cm>::} 
!{(-0.6,0.03)}*+{\dot{\bullet}}="1"
!{(0,0) }*+{\bullet}="2"
 "2":@(ul,ur) "2" }<_{\mdeg}\xygraph{ !{<0cm,-0.13cm>;<0.63cm,-0.13cm>:<0cm,0.63cm>::} 
!{(-0.6,0) }*+{\bullet}="1" 
!{(0,0.03) }*+{\dot{\bullet}}="2" 
"2":@/_0.5cm/"1" 
 }\]

\[ \xygraph{ !{<0cm,-0.13cm>;<0.63cm,-0.13cm>:<0cm,0.63cm>::} 
!{(-0.6,0) }*+{\bullet}="1" 
!{(0,0) }*+{\bullet}="2" 
"1":@/^0.5cm/"2" 
"1":@/_0.5cm/"2" 
 }<_{\mdeg} \xygraph{ !{<0cm,-0.13cm>;<0.63cm,-0.13cm>:<0cm,0.63cm>::} 
!{(-0.6,0) }*+{\bullet}="1" 
!{(0,0) }*+{\bullet}="2" 
"1":@/_0.5cm/"2" 
"2":@/_0.5cm/"1"}<_{\mdeg}
\xygraph{ !{<0cm,-0.13cm>;<0.63cm,-0.13cm>:<0cm,0.63cm>::} 
!{(-0.6,0)}*+{\bullet}="1"
!{(0,0) }*+{\bullet}="2"
 "1":@(ul,ur) "1"
 "2":@(ul,ur) "2" }<_{\mdeg}
\xygraph{ !{<0cm,-0.13cm>;<0.63cm,-0.13cm>:<0cm,0.63cm>::} 
!{(-0.6,0) }*+{\bullet}="1" 
!{(0,0) }*+{\bullet}="2" 
"2":@/^0.5cm/"1" 
"2":@/_0.5cm/"1" 
 }\]

\[\xygraph{!{<0cm,-0.13cm>;<0.63cm,-0.13cm>:<0cm,0.63cm>::} 
!{(-0.6,0) }*+{\bullet}="1" 
!{(0,0) }*+{\bullet}="2" 
!{(0.6,0) }*+{\bullet}="3" 
"1":@/^0.5cm/"3" 
"2":@/^0.5cm/"1"}<_{\mdeg} \xygraph{!{<0cm,-0.13cm>;<0.63cm,-0.13cm>:<0cm,0.63cm>::} 
!{(-0.6,0)}*+{\bullet}="1" 
!{(0,0)}*+{\bullet}="2" 
!{(0.6,0)}*+{\bullet}="3" 
 "1":@(ul,ur) "1"
"2":@/^0.5cm/"3"}
 <_{\mdeg}\xygraph{!{<0cm,-0.13cm>;<0.63cm,-0.13cm>:<0cm,0.63cm>::} 
!{(-0.6,0) }*+{\bullet}="1" 
!{(0,0) }*+{\bullet}="2" 
!{(0.6,0) }*+{\bullet}="3" 
"3":@/_0.5cm/"1" 
"1":@/_0.5cm/"2"}<_{\mdeg} \xygraph{!{<0cm,-0.13cm>;<0.63cm,-0.13cm>:<0cm,0.63cm>::} 
!{(-0.6,0) }*+{\bullet}="1"
!{(0,0) }*+{\bullet}="2"  
!{(0.6,0)}*+{\bullet}="3" 
 "1":@(ul,ur) "1"
"3":@/_0.5cm/"2"} 
 <_{\mdeg}\xygraph{!{<0cm,-0.13cm>;<0.63cm,-0.13cm>:<0cm,0.63cm>::} 
!{(-0.6,0) }*+{\bullet}="1" 
!{(0,0) }*+{\bullet}="2" 
!{(0.6,0) }*+{\bullet}="3" 
"2":@/^0.5cm/"1" 
"3":@/_0.5cm/"1"}\]

\[\xygraph{!{<0cm,-0.13cm>;<0.63cm,-0.13cm>:<0cm,0.63cm>::} 
!{(-0.6,0) }*+{\bullet}="1" 
!{(0,0) }*+{\bullet}="2" 
!{(0.6,0) }*+{\bullet}="3" 
 "2":@(ul,ur) "2"
"1":@/_0.5cm/"3" }<_{\mdeg}\xygraph{!{<0cm,-0.13cm>;<0.63cm,-0.13cm>:<0cm,0.63cm>::} 
!{(-0.6,0) }*+{\bullet}="1" 
!{(0,0) }*+{\bullet}="2" 
!{(0.6,0) }*+{\bullet}="3" 
"1":@/^0.5cm/"2" 
"2":@/_0.5cm/"3"}<_{\mdeg}
\left\lbrace \begin{array}{l}
\xygraph{!{<0cm,-0.13cm>;<0.63cm,-0.13cm>:<0cm,0.63cm>::} 
!{(-0.6,0) }*+{\bullet}="1" 
!{(0,0) }*+{\bullet}="2" 
!{(0.6,0) }*+{\bullet}="3" 
"2":@/_0.5cm/"1" 
"2":@/_0.5cm/"3"} \\
\xygraph{!{<0cm,-0.13cm>;<0.63cm,-0.13cm>:<0cm,0.63cm>::} 
!{(-0.6,0) }*+{\bullet}="1" 
!{(0,0) }*+{\bullet}="2" 
!{(0.6,0) }*+{\bullet}="3" 
"1":@/^0.5cm/"2" 
"3":@/^0.5cm/"2"}             \end{array}\right\rbrace <_{\mdeg}\xygraph{!{<0cm,-0.13cm>;<0.63cm,-0.13cm>:<0cm,0.63cm>::} 
!{(-0.6,0) }*+{\bullet}="1" 
!{(0,0) }*+{\bullet}="2" 
!{(0.6,0) }*+{\bullet}="3" 
"2":@/_0.5cm/"1" 
"3":@/^0.5cm/"2"} <_{\mdeg}\xygraph{!{<0cm,-0.13cm>;<0.63cm,-0.13cm>:<0cm,0.63cm>::} 
!{(-0.6,0) }*+{\bullet}="1" 
!{(0,0) }*+{\bullet}="2" 
!{(0.6,0) }*+{\bullet}="3" 
 "2":@(ul,ur) "2"
"3":@/^0.5cm/"1" }\]

\[\xygraph{!{<0cm,-0.13cm>;<0.63cm,-0.13cm>:<0cm,0.63cm>::} 
!{(-0.6,0) }*+{\bullet}="1" 
!{(0,0) }*+{\bullet}="2" 
!{(0.6,0) }*+{\bullet}="3" 
"1":@/^0.5cm/"3" 
"2":@/_0.5cm/"3"}<_{\mdeg} 
\xygraph{!{<0cm,-0.13cm>;<0.63cm,-0.13cm>:<0cm,0.63cm>::} 
!{(-0.6,0)}*+{\bullet}="1"
!{(0,0)}*+{\bullet}="2" 
!{(0.6,0)}*+{\bullet}="3" 
"1":@/^0.5cm/"2"
 "3":@(ul,ur) "3"}
 <_{\mdeg}
\xygraph{!{<0cm,-0.13cm>;<0.63cm,-0.13cm>:<0cm,0.63cm>::} 
!{(-0.6,0) }*+{\bullet}="1" 
!{(0,0) }*+{\bullet}="2" 
!{(0.6,0) }*+{\bullet}="3" 
"1":@/^0.5cm/"3" 
"3":@/^0.5cm/"2"}<_{\mdeg} 
\xygraph{!{<0cm,-0.13cm>;<0.63cm,-0.13cm>:<0cm,0.63cm>::} 
!{(-0.6,0) }*+{\bullet}="1" 
!{(0,0) }*+{\bullet}="2" 
!{(0.6,0) }*+{\bullet}="3" 
"2":@/_0.5cm/"3" 
"3":@/_0.5cm/"1"}<_{\mdeg} 
\xygraph{!{<0cm,-0.13cm>;<0.63cm,-0.13cm>:<0cm,0.63cm>::} 
!{(-0.6,0) }*+{\bullet}="1" 
!{(0,0) }*+{\bullet}="2" 
!{(0.6,0) }*+{\bullet}="3" 
"2":@/_0.5cm/"1"
 "3":@(ul,ur) "3"}
 <_{\mdeg}
\xygraph{!{<0cm,-0.13cm>;<0.63cm,-0.13cm>:<0cm,0.63cm>::} 
!{(-0.6,0) }*+{\bullet}="1" 
!{(0,0) }*+{\bullet}="2" 
!{(0.6,0) }*+{\bullet}="3" 
"3":@/^0.5cm/"2" 
"3":@/_0.5cm/"1"}\]
\end{theorem}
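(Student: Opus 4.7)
The plan is to follow the template of \cite[Theorem 4.6]{BoRe}, invoking Proposition \ref{mindisj}: any minimal disjoint degeneration $D<_{\mdeg}D'$ in $\rep^{\inj}_{K}(\Q_p,I_2)$ has $D'$ either indecomposable or of the form $U\oplus V$ with $U,V$ indecomposable, in which case $D$ sits as the middle term of a non-split short exact sequence between $U$ and $V$ in one of the two orders. The Borel list of \cite[Theorem 4.6]{BoRe} exhausts every case in which no two indecomposable summands touch a common vertex and no loop $\U_{i,i}$ or dot $\V_i$ appears, because the Borel hypothesis $b_i=1$ forbids all three. It therefore suffices to enumerate the minimal disjoint degenerations that only arise once one allows dots, loops, or multiple incidences at vertices with $b_i\geq 2$.

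First, I would list, by a direct inspection of the indecomposables of Lemma \ref{indec2nilp}, all non-split short exact sequences $0\to X\to Y\to Z\to 0$ in which at least one of $X,Y,Z$ is either a $\V_i$ or a loop $\U_{i,i}$. Reading off the explicit form of the $\U_{i,j}$ and $\V_i$ and the action of $\alpha$ modulo $\alpha^2$, the genuinely new atoms turn out to be of two flavours: extensions $0\to\V_j\to\U_{i,j}\to\V_i\to 0$, producing the atomic degeneration $\U_{i,j}<_{\deg}\V_i\oplus\V_j$ (including the loop case $i=j$), and extensions that exchange an arrow endpoint with a dot or a loop, producing the swap-type degenerations visible in the displayed chains, such as $\V_s\oplus\U_{t,s}<_{\deg}\V_t\oplus\U_{s,s}$.

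Second, I would verify that each displayed step is a degeneration and that it is minimal by means of the Hom-order characterisation: since $\A(p,2)$ is representation-finite by Lemma \ref{indec2nilp}, the degeneration order $\leq_{\deg}$ coincides with the Hom-order. In practice, by Proposition \ref{combihom} combined with the formula
\[[M,M']=\sum_{i,j=1}^p m_{i,j}b_{i,j}(M')+\sum_{k=1}^p n_k a_k(M'),\]
the order is controlled by the monotonicity of the statistics $a_k(\cdot)$ and $b_{k,l}(\cdot)$ that can be read directly off the enhanced oriented link patterns. For each consecutive pair in the listed chains I would inspect the at most three vertices where the patterns differ, verify that the Hom-statistics increase weakly with a single strict increase, and apply Theorem \ref{indecgeneral} to exclude any intermediate isomorphism class.

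The main obstacle is the combinatorial exhaustiveness: one must rule out systematically that any minimal disjoint degeneration involving a dot, a loop, or a doubly-incident vertex lies outside the displayed chains. This proceeds by a case analysis on the at most two indecomposable summands of $D'$, combined with Proposition \ref{mindisj} to bound the support of the local change to at most four vertices of the enhanced oriented link pattern, and matching each possibility against an entry either of the Borel list in \cite[Theorem 4.6]{BoRe} or of the new chains. The argument is strictly parallel to the detailed enumeration in \cite[Theorem 4.6]{BoRe}, the novelty being the richer combinatorics of enhanced oriented link patterns in which dots and arrow endpoints coexist at parabolic blocks of size $\geq 2$.
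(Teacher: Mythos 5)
Your overall strategy is the one the paper intends: the paper itself offers no written proof of Theorem \ref{mindisjpar} and simply declares the description ``analogous to \cite[Theorem 4.6]{BoRe}'', and that analogy is exactly the combination you propose --- reduce via Proposition \ref{mindisj} to $D'$ indecomposable or a two-summand extension, enumerate the non-split short exact sequences among the indecomposables of Lemma \ref{indec2nilp}, and certify each step and its minimality through the Hom-statistics $a_k$, $b_{k,l}$ of Proposition \ref{combihom} together with Theorem \ref{indecgeneral}. Two small corrections, though. First, your dividing line between ``old'' and ``new'' cases is misdrawn: the Borel hypothesis $b_i=1$ does \emph{not} forbid dots --- oriented link patterns have fixed vertices, and degenerations such as $\U_{s,t}<_{\mdeg}\V_s\oplus\V_t$ with $s\neq t$ already appear in \cite[Theorem 4.6]{BoRe} (the paper even singles them out right after Theorem \ref{indecgeneral}). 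What is genuinely new in the parabolic setting is only what requires $b_i\geq 2$ at some vertex: loops $\U_{i,i}$, several dots at one vertex, and several arrow/dot incidences at one vertex --- which is precisely what the displayed chains exhibit. If you run your enumeration with the case split as you stated it, you will produce ``new'' atoms (like $\U_{i,j}<_{\deg}\V_i\oplus\V_j$, $i\neq j$) that are absent from the displayed chains and conclude, wrongly, that the list is incomplete. Second, justifying the identification of $\leq_{\deg}$ with the Hom-order by representation-finiteness alone is not sufficient in general; here it should be cited as Theorem \ref{pq} (i.e.\ \cite[Theorem 4.3]{BoRe}), which establishes it for this particular algebra.
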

These minimal, disjoint degenerations yields concrete descriptions of the orbit closures in terms of enhanced oriented link patterns right away. 
\section{A finiteness criterion}\label{fincrit}
We consider the $P$-action on $\N_n^{(x)}$ and prove a criterion as to whether the action admits finitely many or infinitely many orbits. We call the parabolic subgroup $P$ maximal, if it is given by $2$ blocks $(b_1,b_2)$.
\begin{theorem}\label{classfinpar}
 There are only finitely many $P$-orbits in $\N_n^{(x)}$ if and only if $x\leq 2$, or $P$ is maximal and $x=3$.
\end{theorem}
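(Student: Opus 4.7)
I would translate the orbit-counting problem via Lemma \ref{bijection} into a representation-theoretic one: the number of $P$-orbits in $\N_n^{(x)}$ equals the number of isomorphism classes in $\rep^{\inj}_K(\Q_p, I_x)(\dfp)$, and by Krull--Remak--Schmidt this is finite if and only if only finitely many indecomposables of $\A(p,x)=K\Q_p/I_x$ appear as summands of representations of dimension vector $\dfp$. Thus the question reduces to identifying the representation type of $\A(p,x)$ relative to $\dfp$.

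For the ``if'' direction, $x=1$ is trivial since then $I_1$ kills the loop and $\Q_p$ becomes the Dynkin quiver of type $A_p$. The case $x=2$ follows from Theorem \ref{paraboliccase}, which parametrizes the orbits by enhanced oriented link patterns of type $(b_1,\ldots,b_p)$. For $p=2, x=3$, the algebra $\A(2,3)$ is the bounded path algebra of $1\xrightarrow{\beta} 2$ with loop $\alpha$ at vertex $2$ subject to $\alpha^3=0$. I would compute its Auslander--Reiten quiver by knitting from the indecomposable projectives (or, equivalently, via a covering by a representation-finite hereditary algebra) and verify that it has finite representation type. Concretely, every indecomposable object of $\rep^{\inj}_K(\Q_2,I_3)$ is a pair $(V_1\hookrightarrow V_2,\alpha)$ with $V_2$ a single Jordan block $J_k$, $k\leq 3$, and $V_1$ a single $Z_{\GL(V_2)}(\alpha)$-orbit of subspaces of $V_2$; this gives a finite explicit list, and finiteness of isomorphism classes in dimension vector $\dfp$ is immediate.

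For the ``only if'' direction I would exhibit one-parameter families $\{M_\lambda\}_{\lambda\in K}$ of pairwise non-isomorphic objects of $\rep^{\inj}_K(\Q_p, I_x)(\dfp)$. For $p=2, x\geq 4$, one takes $V_2$ containing $J_4\oplus J_4$ and varies a four-dimensional $V_1\subseteq V_2$ within a Grassmannian whose dimension exceeds that of the centralizer of $\alpha=J_4\oplus J_4$, which forces continuous moduli. For $p\geq 3, x\geq 3$, the loop at vertex $p$ combined with the embedding $\epsilon_{p-1}$ already supports a family: with $V_p$ carrying a $3$-nilpotent endomorphism of Jordan type $(2,2)$ and $V_{p-1}\hookrightarrow V_p$ varying in a pencil of subspaces, non-isomorphism is detected by a varying cross-ratio-type invariant. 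For arbitrary admissible block sizes, one extends these families by direct sum with the indecomposables of Lemma \ref{indec2nilp}, which lie in $\rep^{\inj}_K(\Q_p,I_x)$ for all $x\geq 2$.

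The main obstacle is the sharpness of the finiteness threshold at $p=2, x=3$: one must verify that the continuous moduli which appear for $x\geq 4$ or $p\geq 3$ genuinely collapse when both bounds are saturated. This amounts to a careful stabilizer/dimension count on Grassmannians of subspaces of a single $K[\alpha]/(\alpha^3)$-module, and is exactly the point where the centralizer of $\alpha$ is large enough relative to the Grassmannian to produce only finitely many orbits. Pairwise non-isomorphism inside the families above is then routinely certified by computing Hom-dimension invariants in the spirit of Proposition \ref{dimhom}.
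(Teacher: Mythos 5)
Your reduction to representation type and your treatment of the finiteness direction essentially coincide with the paper's: the case $x\le 2$ is Theorem \ref{paraboliccase}, and for maximal $P$ with $x=3$ the paper likewise computes the Auslander--Reiten quiver of $\A(2,3)$ via a covering quiver and the knitting process. The divergence -- and the gaps -- are in the infiniteness direction, where the paper writes down explicit one-parameter families of matrices while you argue by moduli counts. Your family for $p\ge 3$, $x\ge 3$ cannot work as described: an endomorphism of Jordan type $(2,2)$ squares to zero, so any flag equipped with such an endomorphism corresponds to a point of $\N_n^{(2)}$, and by Theorem \ref{paraboliccase} these fall into finitely many $P$-orbits for \emph{every} parabolic. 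Hence no pencil of subspaces over a type-$(2,2)$ nilpotent yields pairwise non-isomorphic objects, and no cross-ratio-type invariant can vary. A Jordan block of size $3$ is indispensable here; the paper's family $D_x(\lambda)$ is built from a matrix of nilpotency degree exactly $3$ interacting with at least three flag steps.

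For maximal $P$ and $x\ge 4$ there are two further problems. The numerical one: $\dim \mathrm{Gr}(4,8)=16$ \emph{equals} the dimension of the centralizer of $J_4\oplus J_4$ in $\GL_8$ rather than exceeding it, so you must additionally observe that scalars act trivially on subspaces before concluding that continuous moduli appear; this is fixable. The structural one is not: your family requires $b_1\ge 4$, and adding direct summands from Lemma \ref{indec2nilp} can only enlarge the dimension vector -- it cannot shrink the first block to size $1$, $2$ or $3$. The paper instead uses a regular nilpotent $J_4$ together with a $2$-dimensional subspace piece (covering all $b_1,b_2\ge 2$) and a separate explicit family for block sizes $(1,n-1)$. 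That last case is beyond the reach of any Grassmannian-versus-centralizer estimate: pairs consisting of a nilpotent endomorphism and a single line form the projectivized enhanced nilpotent cone, whose $\GL_n$-orbits are finite, so for $b_1=1$ a dimension count will never produce a family and a completely different (and quite delicate) argument is required. Until the small-block cases are supplied, the ``only if'' direction of your proof is incomplete.
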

\begin{proof}
If $x=2$, our considerations in Section \ref{x2} yield finiteness for every parabolic subgroup $P$.\\[1ex]
If $P$ is maximal of block sizes $(b_1,b_2)$ and $x=3$, then in order to calculate a system of representatives of the indecomposable representations, it suffices to calculate those of the quiver
\begin{center}\begin{tikzpicture}
\matrix (m) [matrix of math nodes, row sep=1em,
column sep=3em, text height=1.5ex, text depth=0.2ex]
{ & \bullet  & \bullet \\ 
 \Q(4)\colon & \bullet  & \bullet \\
 & \bullet  & \bullet \\
 & \bullet  & \bullet \\};
\path[->]
(m-1-2) edge  (m-1-3)
(m-2-2) edge   (m-2-3)
(m-3-2) edge  (m-3-3)
(m-4-2) edge  (m-4-3)
(m-1-3) edge node[right=0.05cm] {$\alpha_{1}$} (m-2-3)
(m-2-3) edge node[right=0.05cm] {$\alpha_2$} (m-3-3)
(m-3-3) edge node[right=0.05cm] {$\alpha_{3}$} (m-4-3);\end{tikzpicture}\end{center}
with the relation  $\alpha_{3}\alpha_{2}\alpha_1=0$ which follows from the before mentioned covering theory of quivers. These can be calculated by using the knitting process and the  quiver in Figure \ref{fig1} is obtained. Note that we directly delete zero rows in the dimension vectors.\\[1ex]
\begin{figure}[t]
\setlength{\unitlength}{0.8mm}
\begin{picture}(30,20)(-8,6)
\multiput(21,-65)(0,2){42}{\line(0,1){1}}
\multiput(117,-65)(0,2){42}{\line(0,1){1}}
 \put(-4,6){$\searrow$}
\put(-4,-6){$\nearrow$}\put(-4,-18){$\searrow$}\put(-5,-12){$\rightarrow$}
\put(-4,-30){$\nearrow$}
  \put(0,0){\begin{tiny}$\begin{array}{l}
1 2 \\ 
1 2 \\ 
1 2  \end{array}$\end{tiny}}\put(8,6){$\nearrow$}\put(8,-6){$\searrow$}
  \put(0,-12){\begin{tiny}$\begin{array}{l}
1 1 \\ 
1 2 \\ 
0 1  \end{array}$\end{tiny}}\put(7,-12){$\rightarrow$}
  \put(0,-24){\begin{tiny}$\begin{array}{l}
0 1 \\ 
1 2 \\ 
1 1  \end{array}$\end{tiny}}\put(8,-18){$\nearrow$}\put(8,-30){$\searrow$}
  \put(12,12){\begin{tiny}$\begin{array}{l}
1 1 \\ 
0 1 \\ 
1 1  \end{array}$\end{tiny}}
  \put(12,-12){\begin{tiny}$\begin{array}{l}
1 2 \\ 
2 3 \\ 
1 2  \end{array}$\end{tiny}}
  \put(12,-36){\begin{tiny}$\begin{array}{l}
0 1 \\ 
0 1  \end{array}$\end{tiny}}
  \put(20,6){$\searrow$}

 \put(20,-6){$\nearrow$}\put(20,-18){$\searrow$}\put(19,-12){$\rightarrow$}

  \put(20,-30){$\nearrow$}\put(20,-42){$\searrow$}
    \put(24,0){\begin{tiny}$\begin{array}{l}
1 1 \\ 
1 2 \\ 
1 1  \end{array}$\end{tiny}}\put(32,6){$\nearrow$}\put(32,-6){$\searrow$}
  \put(24,-12){\begin{tiny}$\begin{array}{l}
0 1 \\ 
1 1 \\ 
1 1  \end{array}$\end{tiny}}\put(31,-12){$\rightarrow$}
  \put(24,-24){\begin{tiny}$\begin{array}{l}
1 2 \\ 
1 2 \\ 
0 1  \end{array}$\end{tiny}}\put(32,-18){$\nearrow$}\put(32,-30){$\searrow$}
  \put(24,-48){\begin{tiny}$\begin{array}{l}
0 1 \\ 
0 1 \\ 
0 1  \end{array}$\end{tiny}}\put(32,-42){$\nearrow$}\put(32,-54){$\searrow$}
   \put(36,12){\begin{tiny}$\begin{array}{l}
1 1   \end{array}$\end{tiny}}\put(44,6){$\searrow$}

  \put(36,-12){\begin{tiny}$\begin{array}{l}
1 2 \\ 
1 2 \\ 
1 1  \end{array}$\end{tiny}}\put(44,-6){$\nearrow$}\put(44,-18){$\searrow$}\put(43,-12){$\rightarrow$}

  \put(36,-36){\begin{tiny}$\begin{array}{l}
0 1 \\ 
1 2 \\ 
1 2 \\
0 1 \end{array}$\end{tiny}}\put(44,-30){$\nearrow$}\put(44,-42){$\searrow$}
  \put(36,-60){\begin{tiny}$\begin{array}{l}
1 1 \\ 
0 1 \\ 
0 1  \end{array}$\end{tiny}}\put(44,-54){$\nearrow$}
  \put(48,0){\begin{tiny}$\begin{array}{l}
0 1 \\ 
1 1   \end{array}$\end{tiny}}\put(56,6){$\nearrow$}\put(56,-6){$\searrow$}
  \put(48,-12){\begin{tiny}$\begin{array}{l}
1 1 \\ 
0 1   \end{array}$\end{tiny}}\put(55,-12){$\rightarrow$}
  \put(48,-24){\begin{tiny}$\begin{array}{l}
0 1 \\ 
1 2 \\ 
1 2 \\
1 1 \end{array}$\end{tiny}}\put(56,-18){$\nearrow$}\put(56,-30){$\searrow$}

  \put(48,-48){\begin{tiny}$\begin{array}{l}
1 1 \\ 
1 2 \\ 
1 2 \\
0 1 \end{array}$\end{tiny}}\put(56,-42){$\nearrow$}\put(56,-54){$\searrow$}
  \put(60,12){\begin{tiny}$\begin{array}{l}
0 1  \end{array}$\end{tiny}}\put(68,6){$\searrow$}

  \put(60,-12){\begin{tiny}$\begin{array}{l}
0 1 \\ 
1 2 \\ 
1 2  \end{array}$\end{tiny}}\put(68,-6){$\nearrow$}\put(68,-18){$\searrow$}\put(67,-12){$\rightarrow$}

\put(60,-36){\begin{tiny}$\begin{array}{l}
1 1 \\ 
1 2 \\ 
1 2 \\
1 1 \end{array}$\end{tiny}}\put(68,-30){$\nearrow$}\put(68,-42){$\searrow$}
  \put(60,-60){\begin{tiny}$\begin{array}{l}
1 1 \\ 
1 1 \\ 
0 1  \end{array}$\end{tiny}}\put(68,-54){$\nearrow$}
  \put(72,0){\begin{tiny}$\begin{array}{l}
0 1 \\ 
1 2 \\ 
0 1  \end{array}$\end{tiny}}\put(80,6){$\nearrow$}\put(80,-6){$\searrow$}
  \put(72,-12){\begin{tiny}$\begin{array}{l}
0 1 \\ 
0 1 \\ 
1 1  \end{array}$\end{tiny}}\put(79,-12){$\rightarrow$}
  \put(72,-24){\begin{tiny}$\begin{array}{l}
1 1 \\ 
1 2 \\ 
1 2  \end{array}$\end{tiny}}\put(80,-18){$\nearrow$}\put(80,-30){$\searrow$}

  \put(72,-48){\begin{tiny}$\begin{array}{l}
1 1 \\ 
1 1 \\ 
1 1  \end{array}$\end{tiny}}\put(80,-42){$\nearrow$}\put(80,-54){$\searrow$}
   \put(84,12){\begin{tiny}$\begin{array}{l}
0 1 \\ 
1 1 \\ 
0 1  \end{array}$\end{tiny}}\put(92,6){$\searrow$}

  \put(84,-12){\begin{tiny}$\begin{array}{l}
1 2 \\ 
1 3 \\ 
1 2  \end{array}$\end{tiny}}\put(92,-6){$\nearrow$}\put(92,-18){$\searrow$}\put(91,-12){$\rightarrow$}

\put(84,-36){\begin{tiny}$\begin{array}{l}
1 1 \\ 
1 1  \end{array}$\end{tiny}}\put(92,-30){$\nearrow$}
  \put(84,-60){\begin{tiny}$\begin{array}{l}
1 0   \end{array}$\end{tiny}}

  \put(96,0){\begin{tiny}$\begin{array}{l}
1 2 \\ 
1 2 \\ 
1 2  \end{array}$\end{tiny}}\put(104,6){$\nearrow$}\put(104,-6){$\searrow$}
  \put(96,-12){\begin{tiny}$\begin{array}{l}
1 1 \\ 
1 2 \\ 
0 1  \end{array}$\end{tiny}}\put(103,-12){$\rightarrow$}
  \put(96,-24){\begin{tiny}$\begin{array}{l}
0 1 \\ 
1 2 \\ 
1 1  \end{array}$\end{tiny}}\put(104,-18){$\nearrow$}\put(104,-30){$\searrow$}

   \put(108,12){\begin{tiny}$\begin{array}{l}
1 1 \\ 
0 1 \\ 
1 1  \end{array}$\end{tiny}}\put(116,6){$\searrow$}

  \put(108,-12){\begin{tiny}$\begin{array}{l}
1 2 \\ 
2 3 \\
1 2  \end{array}$\end{tiny}}\put(116,-6){$\nearrow$}\put(116,-18){$\searrow$}\put(115,-12){$\rightarrow$}

\put(108,-36){\begin{tiny}$\begin{array}{l}
0 1 \\ 
0 1  \end{array}$\end{tiny}}\put(116,-30){$\nearrow$}\put(116,-42){$\searrow$}
  
  \put(120,0){\begin{tiny}$\begin{array}{l}
1 1 \\ 
1 2 \\ 
1 1  \end{array}$\end{tiny}}\put(128,6){$\nearrow$}\put(128,-6){$\searrow$}
  \put(120,-12){\begin{tiny}$\begin{array}{l}
0 1 \\ 
1 1 \\ 
1 1  \end{array}$\end{tiny}}\put(127,-12){$\rightarrow$}
  \put(120,-24){\begin{tiny}$\begin{array}{l}
1 2 \\ 
1 2 \\ 
0 1  \end{array}$\end{tiny}}\put(128,-18){$\nearrow$}\put(128,-30){$\searrow$}

  \put(120,-48){\begin{tiny}$\begin{array}{l}
0 1 \\ 
0 1 \\ 
0 1  \end{array}$\end{tiny}}\put(128,-42){$\nearrow$}\put(128,-54){$\searrow$}

  \put(132,12){\begin{tiny}$\begin{array}{l}
1 1  \end{array}$\end{tiny}}\put(140,6){$\searrow$}

  \put(132,-12){\begin{tiny}$\begin{array}{l}
1 2 \\ 
1 2 \\ 
1 1  \end{array}$\end{tiny}}\put(140,-6){$\nearrow$}\put(140,-18){$\searrow$}\put(139,-12){$\rightarrow$}

  \put(132,-36){\begin{tiny}$\begin{array}{l}
0 1 \\ 
1 2 \\ 
1 2 \\
0 1  \end{array}$\end{tiny}}\put(140,-30){$\nearrow$}\put(140,-42){$\searrow$}
  \put(132,-60){\begin{tiny}$\begin{array}{l}
1 1 \\ 
0 1 \\ 
0 1  \end{array}$\end{tiny}}\put(140,-54){$\nearrow$}
\end{picture}\rule{3mm}{0mm}\vspace{6.2cm}
\caption{The Auslander-Reiten quiver $\Gamma(\Q,I)$}\label{fig1}
\end{figure}
There are only finitely many isomorphism classes of indecomposable representations, thus the translation to the $P$-orbits in $\N_n^{(3)}$ which are given by the isomorphism classes in $\rep_K^{\inj}(\Q_2,I_3)(b_1,n)$ yields finitely many orbits. The orbits and their closures as well as the open orbit are described explicitly in \cite[Section 4.1]{B1}.\\[2ex]
Now let $P$ be a non-maximal parabolic subgroup and let $x\geq 3$. The action of $P$ on $\N_n^{(x)}$ admits infinitely many orbits, because 
\[(D_x(\lambda))_{i,j}=\left\lbrace\begin{array}{ll}
\lambda, & \textrm{if}~i=n~\textrm{and}~j=1; \\ 
1, & \textrm{if}~(1\leq i<n~\textrm{and}~j=1)~ \textrm{or}~(i=n~\textrm{and}~1\leq j<n);\\ 
0, & \textrm{otherwise}. 
                                   \end{array}\right.\]
yields a $1$-parameter family of pairwise non-$P$-conjugate matrices for $\lambda\in K^*$.\\[1ex]
If $P$ is a maximal parabolic subgroup of block sizes $(x,y)$, then the action of $P$ on $\N_n^{(4)}$  admits infinitely many orbits:
\begin{enumerate}
 \item If $x=s+2\geq 2$ and $y=t+2\geq 2$ for $s,t\leq 0$, then the matrices
\[(E^s(n,\lambda))_{i,j}:=\left\lbrace\begin{array}{ll} 
(E(\lambda))_{i-s,j-s}, & \textrm{if}~s+1\leq i,j\leq s+4; \\ 
0, & \textrm{otherwise}.
                                   \end{array}\right.\]
where \[E(\lambda)\coloneqq\left( \begin{array}{cccc}
0 & 0 & 0 & 0 \\ 
1 & 0 & 0 & 0 \\ 
1 & 1 & 0 & 0 \\ 
\lambda & 1 & 1 & 0
    \end{array}\right)\]
for $\lambda\in K^*$, induce a $1$-parameter family of pairwise non-$P$-conjugate matrices.
\item If (without loss of generality) $x=1$ and $y=n-1$, then for $\lambda\in K^*$, the matrices 
\[(F(n,\lambda))_{i,j}=\left\lbrace\begin{array}{ll}
(F(\lambda))_{i,j}, & \textrm{if}~1\leq i,j\leq 4; \\ 
0, & \textrm{otherwise}.
                                   \end{array}\right.\]
where \[F(\lambda)\coloneqq\left( \begin{array}{cccc}
1 & 1 & 0 & 0 \\ 
-1 & -1 & 0 & 0 \\ 
\lambda-1 &  \lambda & -1 & 1 \\ 
\lambda &\lambda-1 & -1 & 1
    \end{array}\right)\]
induce a $1$-parameter family of pairwise non-$P$-conjugate matrices.\qedhere
\end{enumerate}
 \end{proof}
Note that the algebra $\A(p,x)$ is either of finite or of wild representation type, but never of infinite tame representation type.
\section{Generic normal forms in the nilpotent cone}\label{gnfsect}
We discuss the $P$-action  on the nilpotent cone $\N:=\N_n^{(n)}$ now and introduce a generic normal form. We, thereby, generalize a generic normal form for the orbits of the Borel-action which is introduced in \cite{BoRe,Hal}. \\[1ex]
 Let $V$ be an $n$-dimensional $K$-vector space and denote the space of partial $p$-step flags of dimensions $\df$ by $\Fa_{\dfp}(V)$, that is, $\Fa_{\dfp}(V)$ contains flags
\[(0=F_0\subset F_1\subset \punkte \subset F_{p-2} \subset F_{p-1} \subset F_{p}=V),\]
such that $\dim_K F_i=d_i$. Let $\varphi$ be a nilpotent endomorphism of $V$ and consider pairs of a nilpotent endomorphism and a $p$-step flag up to base change in $V$, that is, up to the $\GL(V)$-action via $g.(F_*,\varphi) = (gF_*, g\varphi g^{-1})$.\\[1ex]
Let us fix a partial flag $F_*\in \Fa_{\dfp}(V)$ and a nilpotent endomorphism $\varphi$ of $V$.
\begin{lemma}\label{pargenlem}
 The following properties of the pair $(F_*, \varphi)$ are equivalent:
\begin{enumerate}
 \item $\dim_K \varphi^{n-d_k}(F_k)=d_k$ for every $k\in\{0,\punkte, p\}$,
\item there exists a basis $w_1,\punkte, w_n$ of $V$, such that  for  all    $ k\in\{1,\punkte, p\}$:
\begin{enumerate} 
 \item[($ \textrm{a}_k$)]  $F_k=\left\langle w_1,\punkte,w_{d_k}\right\rangle$
\end{enumerate}
 and for every  $ k\in\{2,\punkte, p\}$:
\begin{enumerate} 
\item[($ \textrm{b}_k$)]  $\varphi(w_x)=\left\lbrace \begin{array}{ll} 

w_{x+1}\mod\left\langle w_{d_1+2},\punkte,w_n\right\rangle, &  \textrm{if}~x<d_1;\\
w_{d_{k-1}+1} \mod \left\langle w_{d_{k}+1},\punkte, w_n\right\rangle, &  \textrm{if}~x=d_{k-1};\\
w_{x+1} \mod \left\langle w_{d_k+1},\punkte, w_n\right\rangle, &  \textrm{if}~d_{k-1}<x< d_k;\\
0, & \textrm{if}~ x=n.
                           \end{array}\right. $
\end{enumerate}
\end{enumerate}
\end{lemma}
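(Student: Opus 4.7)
For $(2) \Rightarrow (1)$, I argue by induction on $j$ that $\varphi^j(w_i) \in w_{i+j} + \langle w_l : l > i+j\rangle$ whenever $i + j \leq n$. The base case $j = 1$ reads off directly from $(b_k)$, noting that in each case the correction subspace is contained in $\langle w_l : l > i+1\rangle$; the inductive step applies $\varphi$ once more and uses that $\varphi$ sends $\langle w_l : l > m\rangle$ into $\langle w_l : l > m+1\rangle$. Taking $j = n - d_k$ and letting $i$ run over $\{1,\ldots, d_k\}$, the images $\varphi^{n-d_k}(w_i)$ have pairwise distinct leading terms $w_{n-d_k+1},\ldots, w_n$ and are therefore linearly independent, giving $\dim\varphi^{n-d_k}(F_k) = d_k$.

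For $(1) \Rightarrow (2)$, I first note that $(1)$ forces $\varphi$ to be a regular nilpotent: otherwise $\dim\ker\varphi^{n-d_k}$ strictly exceeds $n-d_k$ for some $k<p$, and a dimension count inside $V$ forces $F_k\cap\ker\varphi^{n-d_k}\neq 0$, contradicting injectivity of $\varphi^{n-d_k}|_{F_k}$. With $\varphi$ regular, $V = F_k\oplus\ker\varphi^{n-d_k}$ for each $k$, and I write $\pi_k\colon V \to F_k$ for the projection along $\ker\varphi^{n-d_k}$, set $\kappa_k := \id - \pi_k$, and $k(i) := \min\{k : d_k \geq i\}$.

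The basis is then defined by $w_i := \pi_{k(i)}(\varphi^{i-1}(v))$ for a cyclic vector $v\in V$ yet to be chosen. Fixing a Jordan basis $e_1,\ldots,e_n$ of $\varphi$ and writing $v = e_1 + c_2 e_2 + \ldots + c_n e_n$ (any cyclic $v$ has this form after scaling), an upper-triangular argument shows that the $w_i$'s form a basis of $V$ with $F_k = \langle w_1,\ldots, w_{d_k}\rangle$ and that $\langle w_l : l > m\rangle = \langle e_l : l > m\rangle$ for every $m$. A direct computation then yields $\varphi(w_i) - w_{i+1} = \kappa_{k(i+1)}(\varphi^i(v)) - \varphi(\kappa_{k(i)}(\varphi^{i-1}(v)))$, which lies in $\ker\varphi^{n-d_{k(i)}}$ automatically.

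The main obstacle is choosing $v$ so that the stricter correction subspaces are respected: the case $x<d_1$ demands the correction lie in $\langle w_{d_1+2},\ldots, w_n\rangle \subsetneq \ker\varphi^{n-d_1}$, and analogous conditions appear at each boundary $x = d_{k-1}$ of $(b_k)$. Expressed in the coordinates $c_l$, these constraints form a strictly lower-triangular linear system whose coefficients are determined by $F_*$ in the Jordan basis; with $c_1 = 1$ normalized, the system uniquely determines the coordinates needed to kill the spurious components while leaving the remaining $c_l$'s free. Solvability of this triangular system thus produces the desired cyclic vector $v$ and associated basis $w_1,\ldots,w_n$ realizing $(2)$.
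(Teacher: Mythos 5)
Your $(2)\Rightarrow(1)$ direction is fine: the triangularity $\varphi^j(w_i)\in w_{i+j}+\langle w_l : l>i+j\rangle$ does follow from $(b_k)$ and gives the dimension count. The gap is in $(1)\Rightarrow(2)$. The basis $w_i:=\pi_{k(i)}(\varphi^{i-1}(v))$ built from a single cyclic vector $v$ does not satisfy $(b_k)$ in general, and the system of conditions on the coefficients $c_l$ that you describe as a solvable strictly lower-triangular system is in fact overdetermined and inconsistent. Writing your identity $\varphi(w_x)-w_{x+1}=\kappa_{k(x+1)}(\varphi^{x}v)-\varphi(\kappa_{k(x)}(\varphi^{x-1}v))$ in Jordan coordinates, the conditions for $x<d_1$ determine $c_2,\ldots,c_{d_1}$ from the data of $F_1$, while the conditions at each boundary $x=d_{k-1}$ independently determine $c_2,\ldots,c_{b_k}$ from the data of $F_{k-1}$; nothing in hypothesis $(1)$ makes these determinations agree.

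A concrete counterexample to your construction: take $n=5$, block sizes $(2,1,2)$, so $d=(2,3,5)$, let $\varphi$ be regular nilpotent with Jordan basis $\varphi(e_i)=e_{i+1}$, $\varphi(e_5)=0$, and set $F_1=\langle e_1,e_2\rangle$, $F_2=\langle e_1,e_2,e_3+e_4\rangle$. Condition $(1)$ holds, and condition $(2)$ holds with $w=(e_1,\ e_2,\ e_3+e_4,\ e_4+e_5,\ e_5)$. In your construction, however, $w_2=\pi_1(\varphi v)=e_2$, $w_3=\pi_2(\varphi^2v)=e_3+e_4$ and $w_4=\varphi^3v=e_4+c_2e_5$ for every admissible $v$; the requirement at $x=1$ (correction in $\langle w_4,w_5\rangle=\langle e_4,e_5\rangle$) forces $c_2=0$, while the requirement $\varphi(w_3)=w_4$ at $x=d_2=3$ forces $c_2=1$. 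So no cyclic vector of your form exists. The repair is to interleave the projections with the applications of $\varphi$, i.e.\ define $w_{x+1}:=\pi_{k(x+1)}(\varphi(w_x))$ recursively rather than $\pi_{k(x+1)}(\varphi^{x}v)$, and then adjust $w_1$ (and the block-initial vectors) to kill the remaining $w_{d_1+1}$-components; this successive adaptation of the basis, starting from the full-flag normal form of Theorem 5.1 of [BoRe], is essentially what the paper's proof does, and your cyclic-vector shortcut cannot replace it.
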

\begin{proof}
 By \cite[Theorem 5.1]{BoRe}, we find a basis $u_1,\punkte, u_n$ of $V$ that is adapted to $F_*$ and such that
\[\varphi\left(u_x\right)=u_{x+1} \mod\left\langle u_{x+2},\punkte,u_n\right\rangle.\]
It is clear by the theorem of the Jordan normal form that we can modify this basis, such that
\[\varphi(u_{x}) =\left\lbrace 
\begin{array}{ll} 
u_{x+1} \mod \left\langle u_{d_k+1},\punkte, u_n\right\rangle, &  \textrm{if}~ d_{k-1}<x< d_k; \\ 
0 \mod \left\langle u_{d_k+1},\punkte, u_n\right\rangle, &  \textrm{if}~x=d_k.
                                \end{array}\right.\]
One can now verify the existence of the sought basis by adapting the given basis accordingly.
\end{proof}

We make use of Theorem \ref{pargenlem} in order to find a generic normal form in $\N$. Therefore, given $a,b\in\{0,\punkte,n\}$ and a matrix $N\in \N$, we define $N_{(a,b)}$ to be the submatrix formed by the last $a$ rows and the first $b$ columns of $N$. 
\begin{corollary}\label{hnf} The following conditions on a matrix $N\in\N$ are equivalent:
\begin{enumerate}
\item The first $d_k$ columns of $N^{n-d_k}$ are linearly independent for $k\in\{1,\punkte,p-1\}$ ,
\item the minor $\det ((N^{n-d_k})_{(d_k,d_k)})$ is non-zero for each $k\in\{1,\punkte,p-1\}$ ,
\item $N$ is $P$-conjugate to a unique matrix $H$, such that for all $k\in\{1,\punkte,p\}$:
\[H_{i,j}=\left\lbrace \begin{array}{ll}
0, & \textrm{if}~ i\leq j; \\ 
0, &  \textrm{if}~i=d_1+1~ \textrm{and}~ j<d_1;\\
0, &  \textrm{if}~ d_{k-1}+3\leq i\leq d_k ~  \textrm{and}~ d_{k-1}+1\leq j\leq d_k-2,~ \textrm{such~that}~ i>j+1;\\
0, &  \textrm{if}~ d_{k-1}+2\leq i\leq d_{k}~ \textrm{and}~j=d_{k-1}; \\
1, &  \textrm{if}~i=j+1. \\  
                      \end{array}\right.\]
\end{enumerate}
\end{corollary}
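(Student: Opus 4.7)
I plan to prove the chain $(2) \Rightarrow (1) \Rightarrow (3) \Rightarrow (2)$ and then address the uniqueness of $H$. The implication $(2) \Rightarrow (1)$ is immediate: nonvanishing of a $d_k \times d_k$ minor of the $n \times d_k$ matrix of first $d_k$ columns of $N^{n-d_k}$ forces those columns to be linearly independent.

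For $(1) \Rightarrow (3)$, I view $N$ as the matrix of a nilpotent endomorphism $\varphi$ of $V := K^n$ in the standard basis and take $F_*$ to be the standard $p$-step partial flag $F_k = \langle e_1, \ldots, e_{d_k}\rangle$ of dimensions $\dfp$. Condition (1) here coincides precisely with condition (1) of Lemma \ref{pargenlem}, which supplies a basis $w_1, \ldots, w_n$ satisfying $(\textrm{a}_k)$ and $(\textrm{b}_k)$. By $(\textrm{a}_k)$ the change-of-basis matrix $g$ from $(e_i)$ to $(w_i)$ preserves $F_*$ and hence lies in $P$, and reading off $H := g^{-1} N g$ column by column from $(\textrm{b}_k)$ recovers exactly the zero/one pattern of (3).

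For $(3) \Rightarrow (2)$, a short induction on $m$ using $(\textrm{b}_k)$ shows that $\varphi^m(w_j) = w_{j+m} + \sum_{i > j+m} c_i^{(j,m)} w_i$ whenever $j + m \leq n$. Setting $m = n - d_k$ and letting $j$ range over $\{1, \ldots, d_k\}$, the first $d_k$ columns of $H^{n-d_k}$ vanish in the top $n - d_k$ rows and, in the last $d_k$ rows, form a lower-triangular $d_k \times d_k$ block $L$ with $1$'s on the diagonal, so $\det L = 1$. Writing $N = g H g^{-1}$ and propagating through the block upper-triangular structure of $g^{\pm 1}$ --- the first $d_k$ columns of $g^{-1}$ are supported in the top $d_k$ rows and factor through the invertible top-left $d_k \times d_k$ block $G$ of $g^{-1}$, while left multiplication by $g$ on rows $n-d_k+1, \ldots, n$ is controlled by the bottom-right $d_k \times d_k$ principal submatrix $G'$ of $g$ --- yields $(N^{n-d_k})_{(d_k,d_k)} = G' L G$, whose determinant is nonzero.

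The uniqueness of $H$ is the crux. Given $pH = H'p$ for $p \in P$ with both $H, H'$ in the normal form of (3), I would compare both sides column by column. The rigid subdiagonal-$1$ pattern, together with the extra vanishings in (3) at the ``jump'' row $i = d_1 + 1$ (for $j < d_1$) and at the ``gap'' entries $H_{i, d_{k-1}}$ (for $d_{k-1}+2 \leq i \leq d_k$), and the block upper-triangular shape of $p$, force inductively on the column index $j$ first the off-diagonal entries of $p$ and then the free entries of $H$ and $H'$ to coincide. The main obstacle is organising this induction cleanly across the block boundaries where the case distinction in $(\textrm{b}_k)$ switches; the additional zero entries in (3) beyond those directly imposed by $(\textrm{b}_k)$ are exactly what rigidify the comparison.
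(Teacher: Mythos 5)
Your reduction of $(1)\Leftrightarrow(3)$ to Lemma \ref{pargenlem} via the standard flag is exactly the route the paper intends (the corollary is stated there without further proof as a consequence of that lemma), and your check that the basis conditions $(\textrm{a}_k)$, $(\textrm{b}_k)$ translate into the zero/one pattern of $(3)$ is sound. The genuine gap is in $(3)\Rightarrow(2)$: you need $\det G'\neq 0$ for $G'$ the bottom-right $d_k\times d_k$ principal submatrix of $g\in P$, and this is false for a general upper-block parabolic, because $n-d_k$ is not a block boundary of $P$, so the rows $n-d_k+1,\punkte,n$ cut across the blocks. Concretely, take $n=3$ and block sizes $(1,2)$, so $d_1=1$; let $H$ be the regular nilpotent Jordan block (which is the unique normal form of $(3)$ here) and let $g\in P$ be the permutation matrix exchanging $e_2$ and $e_3$. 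Then $N=gHg^{-1}$ sends $e_1\mapsto e_3\mapsto e_2\mapsto 0$, so $(N^{2})_{3,1}=0$ and condition $(2)$ fails for $k=1$, although $N$ satisfies $(1)$ and $(3)$. This is not a repairable defect of your argument: conditions $(1)$ and $(3)$ are visibly $P$-stable, while your own identity $(N^{n-d_k})_{(d_k,d_k)}=G'LG$ shows that the minor in $(2)$ carries the factor $\det G'$ and hence is not $P$-stable, so no proof of $(3)\Rightarrow(2)$ can exist for general $P$. The equivalence does hold when $P=B$, where $G'$ is upper triangular with nonzero diagonal; for general $P$ you should flag the failure rather than assert the invertibility.

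Separately, the uniqueness of $H$ in $(3)$ is only sketched: you describe the column-by-column induction you would run and name the obstacle (organising it across the block boundaries where the case distinction in $(\textrm{b}_k)$ switches) without resolving it. Since uniqueness is what makes $(3)$ a normal form and is what the subsequent separation statements for $\Ha_B$ and $\Ha_U$ rely on, this part has to be carried out; as written it is a plan, not a proof.
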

The normal form is sketched in Figure \ref{figgnf} where the block sizes are those of the parabolic subgroup $P$.
\begin{figure}[t]
\centering
$\left(\begin{array}{l;{1.5pt/1.7pt}l;{1.5pt/1.7pt}l;{1.5pt/1.7pt}l;{1.5pt/1.7pt}l}
\begin{array}{cccc}
0&\cdots&&0\\ 
1&&&\\ 
&\ddots&&\vdots\\ 
0&&1&0\end{array}  &~~~~~~~~~ 0 &~~~~~~~ 0~~~~~~~ &~~~~~~~~~ 0 &~~~~~~~~~ 0\\ \hdashline[1pt/2pt] 
\begin{array}{cccc}
0&\cdots&0&1\\ 
*&\cdots&*&0\\ 
\vdots&&\vdots&\vdots\\ 
*&\cdots&*&0\end{array} & \begin{array}{cccc}
0&\cdots&&0\\ 
1&&&\\ 
&\ddots&&\vdots\\ 
0&&1&0\end{array}  & ~~~~~~~0 &~~~~~~~~~ 0 &~~~~~~~~~ 0\\  \hdashline[1pt/2pt] 
\begin{array}{ccc}
*&~~\cdots~~&*\\ 
\vdots&&\vdots\\ 
*&~~\cdots~~&*\end{array}                           & \begin{array}{cccc}*&\cdots&*&1\\ 
*&\cdots&*&0\\ 
\vdots&&\vdots&\vdots \\  
 *&\cdots&*&0 \end{array}   & ~~~~~\ddots  &~~~~~~~~~ 0 &~~~~~~~~~ 0\\  \hdashline[1pt/2pt] 
\begin{array}{ccc}
*&~~\cdots~~&*\\ 
\vdots&&\vdots\\ 
*&~~\cdots~~&*\end{array}& \begin{array}{ccc}
*&~~\cdots~~&*\\ 
\vdots&&\vdots\\ 
*&~~\cdots~~&*\end{array}& ~~~~~\ddots  &\begin{array}{cccc}
0&\cdots&&0\\ 
1&&&\\ 
&\ddots&&\vdots\\ 
0&&1&0\end{array}  &~~~~~~~~~ 0\\ \hdashline[1pt/2pt] 
\begin{array}{ccc}
*&~~\cdots~~&*\\ 
\vdots&&\vdots\\ 
*&~~\cdots~~&*\end{array}& \begin{array}{ccc}
*&~~\cdots~~&*\\ 
\vdots&&\vdots\\ 
*&~~\cdots~~&*\end{array} & \begin{array}{ccc}
*&\cdots&*\\ 
\vdots&&\vdots\\ 
*&\cdots&*\end{array}    &\begin{array}{cccc}*&\cdots&*&1\\ 
*&\cdots&*&0\\ 
\vdots&&\vdots&\vdots \\  
*&\cdots&*&0 \end{array}  &\begin{array}{cccc}
0&\cdots&&0\\ 
1&&&\\ 
&\ddots&&\vdots\\ 
0&&1&0\end{array} \\  

          \end{array}\right)$
\caption{The generic parabolic normal form}\label{figgnf}
\end{figure}
As a direct consequence, the affine space
\[\Ha_B:=\{ H\in \N\mid  H_{i,j}=0~{\rm for}~i\leq j;~ H_{i+1,i}=1~{\rm for~all~}i\}\]
separates the $B$-orbits in $\N$ generically (that is, in the open subset $\N_B\coloneqq B.\Ha_B\subseteq \N$).
Moreover, the space
\[\Ha_U:=\{ H\in \N\mid  H_{i,j}=0~{\rm for}~i\leq j;~ H_{i+1,i}\neq 0~{\rm for~all~}i\}\]
separates the $U$-orbits in $\N$ generically  in the open subset $\N_U\coloneqq U.\Ha_U\subseteq \N$. 
\section{Generation of (semi-) invariant rings}\label{generation}
From now on, we consider the action of the Borel subgroup $B$ and the unipotent subgroup $U$ on the nilpotent cone $\N$. We define (semi-) invariants which generate the corresponding ring of (semi-) invariants (as we will see in Theorem \ref{genersemi}). Let us start by defining those Borel-semi-invariants introduced in \cite[Proposition 5.3]{BoRe}.\\[1ex]
Given $i\in\{1,\punkte,n\}$, we denote by $\omega_i\colon B\rightarrow\textbf{G}_m$ the character which is defined by $\omega_i\left(g\right)=g_{i,i}$; the $\omega_i$ form a basis for the group of characters of $B$.\\[1ex]
Let us fix integers $s,t\in\mathbf{N}$. For $i\in\{1,\punkte,s\}$ and $j\in\{1,\punkte,t\}$, we fix integers $a_i,a'_j\in\{1,\punkte,n\}$ with \mbox{$a_1+\punkte+a_s=a'_1+\punkte+a'_t=:r$} and polynomials $\Pa_{i,j}\left(x\right)\in K[x]$.\\[1ex]
Let $N\in \N$, then for all such $i$ and $j$ we consider the submatrices 
$\Pa_{i,j}\left(N\right)_{(a_i,a'_j)}\in K^{a_i\times a'_j}$  and form the $r\times r$-block matrix \[N^{\Pa}\coloneqq\left(\Pa_{i,j}\left(N\right)_{(a_i,a'_j)}\right)_{i,j},~{\rm where}~\Pa\coloneqq \left(\left(a_i\right)_i,\left(a'_j\right)_j,\left(\Pa_{i,j}\right)_{i,j}\right).\] 
\begin{proposition}\label{semiprop}
 For every datum $\Pa$ as above, the function 
\[f^{\Pa}\colon \N~\rightarrow~ K; ~~N~\mapsto~\det\left(N^{\Pa}\right)\]
 defines a $B$-semi-invariant regular function on $\N$ of weight \[\sum_{i=1}^s\left(\omega_{n-a_i+1}+\punkte+\omega_n\right)-\sum_{j=1}^t\left(\omega_1+\punkte+\omega_{a'_j}\right).\]
\end{proposition}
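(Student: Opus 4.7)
The plan is a direct computation: transform each submatrix building block under conjugation by $g \in B$, assemble the transformation of the whole block matrix $N^{\Pa}$, and take determinants.

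First, I observe that $f^{\Pa}$ is visibly a polynomial in the entries of $N$, hence regular. The real content is the semi-invariance. Since each $\Pa_{i,j}$ is a polynomial, conjugation commutes with it: $\Pa_{i,j}(gNg^{-1}) = g\,\Pa_{i,j}(N)\,g^{-1}$. The key step will therefore be a single linear-algebraic lemma: for any $n \times n$ matrix $M$ and any $g \in B$,
\[
(gMg^{-1})_{(a,b)} = T_a(g) \cdot M_{(a,b)} \cdot S_b(g),
\]
where $T_a(g)$ denotes the lower-right $a \times a$ block of $g$ and $S_b(g)$ denotes the upper-left $b \times b$ block of $g^{-1}$. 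To prove this, I would write the "last $a$ rows" and "first $b$ columns" operations as multiplication by fixed coordinate projections $P_a$ and $Q_b$, then use upper-triangularity of $g$ to show $P_a g = T_a(g) \cdot P_a$ on $K^n$ (i.e.\ the last $a$ rows of $g$ vanish in the first $n-a$ columns), and dually $g^{-1} Q_b = Q_b \cdot S_b(g)$ (the first $b$ columns of $g^{-1}$ vanish below row $b$). This is where all the actual work lives; the computation is short but is the single place where upper-triangularity of $B$ is used.

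Applying this lemma block by block, with $M$ replaced by $\Pa_{i,j}(N)$, gives
\[
(gNg^{-1})^{\Pa} = T \cdot N^{\Pa} \cdot S,
\]
where $T$ is block-diagonal with blocks $T_{a_1}(g),\ldots,T_{a_s}(g)$ and $S$ is block-diagonal with blocks $S_{a'_1}(g),\ldots,S_{a'_t}(g)$. Taking determinants and using multiplicativity for block-diagonal matrices yields
\[
f^{\Pa}(gNg^{-1}) = \det(T)\,\det(N^{\Pa})\,\det(S) = \Bigl(\prod_i \det T_{a_i}(g)\Bigr)\Bigl(\prod_j \det S_{a'_j}(g)\Bigr)\cdot f^{\Pa}(N).
\]

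Finally, since $g$ is upper triangular, $T_{a_i}(g)$ is upper triangular with diagonal entries $g_{n-a_i+1,n-a_i+1},\ldots,g_{n,n}$, so $\det T_{a_i}(g) = \omega_{n-a_i+1}(g)\cdots\omega_n(g)$. Similarly, $g^{-1}$ is upper triangular with diagonal entries $g_{k,k}^{-1}$, so $\det S_{a'_j}(g) = \omega_1(g)^{-1}\cdots\omega_{a'_j}(g)^{-1}$. Multiplying these together (writing the character group additively) gives exactly the weight
\[
\sum_{i=1}^s\bigl(\omega_{n-a_i+1}+\cdots+\omega_n\bigr) - \sum_{j=1}^t\bigl(\omega_1+\cdots+\omega_{a'_j}\bigr),
\]
completing the proof. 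The only conceptual obstacle is the submatrix-transformation lemma; once it is in hand, everything else is bookkeeping.
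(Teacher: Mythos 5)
Your argument is correct: the submatrix-transformation identity $(gMg^{-1})_{(a,b)} = T_a(g)\,M_{(a,b)}\,S_b(g)$ is exactly the point where upper-triangularity enters, and the block-diagonal determinant bookkeeping then yields the stated weight. The paper itself gives no proof here, deferring to \cite[Proposition 5.3]{BoRe}, where the argument is this same direct computation, so your proposal matches the intended proof.
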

Note that the function $f^{\Pa}$ is also a $U$-invariant regular function on $\N$.
\begin{theorem}\label{genersemi}
The semi-invariant ring $K[\N]^B_*$ is generated by the semi-invariants of Proposition \ref{semiprop}.
\end{theorem}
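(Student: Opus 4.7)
The plan is to combine the associated-bundle identification of Lemma~\ref{bijection} with the Schofield--van den Bergh description in Theorem~\ref{Schosemi}. The embedding $B\hookrightarrow\GL_{\dfs}$ sending a Borel element to its action on each step of the standard flag induces an isomorphism of character lattices (sending $\det_i\mapsto\omega_1\cdots\omega_i$), and combined with the isomorphism $R_{\dfs}^{\inj}(\Q_n,I_n)\cong\GL_{\dfs}\times^B\N$ via the evaluation $f\mapsto(N\mapsto f(M^N))$, each weight space $K[\N]^B_{\chi}$ is identified with $K[R_{\dfs}^{\inj}(\Q_n,I_n)]^{\GL_{\dfs}}_{\tilde\chi}$ for the corresponding $\tilde\chi$. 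Describing $B$-semi-invariants on $\N$ thus reduces to describing $\GL_{\dfs}$-semi-invariants on the open subvariety $R_{\dfs}^{\inj}(\Q_n,I_n)$.

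The next step lifts such semi-invariants to the free path-algebra setting. The variety $R_{\dfs}(\Q_n,I_n)$ is the product of affine spaces (for the arrows $\alpha_i$) with the normal variety $\N$ (for the loop $\alpha$ together with $\alpha^n=0$), hence is normal. The complement of $R_{\dfs}^{\inj}(\Q_n,I_n)$ in it lies in the union of the non-injectivity loci of the $\alpha_i$, and since $d_{i+1}-d_i=1$, each of these has codimension $2$; thus the complement has codimension at least $2$, and restriction gives an isomorphism of semi-invariant rings. Furthermore, $R_{\dfs}(\Q_n,I_n)\subset R_{\dfs}(\Q_n)$ is a $\GL_{\dfs}$-stable closed subvariety, and reductivity of $\GL_{\dfs}$ implies surjectivity of the restriction map $K[R_{\dfs}(\Q_n)]^{\GL_{\dfs}}_{\tilde\chi}\twoheadrightarrow K[R_{\dfs}(\Q_n,I_n)]^{\GL_{\dfs}}_{\tilde\chi}$ on each weight component. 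Theorem~\ref{Schosemi} then shows that every element of $K[\N]^B_{*}$ is, via these identifications, represented by a linear combination of determinantal semi-invariants $f_{\phi}$.

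The final step matches these $f_{\phi}(M^{-})$ with the $f^{\Pa}$ of Proposition~\ref{semiprop}. Since $\Hom_{\add\Q_n}(O(i),O(j))$ is spanned by the paths $i\to j$, evaluating a morphism $O(i)\to O(j)$ with $j<n$ on $M^N$ gives a scalar multiple of the standard inclusion $K^i\hookrightarrow K^j$, while evaluating a morphism $O(i)\to O(n)$ gives the first $i$ columns of some polynomial $\Pa(N)$. For the specific $\phi\colon\bigoplus_{k}O(n-a_k)\oplus\bigoplus_{l}O(a'_l)\to O(n)^s$ whose component from the $k$th $O(n-a_k)$ summand to the $k'$th copy of $O(n)$ is $\delta_{k,k'}\alpha_{n-1}\cdots\alpha_{n-a_k}$ and whose component from the $l$th $O(a'_l)$ summand to the $k$th copy of $O(n)$ is $\Pa_{k,l}(\alpha)\cdot\alpha_{n-1}\cdots\alpha_{a'_l}$, suitable row-column permutations bring $M^N(\phi)$ into block lower-triangular form with an identity block in the upper-left and the matrix $N^{\Pa}$ in the lower-right; hence $f_{\phi}(M^N)=\pm f^{\Pa}(N)$. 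A Laplace expansion along the ``constant'' target-rows coming from summands $O(j)$ with $j<n$ then reduces an arbitrary $f_{\phi}$ to a linear combination of such expressions, so the $f^{\Pa}$ span $K[\N]^B_{*}$.

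The main obstacle is this final identification: reconciling the ``first-$i$-columns-of-$\Pa(N)$'' blocks that arise naturally from the inclusion morphisms of $\add\Q_n$ with the ``last-$a_i$-rows, first-$a'_j$-columns'' submatrix structure in the definition of $f^{\Pa}$ requires a careful choice of block permutations and systematic bookkeeping of the Laplace expansion, together with verifying that all character components are hit.
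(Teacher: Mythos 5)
Your architecture is exactly the paper's: lift $B$-semi-invariants on $\N$ to $\GL_{\dfs}$-semi-invariants on $R^{\inj}_{\dfs}(\Q_n,I_n)$ via Lemma \ref{bijection} (with the character identification you describe), pass to $R_{\dfs}(\Q_n)$ using the codimension-$\geq 2$ estimate on the non-injective locus together with surjectivity of restriction to the closed $\GL_{\dfs}$-stable subvariety $R_{\dfs}(\Q_n,I_n)$ (reductivity), invoke Theorem \ref{Schosemi}, and finally match the restricted determinantal semi-invariants $f_{\phi}$ with the $f^{\Pa}$. Your first two paragraphs reproduce the paper's first half, and your evaluation of morphisms $O(j)\to O(n)$ as ``first $j$ columns of $\Pa(N)$'' is the same computation the paper carries out; your explicit $\phi$ realizing a prescribed $\Pa$ is correct but not needed, since Proposition \ref{semiprop} already certifies the $f^{\Pa}$ as semi-invariants.

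The one genuine gap is your treatment of target summands $O(j)$ with $j<n$. A Laplace expansion along those constant rows does not land in the span of the $f^{\Pa}$: the complementary minors have column blocks given by \emph{arbitrary} subsets $S\subseteq\{1,\dots,j\}$ of the columns of the matrices $\Pa_{i,j}(N)$, whereas only initial segments of columns arise from data $\Pa$ (equivalently, $\Hom_{\add\Q_n}(O(m),O(j))$ is at most one-dimensional for $j<n$, so only the standard inclusions are realized); moreover only the full alternating sum is a semi-invariant, so there is no reason individual terms of the expansion should be polynomials in the $f^{\Pa}$. The paper instead eliminates these summands \emph{before} evaluating: one may assume $y_1=\dots=y_{n-1}=0$, by induction on the index $i$. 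Concretely, if a target copy of $O(i)$, $i<n$, receives no component $\lambda\cdot\mathrm{id}$ from a source copy of $O(i)$, then the $i$-th row of that block of $M^N(\phi)$ vanishes identically (the evaluated maps $K^{j}\hookrightarrow K^{i}$ with $j<i$ have zero bottom row), so $f_{\phi}=0$; otherwise one splits off an isomorphism $O(i)\to O(i)$ by Gaussian elimination in $\add\Q_n$ (the required entries $O(j')\to O(i)$ and $O(i)\to O(i')$ exist exactly where nonzero components occur), and $f_{\phi}$ is a scalar multiple of $f_{\phi'}$ for the complementary summand. After this reduction your own identification applies verbatim and yields $f_{\phi}(M^N)=\pm f^{\Pa}(N)$ with $a=(n,\dots,n)$. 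Replacing your Laplace-expansion step by this splitting-off argument closes the gap and recovers the paper's proof.
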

\begin{proof}
First, we show $K[R^{\inj}_{\dfs}(\Q_n,I_x)]^{\GL_{\dfs}}_*\subseteq K[R_{\dfs}(\Q_n)]^{\GL_{\dfs}}_*$:\\[1ex]
The surjection $K[R_{\dfs}(\Q_n)]\rightarrow K[R_{\dfs}(\Q_n,I_x)]$ induces a surjection on the corresponding semi-invariant rings, since $\GL_{\dfs}$ is reductive. Furthermore, the codimension of $R_{\dfs}(\Q_n,I_x)\backslash R^{\inj}_{\dfs}(\Q_n,I_x)$ in $R_{\dfs}(\Q_n,I_x)$ is greater or equal than $2$, which yields the claim.\\[1ex] 
Following Lemma \ref{bijection}, we see that each $B$-semi-invariant $f$ on $\N$ is uniquely lifted to a $\GL_{\dfs}$-semi-invariant in $K[R^{\inj}_{\dfs}(\Q_n,I_x)]$. Theorem \ref{Schosemi} yields that $K[R_{\dfs}(\Q_n)]^{\GL_{\dfs}}_*$ is spanned by the determinantal semi-invariants $f_{\phi}$ defined in Subsection \ref{repsofalgebras}. Therefore, it suffices to prove that each determinantal semi-invariant, restricted to $R^{\inj}_{\dfs}(\Q_n,I_x)$, corresponds to one of the $B$-semi-invariants of Proposition \ref{semiprop}.\\[1ex]
Let us fix an arbitrary morphism in $\add \Q$, say 
\[\phi\colon \bigoplus_{j=1}^n O(j)^{x_j}\rightarrow \bigoplus_{i=1}^n O(i)^{y_i},\] such that
$h\coloneqq \sum_{j\in \Q_0}x_j \cdot j=\sum_{i\in \Q_0}y_i \cdot i.$  Then, by Section \ref{repsofalgebras}, we obtain a determinantal semi-invariant 
$f_{\phi}$.\\[1ex] 
The homomorphism spaces $P(j,i)$ between two objects $O(j)$ and $O(i)$ in $\add\Q$ are generated as $K$-vector spaces by
\[P(j,i)=\left\lbrace 
\begin{array}{ll}
0, & \textrm{if}~j>i; \\[1ex] 
\left\langle \rho_{j,i}\coloneqq\alpha_{i-1}\cdots \alpha_j\right\rangle, & \textrm{if}~j\leq i<n; \\[1ex] 
\left\langle \rho_{j,n}^{(k)}\coloneqq\alpha^k\alpha_{n-1}\cdots \alpha_j \mid k\in \mathbf{N}\cup\{0\}\right\rangle, & \textrm{if}~i=n.
                     \end{array}\right. \] 
The morphism $\phi$ is given by a $\sum_{i=1}^n y_i \times \sum_{j=1}^n x_j$-matrix $H$ with entries being morphisms between objects in $\add\Q$. 
We can view the matrix $H$ as an $n\times n$ block matrix $H=(H_{i,j})_{1\leq i,j\leq n}$ with $H_{i,j}\in K^{y_i\times x_j}$ for $i,j\in\{1,\punkte,  n\}$. Then
\[\left(H_{i,j}\right)_{k,l}= \left\lbrace
\begin{array}{ll}
0, & \textrm{if} ~ i<j; \\ 
\lambda^{k,l}_{i,j}\cdot \rho_{j,i}, & \textrm{for~some~} \lambda^{k,l}_{i,j}\in K~\textrm{if}~ j\leq i<n;\\ 
 \sum\limits_{h=0}^{\infty}\left(\lambda^{k,l}_{n,j}\right)_h\cdot \rho^{(h)}_{j,n}, &\textrm{for~some~}\left(\lambda^{k,l}_{n,j}\right)_h\in K~ \textrm{if}~ j\leq i=n.
                                        \end{array}\right.  \] 
Given an arbitrary matrix $N\in \N$, we reconsider the representation $M^N$ defined in Theorem \ref{bijection}.
 Since $\GL_{\dfs}$ acts transitively on $R_{\dfs}^{\inj}(\Q')$ with $\Q'$ being the linearly oriented quiver of Dynkin type $A_n$, we can examine the restricted semi-invariant on these representations $M^N$.\\[1ex]
The $B$-semi-invariant of $\N$ associated to $f_{\phi}$ via the translation of Lemma \ref{bijection} is given by
\[f^{\phi}\colon \N \rightarrow K;~ N \mapsto \det M^N(\phi).\]
The matrix 
\[M^N(\phi)=\left(M^N_{i,j}\right)_{1\leq i,j\leq n}\in K^{h\times h}\]
 is given as a block matrix
where each block 
\[M^N_{i,j}=  \left(\left(M^N_{i,j}\right)_{k,l}\right)_{\begin{subarray}{l}
1\leq k\leq y_i \\ 
1\leq l\leq x_j \end{subarray}}\in K^{iy_i\times jx_j}\]
is again a block matrix. The blocks of $M^N_{i,j}$ are given by

\[K^{i\times j} \ni \left(M^N_{i,j}\right)_{k,l}=\left\lbrace \begin{array}{ll}
0, & \textrm{if} ~ i<j; \\ 
\lambda^{k,l}_{i,j}\cdot E^{(i)}_{(i,j)}, & \textrm{if}~ j\leq i<n;\\ 
 \sum\limits_{h=0}^{\infty}\left(\lambda^{k,l}_{n,j}\right)_h\cdot \left(N^h\right)_{(n,j)}, & \textrm{if}~ j\leq i=n; 
                                        \end{array}\right.\]
such that $E^{(i)}\in K^{i\times i}$ is the identity matrix.
Note that if  $i,j\in\{1,\punkte,n\}$ and $i<n$, then $M_{i,j}^N=M_{i,j}^{N'}=:M_{i,j}$ for every pair of matrices $N,N'\in \N$.\\[1ex]
We can without loss of generality assume $y_1=\punkte=y_{n-1}=0$ which can, for example, be seen by induction on the index $i$ of $y_i$. This assumption is not necessary for the proof, but will shorten the remaining argumentation. Let us
define
 \[a\coloneqq(\underbrace{n,\punkte,n}_{=:a_{1},\punkte,a_{y_n}})~~~
\textrm{and}~~~a'\coloneqq(\underbrace{1,\punkte,1}_{=:a'_{1,1},\punkte,a'_{1,x_1}},\underbrace{2,\punkte,2}_{=:a'_{2,1},\punkte,a'_{2,x_2}},\punkte,\underbrace{n,\punkte,n}_{=:a'_{n,1},\punkte,a'_{n,x_n}}).\]
Furthermore, define for $j\in\{1,\punkte, n\}$ and for each pair of integers $k\in \{1,\punkte, y_n\}$ and $l\in\{1,\punkte, x_j\}$ the polynomial
 \[\Pa^{(k,l)}_{j}\coloneqq\sum_{h=0}^{\infty}\left(\lambda^{k,l}_{n,j}\right)_h\cdot X^{h}. \]
Let us denote $\Pa\coloneqq\left(a,a',\left(P_{j}^{(k,l)}\right)_{j,k,l}\right)$ and let $N\in \N$; it suffices to show $f^{\Pa}(N)=f^{\phi}(N)$:

\begin{align}
     f^{\phi}(N)~  ~=~\det M^N(\phi)&  ~=~ \det \left(M^N_{n,j}\right)_{1\leq j\leq n} ~=~ \det \left( \left(\left(M^N_{n,j}\right)_{k,l}\right)_{\begin{subarray}{l}
1\leq k\leq y_n \\ 
1\leq l\leq x_j \end{subarray}}\right)_{1\leq j\leq n}\nonumber\\
           & ~=~ \det \left(\left(\sum\limits_{h=0}^{\infty}\left(\lambda^{k,l}_{n,j}\right)_h\cdot \left(N^h\right)_{(n,j)}\right)_{\begin{subarray}{l}
1\leq k\leq y_n \\ 
1\leq l\leq x_j \end{subarray}}\right)_{1\leq j\leq n} \nonumber\\
 &~=~ \det \left(\left(\Pa^{\left(k,l\right)}_{j}(N)_{(n,j)}\right)_{\begin{subarray}{l}
1\leq k\leq y_n \\ 
1\leq l\leq x_j \end{subarray}}\right)_{1\leq j\leq n}~=~ \det N^{\Pa}~=~ f^{\Pa}(N) .\qedhere \nonumber
\end{align}
\end{proof}
\begin{corollary}\label{Ugeninv}
 The $U$-invariant ring $K[\N]^U$ is spanned by the induced $U$-invariants.
\end{corollary}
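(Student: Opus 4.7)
The plan is to reduce the corollary to Theorem \ref{genersemi} by identifying $K[\N]^U$ with the full $B$-semi-invariant ring $K[\N]^B_*$. Let $T$ denote the standard maximal torus, so that $B = T \ltimes U$. Since $T$ normalizes $U$, the $T$-action on $K[\N]$ restricts to an action on $K[\N]^U$, and I would decompose this rational $T$-representation into weight spaces
\[
K[\N]^U \;=\; \bigoplus_{\chi \in X(T)} (K[\N]^U)_\chi.
\]
Because $U$ lies in the kernel of every character of $B$, characters of $B$ are the same as characters of $T$, and a function $f \in K[\N]$ lies in $(K[\N]^U)_\chi$ if and only if $f$ is a $B$-semi-invariant of weight $\chi$. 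Hence $K[\N]^U$ coincides with the direct sum of all $B$-semi-invariant weight spaces, i.e.\ with $K[\N]^B_*$.

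With this identification in hand, I would invoke Theorem \ref{genersemi}, which asserts that $K[\N]^B_*$ is generated as a $K$-algebra by the determinantal semi-invariants $f^{\Pa}$ of Proposition \ref{semiprop}. The remark following that proposition records that each $f^{\Pa}$ is in fact already $U$-invariant, so these are precisely the ``induced $U$-invariants'' being referred to. Since products of $B$-semi-invariants are again $B$-semi-invariants (their weights adding), the $K$-span of all products of the $f^{\Pa}$ exhausts every weight space $K[\N]^{B,\chi}$, and therefore exhausts all of $K[\N]^U$ by the first paragraph.

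The only genuinely non-trivial ingredient is Theorem \ref{genersemi} itself, which is already established. The remaining step—namely the weight-space identification $K[\N]^U = K[\N]^B_*$—is entirely standard, so I do not anticipate any substantive obstacle; the main thing to be careful about is stating the decomposition cleanly, keeping the two meanings of ``weight'' (over $T$ versus over $B$) consistent, and noting explicitly that the $f^{\Pa}$ of Proposition \ref{semiprop} are already $U$-invariant so that the passage from Theorem \ref{genersemi} to the $U$-invariant ring involves no further construction.
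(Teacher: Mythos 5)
Your argument is correct and matches the paper's intent: the corollary is stated without proof as an immediate consequence of Theorem \ref{genersemi}, precisely via the standard identification $K[\N]^U = K[\N]^B_*$ coming from the $T$-weight space decomposition (characters of $B=T\ltimes U$ vanish on $U$), together with the remark after Proposition \ref{semiprop} that each $f^{\Pa}$ is already $U$-invariant. Nothing further is needed.
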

\section[About the algebraic U-quotient of the nilpotent cone]{About the algebraic $U$-quotient of the nilpotent cone}\label{Uquot}
We have seen that the $U$-invariant ring $K[\N]^U$ is spanned by the functions defined in Proposition \ref{semiprop}. At least for the cases $n=2,3$ the quotient criterion which we prove (in a more general setup) in the next subsection helps to provide the explicit structure of these rings.
\subsection{A quotient criterion}
Let $G$ be a reductive algebraic group and $U$ be a unipotent subgroup. Then $U$ acts on $G$ by right multiplication and Lemma \ref{Uinvfin} states that the $U$-invariant ring $K[G]^U$ is finitely generated as a $K$-algebra. Thus, an algebraic $U$-quotient of $G$, namely $G\quot U\coloneqq\Spec K[G]^U$, exists together with a dominant morphism $\pi_{G\quot U}\colon G\rightarrow G\quot U$ which is in general not surjective. Note that there is an element $\overline{e}\in G\quot U$, such that  $\pi_{G\quot U}(g)=g\overline{e}$ for all $g\in G$.\\[1ex]
The group $G$ acts on $G\quot U$ by left multiplication. Let $X$ be an affine $G$-variety and consider the diagonal operation of $G$ on the affine variety $G\quot U\times X$; we consider the natural $G$-equivariant morphism $\iota: X\rightarrow G\quot U\times X$.\\
 Let $\pi'\colon G\quot U\times X\rightarrow (G\quot U\times X)\quot G\coloneqq\Spec K[G\quot U\times X]^G$ be the associated algebraic $G$-quotient, then we obtain a morphism 
\[\rho\coloneqq \pi'\circ \iota: X\rightarrow  (G\quot U\times X)\quot G.\]
The morphism $\rho$ induces an isomorphism 
$\rho^*\colon (K[G]^U\otimes K[X])^G \rightarrow K[X]^U.$\\
Thus, $X\quot U\cong (G\quot U\times X)\quot G$ and
\[K[X]^U \cong (K[G\quot U\times X])^G\cong (K[G\quot U]\otimes K[X])^G\cong (K[G]^U\otimes K[X])^G.\]
Let $Y$ be an affine $G$-variety and let $\mu': G\quot U\times X\rightarrow Y$ be a $G$-invariant morphism, together with a dominant $U$-invariant morphism of affine varieties 
\[\mu\colon X \rightarrow Y;~x  \mapsto (f_{1}(x),\punkte, f_s(x)),\] such that $\mu'\circ \iota = \mu$.\\[1ex]
In this setting, we obtain the following criterion for $\mu$ to be an algebraic $U$-quotient.
\begin{lemma}\label{critU}Assume that
\begin{enumerate}
 \item[(1.)] $Y$ is normal,
 \item[(2.)] $\mu$ separates the $U$-orbits generically, that is, there is an open subset $Y_U\subseteq Y$, such that $\mu(x)\neq \mu(x')$ for all $x,x'\in X_U\coloneqq\mu^{-1}(Y_U)$, and
\item[(3.)] $\codim_Y(\overline{Y\backslash Y_U})\geq 2$ or $\mu$ is surjective.
\end{enumerate}
Then $\mu$ is an algebraic $U$-quotient of $X$, that is, $Y\cong X\quot U$.
\end{lemma}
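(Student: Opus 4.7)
The plan is to reduce the lemma to an application of Theorem \ref{criterion} to the $G$-invariant morphism $\mu'$. The discussion preceding the statement produces natural isomorphisms
\[K[X]^U \;\cong\; (K[G]^U\otimes K[X])^G \;\cong\; K[G\quot U\times X]^G,\]
so $X\quot U$ agrees with the algebraic $G$-quotient $(G\quot U\times X)\quot G$. Since $\mu=\mu'\circ\iota$, the comorphism $\mu^*\colon K[Y]\to K[X]^U$ factors as $\iota^*\circ(\mu')^*$, and $\iota^*$ becomes an isomorphism under the identification above. Hence $\mu$ is an algebraic $U$-quotient of $X$ if and only if $\mu'$ is an algebraic $G$-quotient of $G\quot U\times X$, and the task is to verify the three hypotheses of Theorem \ref{criterion} for $\mu'$.

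Conditions (1) and (2) of Theorem \ref{criterion} follow directly from the hypotheses of the lemma: $Y$ is normal by assumption, and because $\mu(X)=\mu'(\iota(X))\subseteq\mu'(G\quot U\times X)$ we have $Y_U\subseteq\mu(X)\subseteq\mu'(G\quot U\times X)$, so
\[\codim_Y\!\left(\overline{Y\setminus\mu'(G\quot U\times X)}\right) \;\geq\; \codim_Y\!\left(\overline{Y\setminus Y_U}\right) \;\geq\; 2,\]
and surjectivity of $\mu'$ follows from surjectivity of $\mu$. For condition (3) I would take $Y_0:=Y_U$ and show that for each $y\in Y_U$ the fibre $(\mu')^{-1}(y)$ contains a unique closed $G$-orbit. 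The key geometric observation is that the image $G\cdot\bar{e}=\pi_{G\quot U}(G)\subseteq G\quot U$ is open and dense and a single $G$-orbit with stabilizer $U$; hence $G\cdot\bar{e}\times X$ is an open, dense, $G$-stable subset of $G\quot U\times X$. Writing $(g\bar{e},x)=g\cdot(\bar{e},g^{-1}x)$ shows that every $G$-orbit in $G\cdot\bar{e}\times X$ meets $\iota(X)=\{\bar{e}\}\times X$, and two points $(\bar{e},x),(\bar{e},x')$ lie in the same $G$-orbit precisely when $x,x'$ lie in the same $U$-orbit. Interpreting assumption (2) as saying that $\mu^{-1}(y)$ is a single $U$-orbit for each $y\in Y_U$, the intersection $(\mu')^{-1}(y)\cap(G\cdot\bar{e}\times X)$ is then a single $G$-orbit, and its closure exhausts the entire fibre $(\mu')^{-1}(y)$. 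In particular the fibre contains a unique closed $G$-orbit.

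The main obstacle will be the precise identification of the stabilizer of $\bar{e}$ in $G$ with $U$ and the corresponding bijection between $G$-orbits on the open dense subset $G\cdot\bar{e}\times X$ and $U$-orbits on $X$; this relies on the fact that $\pi_{G\quot U}\colon G\to G\quot U$ separates right $U$-cosets over its image, together with a careful use of the universal property of the algebraic quotient. Once this is in hand, the three conditions of Theorem \ref{criterion} fall out routinely from the hypotheses of the lemma, and the reductive quotient criterion closes the argument.
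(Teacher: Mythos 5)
Your proposal is correct and follows essentially the same route as the paper: both reduce the statement to Theorem \ref{criterion} applied to the $G$-invariant morphism $\mu'$ on $G\quot U\times X$, verify normality, the codimension condition via $Y_U\subseteq\mu(X)\subseteq\mu'(G\quot U\times X)$, and the generic uniqueness of closed orbits by identifying $G$-orbits on $G.(\{\overline{e}\}\times X_U)$ with $U$-orbits on $X_U$, and then transfer the conclusion back through the isomorphism $\rho^*\colon (K[G]^U\otimes K[X])^G\rightarrow K[X]^U$. The paper phrases the closed-orbit step through the factorization $\tilde{\mu'}\circ\pi'=\mu'$ rather than through orbit closures in the fibre, but this is only a cosmetic difference.
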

\begin{proof}
Let $g_{1},\punkte,g_s\in K[G\quot U\times X]^G$, such that $\rho^*(g_{i})=f_{i}$ for all $i$.\\[1ex]
Clearly,
\[2\leq \codim_Y(\overline{Y\backslash \mu(X)})\leq\codim_{Y}(\overline{Y\backslash \mu'(G\quot U\times X)}).\]
The morphism $\mu'$ separates the $G$-orbits in  $ G\quot U\times X$ generically (that is, in $Y_U$):\\
If $x,x'\in G.(\{\overline{e}\}\times X_U)$, then $\mu'(x)\neq \mu'(x')$. The morphism $\mu'$ restricts to a surjection $G.(\{\overline{e}\}\times X_U)\rightarrow Y_U$, furthermore, the algebraic quotient $\pi'$ is surjective and there exists a morphism $\tilde{\mu'}: (G\quot U\times X)\quot G\rightarrow Y$, such that $\tilde{\mu'}\circ \pi'=\mu'$. Then $Y_U\subseteq \im(\tilde{\mu'})$ and, since each fibre of $\pi'$  contains exactly one closed $G$-orbit, we have shown that generically each fibre of $\mu'$ contains a unique   closed orbit. \\[1ex]
Thus, Theorem \ref{criterion} yields that $\pi\colon G\quot U\times X\rightarrow Y$ is an algebraic $G$-quotient. Since $f_{i}$ and $g_{i}$ correspond to each other via the isomorphism $ \rho^*\colon (K[G]^U\otimes K[X])^G \rightarrow K[X]^U,$ the morphism $\mu\colon X\rightarrow Y$ is an algebraic $U$-quotient of $X$.
\end{proof}
We are now able to give explicit descriptions of algebraic $U$-quotients of the nilpotent cone in case $n$ equals $2$ or $3$.
\begin{example}\label{Utwo}
 We consider $\N=\N_2$. In this case, the $U$-normal form of Section \ref{gnfsect} is given by matrices  
 \[H_{x}\coloneqq\left( \begin{array}{ll}
0 &0  \\ 
x & 0
\end{array}\right)\]
 where $x\in K^*$. Then by Proposition \ref{semiprop}, we define the $U$-invariant $f_{2,1}$ by $f_{2,1}(N)=N_{2,1}$ for $N=(N_{i,j})_{i,j}\in\N$.\\[1ex]
 The morphism 
\[\mu\colon \N\rightarrow \textbf{A}^1= \Spec K[f_{2,1}];~N~\mapsto f_{2,1}(N)\]
is an algebraic $U$-quotient of $\N$:\\[1ex]
Clearly, the variety $\textbf{A}^1$ is normal and $\mu$ separates the $U$-orbits in the open subset $\N_U\subseteq \N$.
 Since $\mu$ is surjective, Theorem \ref{critU} yields the claim. We have, therefore, proven 
\[K[\N]^U=K[f_{2,1}].\]
\end{example}
The case $n=3$ is slightly more complex, but can still be handled by making use of Theorem \ref{critU}.
\begin{example}\label{Uthree}
 In  case $\N=\N_3$,  the $U$-normal forms are given by matrices \[H=\left( \begin{array}{lll}
0&0 &0  \\ 
x_1&0 & 0\\
x& x_2& 0
\end{array}\right),~x_1,x_2\in K^*.\] Following Proposition \ref{semiprop}, we define certain $U$-invariants; consider $N=(N_{i,j})_{i,j}\in \N$, then $f_{3,1}(N)= N_{3,1}$, ${\det}_1(N)= N_{2,1}N_{3,2}-N_{2,2}N_{3,1}$ and ${\det}_2(N) =N_{1,1}N_{3,1}+N_{2,1}N_{3,2}+N_{3,1}N_{3,3}$. Note that the equality $\Det_1(N)=\Det_2(N)$ holds true for all $N\in\N$ due to the nilpotency conditions.\\[1ex]
Furthermore, we define a $U$-invariant $f_1$ given by the datum $\Pa=((2),(1,1), (x,x^2))$, thus, 
 $f_{1}(N)= N_{2,1}\cdot {\det}_1+N_{3,1}\cdot(N_{2,1}N_{3,3}-N_{3,1}N_{2,3})$.\\[1ex]
And the $U$-invariant $f_2$ given by the datum $\Pa=((1,1),(2),(x^2,x)$, thus,
 $f_{2}(N) = N_{3,2}\cdot {\det}_1+N_{3,1}\cdot(N_{1,1}N_{3,2}-N_{1,2}N_{3,1})$. Then $f_1\cdot f_2=\Det_1^3$ holds true in $\N$.\\[1ex]
\textbf{Claim:}
 The morphism 
 \begin{align}
 \mu\colon& ~\N\rightarrow \textbf{A}^1\times \Spec \dfrac{K[X_1,X_2,Z]}{\left( X_1X_2=Z^3\right)}=: Y\nonumber\\
&~N~\mapsto (f_{3,1}(N), f_1(N),f_2(N), {\det}_1(N)) \nonumber
\end{align}
is an algebraic $U$-quotient of $\N$.\\[2ex]
The affine variety $Y$ is normal as the product of $\textbf{A}^1$ and a normal toric affine variety induced by the strongly convex rational polyhedral cone \[\sigma\coloneqq \Cone\left(
\left(\begin{array}{l}
1 \\ 
1     \end{array}\right),\left(\begin{array}{l}
1 \\ 
2     \end{array}\right),\left(\begin{array}{l}
2 \\ 
1     \end{array}\right)
 \right). \] 
The morphism $\mu$ separates the $U$-orbits in the open subset $\N_U\subseteq \N$ as can be proved by a direct calculation.\\[1ex]
Furthermore, $\codim_{Y}(\overline{Y\backslash \mu(\N)})\geq 2$, since $\textbf{A}^1\times  X'\subset \mu(\N)$ and $(s,t,u,v)\in \mu(\N)$ whenever either $s$, $t$ or $u$ equals zero and $v^3=ut$.
Theorem \ref{critU} yields the claim.\\[1ex]
We have proved 
\[K[\N]^U=\dfrac{K[f_{3,1}, f_1, f_2, {\det}_1]}{\left( f_1\cdot f_2= {\det}_1^3\right)}.\]
\end{example}
\subsection{Toric invariants}\label{toricsect}
As the case $n=3$ suggests, there is a toric variety closely related to $\N\quot U$.\\[1ex]
 By considering a special type of $U$-invariants, so-called toric invariants, we define a toric variety $X$ together with a dominant morphism
$ \N\quot U\rightarrow X$, such that the generic fibres are affine spaces of the same dimension.\\[1ex]
Given a matrix $H=(x_{i,j})_{i,j}\in \Ha_U$, we denote $x_i\coloneqq x_{i+1,i}$ and define its \textit{toric part} $H_{\tor}\in K^{n\times n}$ by 
\[(H_{\tor})_{i,j}\coloneqq\left\lbrace
\begin{array}{ll}
x_{i}, & \textrm{if}~i=j+1; \\ 
0, & \textrm{otherwise}.
\end{array}\right. \] 
Let $f\neq 0$ be a invariant, given by the data \[\Pa=((a_i)_{1\leq i\leq s},(a'_j)_{1\leq j\leq t},(\Pa_{i,j})_{\begin{subarray}{l}
1\leq i\leq s \\ 
1\leq j\leq t \end{subarray}}).\]
We call $f$ \textit{toric} if $f(H)=f(H_{\tor})$ for every matrix $H\in \Ha_U$ and \textit{sum-free} if its block sizes  $a_{1},\punkte, a_{s}$ and $a'_{1}, \punkte , a'_{t}$ do not share any partial sums, that is, $\sum_{i\in I} a_i \neq \sum_{i'\in I'}a'_{i'}$ for all $I\subsetneq \{1,\punkte,s\}$ and $I'\subsetneq \{1,\punkte,t\}$.
\begin{lemma}\label{secred}
The toric invariants are generated by the sum-free toric invariants.
\end{lemma}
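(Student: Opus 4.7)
I would argue by induction on the pair $(s+t,\,r)$ where $r=\sum_i a_i=\sum_j a'_j$, the base case $s+t\le 2$ being trivially sum-free. Given a toric $f=f^{\Pa}$ with data $\Pa=((a_i)_{i=1}^s,(a'_j)_{j=1}^t,(\Pa_{i,j})_{i,j})$ which is not sum-free, I would choose proper non-empty subsets $I\subsetneq\{1,\dots,s\}$ and $I'\subsetneq\{1,\dots,t\}$ witnessing $\sum_{i\in I}a_i=\sum_{j\in I'}a'_j=:r'$, picked so that $|I|+|I'|$ is minimal. The goal is to exhibit $f$ as a product $\pm f^{\Pa_1}\cdot f^{\Pa_2}$, where $\Pa_1$ is the restriction of $\Pa$ to $(I,I')$ and $\Pa_2$ its restriction to the complementary indices, and then apply the inductive hypothesis to each factor (since each has strictly smaller $s+t$, and each is itself toric, being a factor of a toric invariant determined by $H_{\tor}$).

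\textbf{Carrying out the factorization.} Since $f$ is toric, it is determined by the polynomial $f(H_{\tor})\in K[x_1,\dots,x_{n-1}]$, so I would replace $N$ by $H_{\tor}$ throughout. Using that $(H_{\tor})^k$ is supported on the $k$-th subdiagonal with entry $x_q\cdots x_{p-1}$ at $(p,q)$, each entry of the block $\Pa_{i,j}(H_{\tor})_{(a_i,a'_j)}$ is either zero or $\lambda^{(i,j)}_{n-a_i+p'-q'}\cdot x_{q'}x_{q'+1}\cdots x_{n-a_i+p'-1}$, a squarefree monomial of a controlled support window. I would then permute the row blocks (pushing those indexed by $I$ to the top) and the column blocks (those in $I'$ to the left), writing the resulting $r\times r$ matrix as $\bigl(\begin{smallmatrix}A&B\\C&D\end{smallmatrix}\bigr)$ with $A$ of size $r'\times r'$. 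The aim is to apply Laplace expansion along the first $r'$ rows
\[
\det(H_{\tor}^{\Pa})~=~\sum_{|J|=r'}\pm\,\det(H_{\tor}^{\Pa})_{I,J}\cdot\det(H_{\tor}^{\Pa})_{\bar I,\bar J}
\]
and to show that only the term $J=I'$ survives, yielding $\det(H_{\tor}^{\Pa})=\pm\det(A)\det(D)=\pm f^{\Pa_1}(H_{\tor})\cdot f^{\Pa_2}(H_{\tor})$. Since both sides are toric invariants and agree on $H_{\tor}$, they agree on all of $\N$.

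\textbf{Main obstacle.} The delicate step is vanishing of every $J\neq I'$ in the Laplace expansion: a priori the off-block minors are non-zero, and the block form alone does not force the determinant to split. My plan is to track monomial supports. For a permutation $\sigma$ contributing to $\det(H_{\tor}^{\Pa})$, the $x_k$-multidegree is determined by the intervals $[q'(\sigma(\ell)),\,n-a_i(\ell)+p'(\ell)-1]$, and the minimality of $(I,I')$ forces that every set of rows summing to $r'$ with a valid matching to columns of total size $r'$ must match column-blocks to row-blocks in the way prescribed by $(I,I')$; otherwise one obtains a proper sub-partial-sum equality contradicting the minimal choice. This is the decisive combinatorial input, and it may equivalently be extracted from the representation-theoretic translation of Theorem \ref{genersemi}: write $f^{\Pa}=f_\phi$ for a morphism $\phi$ in $\add\Q$, and show that the partial-sum equality forces a direct sum decomposition $\phi=\phi_1\oplus\phi_2$, whence $f_\phi=f_{\phi_1}\cdot f_{\phi_2}$. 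I expect the monomial-support bookkeeping (or the equivalent indecomposability argument) to be the principal technical hurdle, while the induction and the toricity of the sub-data $\Pa_1,\Pa_2$ will then follow routinely.
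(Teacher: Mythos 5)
Your reduction hinges on the claim that, after permuting blocks, $\det(H_{\tor}^{\Pa})=\pm\det(A)\det(D)=\pm f^{\Pa_1}(H_{\tor})\cdot f^{\Pa_2}(H_{\tor})$, i.e.\ that only the term $J=I'$ survives in the Laplace expansion, where $\Pa_1,\Pa_2$ are the restrictions of the given datum to $(I,I')$ and to the complementary indices. This is false, and the monomial-support bookkeeping cannot rescue it: the partial-sum condition constrains only the block \emph{sizes}, while the polynomials $\Pa_{i,j}$ are arbitrary, so two blocks in the same block-row can carry exactly the same monomial (with different scalars) and there is no support obstruction to the off-diagonal permutations. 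Concretely, take $n=3$ and $\Pa=((1,1),(1,1),(\Pa_{i,j}))$ with $\Pa_{1,1}=x+2x^2$, $\Pa_{1,2}=\Pa_{2,1}=x+x^2$, $\Pa_{2,2}=x$. Every block is the $1\times 1$ matrix $(\lambda^{i,j}_1N_{3,1}+\lambda^{i,j}_2(N^2)_{3,1})$, and writing $u=N_{3,1}$, $v=(N^2)_{3,1}$ one gets $f^{\Pa}(N)=(u+2v)u-(u+v)^2=-v^2=-{\det}_1(N)^2$, a nonzero toric invariant whose datum is not sum-free ($a_1=a'_1=1$). Here $f^{\Pa_1}(H_{\tor})\cdot f^{\Pa_2}(H_{\tor})=(2x_1x_2)\cdot 0=0\neq -x_1^2x_2^2=f^{\Pa}(H_{\tor})$: it is exactly the off-diagonal Laplace term that survives. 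Moreover the factor $f^{\Pa_1}(N)=N_{3,1}+2(N^2)_{3,1}$ is not toric, so your inductive hypothesis would not apply to it anyway; and the same example defeats the representation-theoretic variant, since the corresponding $\phi$ admits no splitting $\phi_1\oplus\phi_2$ under base change in $\add\Q_n$ even though $f_\phi$ factors in the ring.

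What a correct argument should exploit is that a toric invariant is determined by much less than the data you are trying to preserve. Since $\N$ is irreducible and $U.\Ha_U$ is open, a $U$-invariant is determined by its restriction to $\Ha_U$; for a toric invariant that restriction equals $f(H_{\tor})$, which by the $T$-weight computation (the weight of $f^{\Pa}$ depends only on the block sizes) is a scalar multiple of a \emph{single} monomial $x_1^{h_1}\cdots x_{n-1}^{h_{n-1}}$. Hence $f$ equals, up to a nonzero scalar, \emph{any} product of sum-free toric invariants realizing the same exponent vector $(h_1,\dots,h_{n-1})$ --- and these factors in general cannot be obtained by restricting the original polynomials $\Pa_{i,j}$ (in the example above, $f=-{\det}_1\cdot{\det}_1$ with ${\det}_1$ given by the fresh datum $((1),(1),(x^2))$). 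The lemma thereby reduces to a combinatorial statement about the semigroup of realizable exponent vectors, and this is where the double induction on $s$ and $t$ from the cited thesis actually lives: given a non-sum-free datum one produces smaller sum-free data, with newly chosen polynomial entries, whose exponents add up to $h$. Your outer induction is reasonable in spirit, but the determinant-splitting step must be replaced by this exponent argument.
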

The proof is provided by double induction on the integers $s$ and $t$ and can be found in \cite[Lemma 6.2.3]{Bo1}.\\[1ex]
We  denote the subring of $K[\N]^U$ which is generated by all toric invariants by $K[\N]^U_{\tor}$. 
Corresponding to $K[\N]^U_{\tor}$, there is a variety $X\coloneqq \Spec K[\N]^U_{\tor}$ which is a toric variety.
Given a sum-free toric invariant, there are integers $h_1,\punkte,h_{n-1}$, such that \[f(H)=x_1^{h_1}\cdot \punkte\cdot x_{n-1}^{h_{n-1}}.\]
Denote by $S$ the set of tuples $(h_1,\punkte,h_{n-1})\in \textbf{N}^{n-1}$ that arise in this way from a minimal set of generating toric invariants and denote $\sigma\coloneqq \Cone(S)$.\\[1ex]
Let $N$ be the lattice $\textbf{Z}^{n-1}$, then $\sigma$ is generated by the finite set $S\subset \textbf{Z}^{n-1}$ and fulfills $\sigma \cap (-\sigma)=\{0\}$, therefore, $\sigma$ as well as $\sigma^\vee$ are strongly convex rational polyhedral cones of maximal dimension. The variety $X= \Spec K[\N]^U_{\tor}\cong\Spec K[S_{\sigma^{\vee}}]$, thus, is a normal toric variety by Lemma \ref{toricco}.\\[1ex]
Let $T\subset \GL_n$ be the torus of diagonal matrices. There is a natural action $\tau$ of $T$ on the $U$-invariant ring of $\N$ as follows: 
\[\tau\colon ~ T\times K[\N]^U \rightarrow K[\N]^U;~ (t,f)~  \mapsto \left( \begin{array}{ll}
f\colon & \N \rightarrow K \\ 
 & N\mapsto f(tNt^{-1})
                 \end{array}\right).\]
Another operation is given, since the variety $X=\Spec K[\N]^U_{\tor}$ is a toric variety:
\begin{align}
\tau'\colon &~ (K^*)^{n-1}\times K[\N]^U_{\tor} \rightarrow K[\N]^U_{\tor}.\nonumber
\end{align} 
Let $f$ be a toric invariant, such that $f(H)=x_1^{h_1}\punkte x_{n-1}^{h_{n-1}}$, and let $c\coloneqq(c_1,\punkte,c_{n-1})\in(K^*)^{n-1}$. Then
$\tau'(c,f)(H)= f(H)\cdot c_1^{h_1}\punkte c_{n-1}^{h_{n-1}}.$\\[1ex]
The  operation $\tau$ is induced by the operation $\tau'$ via the morphism
\[\rho\colon ~  T \rightarrow (K^*)^{n-1};~ (t_1,\punkte,t_n)\mapsto (t_{2}/t_1,\punkte,t_{n}/t_{n-1}).\]
\begin{lemma}
 Let  $f$ be a sum-free toric invariant of block sizes $\underline{a}\coloneqq(a_1,\punkte,a_s)$ and $\underline{a}'\coloneqq(a'_1,\punkte,a'_t)$ and let $f(H)= x_1^{h_1} \punkte x_{n-1}^{h_{n-1}} $. Then $h_{n-1}=s$ and for $l\in \{1,\punkte,n-2\}$:
 \[h_l= t+\sum_{k=2}^l \sharp\{j\in\{1,\punkte,t\}\mid a'_j\geq k\} -\sum_{k=1}^{l-1} \sharp\{i\in\{1,\punkte,s\}\mid a_i\geq n-k\}\]
\end{lemma}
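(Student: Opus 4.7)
My plan is to compute $f(H_{\tor}) = \det(H_{\tor}^{\Pa})$ directly by expanding the determinant and tracking the exponent of each $x_l$. The starting observation is that $(H_{\tor}^k)_{r,c}$ equals $x_c x_{c+1}\cdots x_{r-1}$ when $r = c+k$ and vanishes otherwise, so every non-zero entry of the block $\Pa_{i,j}(H_{\tor})_{(a_i,a'_j)}$ at actual row $r\in\{n-a_i+1,\ldots,n\}$ and actual column $c\in\{1,\ldots,a'_j\}$ is a scalar multiple of $x_c x_{c+1}\cdots x_{r-1}$ and is non-zero only if $r \geq c$.

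The key step is to show the Leibniz expansion collapses to a single monomial: for any permutation $\sigma$ contributing a non-vanishing summand, the exponent of $x_l$ equals the number of paired $(r,c)$ with $c \leq l < r$. Because non-vanishing forces $r \geq c$, every row of actual index $\leq l$ must be matched to a column of actual index $\leq l$; writing $|A|_l$ and $|B|_l$ for the numbers of row- and column-positions of actual index $\leq l$, this fills $|A|_l$ of the $|B|_l$ small-index columns, and the remaining $|B|_l - |A|_l$ of them are matched to rows of actual index $> l$, giving exactly $h_l = |B|_l - |A|_l$ independently of $\sigma$. Since $f \neq 0$, the signed sum of coefficients in front of this monomial is non-zero, so $f(H_{\tor})$ really is a non-zero monomial with these exponents.

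The remainder is index bookkeeping. Rewriting as indicator sums and swapping the order of summation yields
\[|B|_l = t + \sum_{k=2}^l\sharp\{j\mid a'_j\geq k\}, \qquad |A|_l = \sharp\{i\mid a_i=n\} + \sum_{k=1}^{l-1}\sharp\{i\mid a_i\geq n-k\},\]
and the stated formula follows once the term $\sharp\{i\mid a_i = n\}$ is killed. This is where the sum-freeness hypothesis enters: together with Lemma \ref{secred} it reduces every non-trivial sum-free toric invariant to one with $a_i, a'_j \leq n-1$, eliminating that term. For $l = n-1$ the two remaining sums telescope to $r - t$ and $r - s$ respectively (with $r = \sum a_i = \sum a'_j$), giving $h_{n-1} = t + (r-t) - (r-s) = s$. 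The main obstacle is the second paragraph (proving that every non-vanishing summand contributes the same monomial via the lower-triangularity of $H_{\tor}$), together with the sum-freeness reduction to $a_i, a'_j < n$; the remaining manipulations are routine index accounting.
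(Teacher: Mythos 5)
The paper states this lemma without any proof, so there is nothing to compare your argument against; judged on its own terms, your core computation is correct and is surely the intended one. On $H_{\tor}$ the $(r,c)$ entry of $\Pa_{i,j}(H_{\tor})$ is a scalar multiple of $x_c x_{c+1}\cdots x_{r-1}$ and vanishes for $r<c$, every non-vanishing Leibniz term therefore contributes the same monomial with $h_l=|B|_l-|A|_l$, and your identities $|B|_l = t + \sum_{k=2}^l\sharp\{j\mid a'_j\geq k\}$ and $|A|_l = \sharp\{i\mid a_i=n\}+\sum_{k=1}^{l-1}\sharp\{i\mid a_i\geq n-k\}$, as well as the telescoping for $l=n-1$, all check out. (The non-vanishing of the total coefficient needs the small remark that $\N_U$ is dense in the irreducible variety $\N$, so a non-zero $U$-invariant cannot vanish identically on $\Ha_U$; this is minor.)

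The genuine gap is the last step, where you must discard the term $\sharp\{i\mid a_i=n\}$ from $|A|_l$ (and, symmetrically, a term $\sharp\{j\mid a'_j=n\}$ from $h_{n-1}$). You attribute this to ``sum-freeness together with Lemma \ref{secred}'', but Lemma \ref{secred} only asserts that the toric invariants are \emph{generated} by the sum-free ones; it says nothing about block sizes, and sum-freeness as defined does not exclude a block of size $n$. Concretely, for $n=3$ the datum $\Pa=((3),(1,2),(1,x))$ is sum-free and yields the non-zero toric invariant $f(N)=N_{2,1}N_{3,2}-N_{2,2}N_{3,1}$ with $f(H_{\tor})=x_1x_2$, i.e.\ $h_1=1$, whereas the stated formula for these block sizes predicts $h_1=t=2$. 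So the formula is sensitive to the chosen presentation, and your own general formula $h_l=|B|_l-|A|_l$ shows the stated one holds exactly when all $a_i,a'_j\leq n-1$. Either this must be taken as an implicit hypothesis of the lemma (it does hold for every generator the paper actually uses), or you need a real argument that every sum-free toric invariant admits, and is understood via, a presentation with all block sizes strictly less than $n$; as written, your reduction does not close this.
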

Let $i\in\{1,\punkte,n-1\}$, then we define the $U$-invariant $\det_{i}(N):=\Det(N^{n-i}_{(i,i)})$ and the $U$-invariant $f_i$ to be the unique toric invariant of block sizes $(i),(1,\punkte,1)$. 
 Furthermore, for integers $i,j\in\{1,\punkte,n\}$, such that $j< i-1$, we define the datum \[\Pa=\left((j-1,n-i+1),(j,n-i),\left(\begin{array}{cc}
x^{n-j+1} & 0 \\ 
x & x^i
                         \end{array} \right)\right)\] 
and denote $f_{i,j}\coloneqq f^{\Pa}$. These invariants separate the $U$-orbits generically in $\N_U\subseteq \N$.\\[1ex]
Let $\pi: \N \rightarrow \N\quot U$ be an algebraic $U$-quotient of $\N$ which exists, since $K[\N]^U$ is finitely generated. The variety $\N\quot U$ is normal, since the nilpotent cone is normal (see \cite[III.3.3]{Kr}).\\[1ex]
The space of $U$-normal forms is given by $\Ha_U\cong \textbf{A}^D\times (K^*)^{n-1}$ and the map $\pi$ restricts to a morphism $i:\Ha_U\rightarrow \N\quot U$. We consider the toric variety $X$ described above by its cone $\sigma$ which is induced by the sum-free toric invariants and let $X'\cong (K^*)^{n-1}$ be the dense orbit in $X$.\\[1ex]
The morphism $i: \Ha_U\rightarrow i(\Ha_U)$ is injective, since   the fibres are separated generically by certain $U$-invariants. Therefore, we can construct an explicit morphism $i': i(\Ha_U)\rightarrow \Ha_U$, such that $i\circ i'=\id_{ i(\Ha_U)}$ and $i'\circ i=\id_{ \Ha_U}$. The morphism $i$ is, thus, birational  and $\textbf{A}^D\times (K^*)^{n-1}\cong i(\Ha_U)\subseteq \N\quot U$. 
\begin{lemma}
 The natural embedding $K[\N]^U_{\tor}\rightarrow K[\N]^U$ induces a dominant, $T$-equivariant morphism $p: \N\quot U\rightarrow X,$  such that $p^{-1}(x)\cong \textbf{A}^D$ for each point $x'\in X'$. 
\end{lemma}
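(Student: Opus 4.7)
Since $K[\N]^U_{\tor}$ is a finitely generated $K$-subalgebra of $K[\N]^U$, the inclusion $K[\N]^U_{\tor}\hookrightarrow K[\N]^U$ induces via $\Spec$ a morphism $p\colon\N\quot U\to X$, and dominance is equivalent to injectivity of the inclusion. For $T$-equivariance, it suffices to verify that $\tau$ preserves the toric subring. If $t\in T$ has diagonal entries $t_1,\punkte,t_n$ and $H\in\Ha_U$, then $tHt^{-1}$ is again strictly lower triangular with non-zero subdiagonal entries $(t_{i+1}/t_i)x_i$, so $tHt^{-1}\in\Ha_U$ and its toric part is the diagonal rescaling of $H_{\tor}$. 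Hence for a toric invariant $f$ one computes $(\tau(t,f))(H)=f(tHt^{-1})=f((tHt^{-1})_{\tor})$, showing $\tau(t,f)$ is again toric. The induced $T$-action on $X$ is the pullback of $\tau'$ along $\rho$, so $p$ intertwines the two actions.

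Next I describe the generic fibres via $\Ha_U$. By Lemma \ref{secred} and the explicit formula $f(H)=x_1^{h_1}\punkte x_{n-1}^{h_{n-1}}$ for sum-free toric invariants, the restriction of any toric invariant to $\Ha_U\cong\textbf{A}^D\times(K^*)^{n-1}$ depends only on the $(K^*)^{n-1}$-factor. Hence $p\circ i\colon\Ha_U\to X$ factors as the second projection $\textbf{A}^D\times(K^*)^{n-1}\to(K^*)^{n-1}\cong X'$ followed by the open embedding $X'\hookrightarrow X$. Since $i$ is an open embedding of $\Ha_U$ into $\N\quot U$, this identifies the fibre $(p|_{i(\Ha_U)})^{-1}(x')$ with $\textbf{A}^D$ for each $x'\in X'$.

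To conclude that $p^{-1}(x')=(p|_{i(\Ha_U)})^{-1}(x')$ for every $x'\in X'$, I use $T$-equivariance: $T$ normalises $U$ and preserves $\Ha_U$ by the first paragraph, so the open subset $i(\Ha_U)\subseteq\N\quot U$ is $T$-stable, and surjectivity of $\rho$ implies that $T$ acts transitively on $X'$ via the induced action on $X$. Combined with the fact that the generic fibre of $p$ has dimension $D$, this forces every fibre over $X'$ to have dimension $D$ and to be a $T$-translate of its intersection with the $T$-stable open $i(\Ha_U)$. The principal difficulty is exactly this last identification---showing a priori that $p^{-1}(x')$ does not meet the closed complement of $i(\Ha_U)$ in any positive-dimensional piece---which I would handle by flatness on the open subset $i(\Ha_U)$, where $p$ is visibly the trivial $\textbf{A}^D$-bundle over $X'$, and then use the $T$-transitivity above to propagate the trivial-bundle structure to the whole of $p^{-1}(X')$.
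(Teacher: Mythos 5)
Your first two paragraphs (dominance, $T$-equivariance, and the identification of $p^{-1}(x')\cap i(\Ha_U)$ with $\textbf{A}^D$) are fine and agree in substance with what the paper takes for granted. The problem is the third paragraph: you correctly isolate the crucial step --- showing that $p^{-1}(x')$ does not meet the closed complement $Z:=(\N\quot U)\setminus i(\Ha_U)$ --- but the tools you propose do not close it. The complement $Z$ is itself $T$-stable, so $T$-transitivity on $X'$ only tells you that either \emph{every} fibre over $X'$ meets $Z$ or none does; it cannot decide which. Likewise, flatness (or the trivial-bundle structure) of $p$ restricted to the open set $i(\Ha_U)$ says nothing about what the full fibre looks like outside that open set; ``propagating the trivial-bundle structure to the whole of $p^{-1}(X')$'' presupposes exactly the containment $p^{-1}(X')\subseteq i(\Ha_U)$ that you are trying to prove. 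As written, the argument is circular at the decisive point.

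The paper resolves this with a different, and essentially unavoidable, observation: the determinantal invariants $\det_i$, $i\in\{1,\punkte,n-1\}$, which by Corollary \ref{hnf} cut out the generic locus $\N_U$ inside $\N$, are themselves \emph{toric} invariants, hence are pulled back from $X$ along $p$. Since none of them vanishes at a point $x'\in X'$, none of them vanishes anywhere on $p^{-1}(x')$, and the generic normal form of Section \ref{gnfsect} then forces $p^{-1}(x')\subseteq i(\Ha_U)$. From there your second paragraph finishes the proof. To repair your argument you would need to add precisely this fact --- that the defining equations of the complement of $i(\Ha_U)$ (the vanishing of the $\det_i$) already live in the toric subring $K[\N]^U_{\tor}$ --- which is the one idea your proposal is missing.
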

\begin{proof}
The morphism $p$ is clearly dominant and $T$-equivariant due to our considerations above. \\[1ex]
Let $x'\in X'$, then $p^{-1}(x)\subseteq i(\Ha_U)$, since every determinant ${\det}_i$ for $i\in\{1,\punkte,n-1\}$ is a toric invariant. If $x'\in X'$, none of these determinants vanishes on $x'$ and Section \ref{gnfsect}, therefore, yields $p^{-1}(x')\subseteq i(\Ha_U)$. Since the orbits in $\N_U$ are separated by certain $U$-invariants  and since $\Ha_U\cong \textbf{A}^D\times X'$, the claim  $p^{-1}(x)\cong \textbf{A}^D$ follows.
\end{proof}
 There is a morphism $q: \N\quot U\rightarrow \textbf{A}^D$ as well, such that the composition
\[\Ha_U\xrightarrow{i} \N\quot U\xrightarrow{q} \textbf{A}^D\]
yields $q\circ i(H)= (x_{i,j})_{1< j+1\leq i-1< n}\in \textbf{A}^D$.

\begin{lemma}
 The morphism
\[(q,p): \N\quot U\rightarrow \textbf{A}^D\times X\]
is dominant and birational.
\end{lemma}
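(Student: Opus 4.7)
The plan is to reduce the claim to a statement about the composed morphism
$(q,p)\circ i\colon \Ha_U\to \textbf{A}^D\times X$, exploiting that by the previous discussion $i\colon \Ha_U\to \N\quot U$ is birational with image the open subset $i(\Ha_U)\cong \textbf{A}^D\times (K^*)^{n-1}$. Using the product decomposition $\Ha_U\cong \textbf{A}^D\times (K^*)^{n-1}$, where the first factor records the entries $(x_{i,j})_{1<j+1\leq i-1<n}$ and the second factor records the subdiagonal $(x_1,\punkte,x_{n-1})$, the definition of $q$ gives $q\circ i=\mathrm{pr}_1$. On the other hand, since every toric invariant $f$ satisfies $f(H)=f(H_{\tor})$ by definition, the morphism $p\circ i$ factors through the projection $\Ha_U\to \Ha_U^{\tor}\cong (K^*)^{n-1}$ followed by a monomial morphism $\psi\colon(K^*)^{n-1}\to X$. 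Altogether $(q,p)\circ i=\id_{\textbf{A}^D}\times \psi$, so it suffices to show that $\psi$ is dominant and birational.

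For dominance: each sum-free toric invariant is a monomial $x_1^{h_1}\punkte x_{n-1}^{h_{n-1}}$ with non-negative exponents, hence non-vanishing on $(K^*)^{n-1}$. Therefore $\psi$ lands in the dense torus orbit $X'\cong(K^*)^{n-1}$ of the $(n-1)$-dimensional toric variety $X$, and is a monomial map between tori of the same dimension. In particular the image is constructible and contains a non-empty open subset of $X'$, yielding dominance of $\psi$ (hence dominance of $(q,p)\circ i$, hence of $(q,p)$).

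For birationality, the main point is to show that the monomial map $\psi\colon (K^*)^{n-1}\to X'$ is an isomorphism of tori, equivalently, that the induced map of character lattices is invertible over $\textbf{Z}$. The lattice of characters on $X'$ is precisely the group generated by the exponent vectors $(h_1,\punkte,h_{n-1})$ of a generating system of sum-free toric invariants, and the lattice on the source is $\textbf{Z}^{n-1}$. The main obstacle is therefore to exhibit sum-free toric invariants among the generators whose exponent vectors form a unimodular $(n-1)\times(n-1)$ matrix. Using the previous lemma, the invariants $\det_{i}$ (evaluated on $H_{\tor}$) produce explicit monomials whose exponents can be written out directly; by triangularising (in the order $i=n-1,n-2,\punkte,1$) and recording that each new $\det_i$ introduces the variable $x_{n-i}$ for the first time with exponent $1$, the corresponding exponent matrix is upper-triangular with ones on the diagonal, hence unimodular. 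This shows that $\psi$ induces an isomorphism of character lattices, so $\psi$ is birational (in fact an isomorphism onto the open subset $X'\subseteq X$), completing the proof.

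Equivalently, one can avoid the explicit lattice computation by writing down a birational inverse: on the open locus of $X$ where the toric invariants $\det_1,\punkte,\det_{n-1}$ do not vanish, their ratios express the coordinates $x_1,\punkte,x_{n-1}\in K^*$ as rational functions of a point in $X$, while the $\textbf{A}^D$-coordinates reconstruct the remaining entries of the matrix $H\in \Ha_U\subset \N$. Composing with $i$ yields the desired rational inverse $\textbf{A}^D\times X\dashrightarrow \N\quot U$.
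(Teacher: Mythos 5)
Your reduction of the claim to the torus map $\psi\colon (K^*)^{n-1}\rightarrow X'$ via $(q,p)\circ i=\id_{\textbf{A}^D}\times\psi$ is sound and is essentially the paper's closing remark that $(p,q)$ restricts to an isomorphism $i(\Ha_U)\cong\textbf{A}^D\times X'$. The gap is in your verification that $\psi$ is an isomorphism of tori. The exponent matrix of the invariants $\det_i$ on the toric part is \emph{not} triangular with ones on the diagonal: one computes $\det_i(H_{\tor})=\prod_{k=1}^{i}x_kx_{k+1}\cdots x_{k+n-i-1}$, so each $\det_i$ is a palindromic monomial involving \emph{all} of $x_1,\dots,x_{n-1}$; in particular $\det_1(H_{\tor})=\det_{n-1}(H_{\tor})=x_1\cdots x_{n-1}$. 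For $n=3$ all the $\det_i$ coincide and so cannot even separate points of $(K^*)^2$; for $n=4$ the exponent vectors are $(1,1,1),(1,2,1),(1,1,1)$, of rank $2$. Hence the $\det_i$ neither generate the character lattice nor admit the "ratios recover $x_1,\dots,x_{n-1}$" inverse you propose as an alternative. The same defect undercuts your dominance argument: a monomial map between equidimensional tori is dominant only if the exponent vectors span $\textbf{Q}^{n-1}$, which the $\det_i$ alone do not.

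The missing input is exactly what the paper leans on: the full list of invariants $\det_i$, $f_i$ (the toric invariants of block sizes $(i),(1,\dots,1)$) and $f_{i,j}$ separates the $U$-orbits generically in $\N_U$, and the cone $\sigma$ spanned by the exponent vectors of \emph{all} sum-free toric generators is of maximal dimension with $\textbf{Z}$-span equal to $\textbf{Z}^{n-1}$ (compare the case $n=3$, where the relevant exponents are $(2,1)$, $(1,2)$ and $(1,1)$, which do generate $\textbf{Z}^2$ even though the cone is proper). Granting this, the paper's proof is short: $\textbf{A}^D\times X'\subseteq\im(p,q)\subseteq\textbf{A}^D\times X$ gives dominance, and generic injectivity --- each fibre over a point of $\textbf{A}^D\times X'$ is a single point because the $U$-orbits lying over it are separated by the invariants above --- gives birationality. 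To repair your argument, either invoke that separation statement directly, or redo the lattice computation including the $f_i$ (not just the $\det_i$) among the generators.
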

\begin{proof}
 The morphism $(p,q)$ is dominant, since  $\textbf{A}^D\times X'\subseteq \im(p,q)\subseteq \textbf{A}^D\times X$.\\[1ex]
The morphism $(p,q)$ is birational, since $(p,q)$ is dominant and generically injective: the fibre $(p,q)^{-1}(y)$ contains exactly one element for every $y\in \textbf{A}^D\times X'$, since the $U$-orbits can be separated in $\textbf{A}\times X'$. More straight forward, $(p,q)$ restricts to an isomorphism $i(\Ha_U)\cong \textbf{A}^D\times X'$.
\end{proof}
Note that the morphism $(p,q)$ is not surjective for $n\geq 4$. Even in the case $n=4$, we can show $K[\N]^U\ncong K[\textbf{A}^3]\otimes K[\N]^U_{\tor}$ and $\N\quot U\ncong \textbf{A}^3\times X$.\\[1ex]
We define a $U$-invariant $g$ by the data 
\[\Pa=\left\lbrace 
\begin{array}{ll}
((2),(2), (x)), &~\textrm{if}~n=4; \\ 
((n-2),(2,n-4), (x,x^4)) &~\textrm{otherwise}.
\end{array}
\right. \]   
Then 
$g(H)=(x_{3,1}\cdot x_{4,2}-x_2\cdot x_{4,1})\cdot \Det_{n-4}(H)$
and the relation 
\[g\cdot \underbrace{\Det_{n-3}\cdot\Det_1\cdot f_{n-3}\cdot f_{n-1}}_{\coloneqq F} =\underbrace{f_{3,1}\cdot f_{4,2}\cdot f_{n-3} \cdot f_{n-1} - f_{4,1}\cdot f_{n-2}^2 \cdot \Det_{n-3}\cdot \Det_1}_{\coloneqq F'}\]
holds true in $K[\N]^U$. The set 
$M\coloneqq \{\underline{x}\in\textbf{A}^D\times X\mid F(\underline{x})= 0; F'(\underline{x})\neq 0\}$ is non-empty and 
 the inclusion $M \subseteq (\textbf{A}^D\times X)\backslash \im(p,q)$ directly yields that the morphism $(p,q)$ is not surjective.

\bibliography{Literatur}
\end{document}